\documentclass[11pt]{amsart}
\usepackage{amsmath}
\usepackage{amsthm}
\usepackage{mathrsfs}
\usepackage{latexsym}
\usepackage{amssymb}
\usepackage{amsthm}
\usepackage{amscd}
\usepackage[dvips]{graphics}
\usepackage[all,cmtip]{xy}
\usepackage{cancel}
\usepackage{xcolor}
\usepackage[hidelinks]{hyperref}

\usepackage{fourier}

\DeclareSymbolFont{AMSb}{U}{msb}{m}{n}
\DeclareMathSymbol{\N}{\mathbin}{AMSb}{"4E}
\DeclareMathSymbol{\Z}{\mathbin}{AMSb}{"5A}
\DeclareMathSymbol{\R}{\mathbin}{AMSb}{"52}
\DeclareMathSymbol{\Q}{\mathbin}{AMSb}{"51}
\DeclareMathSymbol{\I}{\mathbin}{AMSb}{"49}
\DeclareMathSymbol{\C}{\mathbin}{AMSb}{"43}

\DeclareFontFamily{U}{mathx}{\hyphenchar\font45}
\DeclareFontShape{U}{mathx}{m}{n}{
      <5> <6> <7> <8> <9> <10>
      <10.95> <12> <14.4> <17.28> <20.74> <24.88>
      mathx10
      }{}
\DeclareSymbolFont{mathx}{U}{mathx}{m}{n}
\DeclareFontSubstitution{U}{mathx}{m}{n}
\DeclareMathAccent{\widecheck}{0}{mathx}{"71}
\DeclareMathAccent{\wideparen}{0}{mathx}{"75}

\newcommand{\dbl}{[\hspace{-0.2ex}[}
\newcommand{\dbr}{]\hspace{-0.2ex}]}
\newcommand{\db}[1]{\dbl {#1} \dbr}

\newcommand{\iso}{\cong}
\newcommand{\invlim}{\underleftarrow{\textnormal{lim}}\,}

\newcommand{\Hom}{\textnormal{Hom}}

\newcommand{\GL}{\textnormal{GL}}

\newcommand{\tn}[1]{\textnormal{#1}}

\newcommand{\F}{\mathbb{F}}
\newcommand{\bigO}{\mathcal{O}}

\newcommand{\Tr}{\operatorname{Tr}}
\newcommand{\oTr}{\operatorname{oTr}}
\newcommand{\rank}{\operatorname{rank}}
\newcommand{\Stab}{\operatorname{Stab}}

\newcommand{\Ker}{\operatorname{Ker}}

\newcommand{\Ext}{\operatorname{Ext}}
\newcommand{\PC}{\operatorname{PC}}
\newcommand{\Res}{\operatorname{Res}}

\newcommand{\Aut}{\operatorname{Aut}}

\numberwithin{equation}{section}

\title{Finiteness properties of profinite blocks and blocks with defect group $\Z_p^n$}
\author{Florian Eisele}
\address[Florian Eisele]{Department of Mathematics, University of Manchester, Oxford Road, Manchester  M13 9PL, United Kingdom}
\email{florian.eisele@manchester.ac.uk}
\author{John W. MacQuarrie}
\address[John W. MacQuarrie]{Department of Mathematics, Av. Antônio Carlos, 6627 - Pampulha. Belo Horizonte - MG31270-901, Brazil}
\email{john@mat.ufmg.br}

\begin{document}

\newtheorem{defn}[equation]{Def{i}nition}
\newtheorem{prop}[equation]{Proposition}
\newtheorem{lemma}[equation]{Lemma}
\newtheorem{theorem}[equation]{Theorem}
\newtheorem{corol}[equation]{Corollary}
\newtheorem{question}[equation]{Question}
\newtheorem{notation}[equation]{Notation \& Conventions}

\theoremstyle{definition}
\newtheorem{example}[equation]{Example}

\theoremstyle{definition}
\newtheorem{examples}[equation]{Examples}

\theoremstyle{definition}
\newtheorem{remark}[equation]{Remark}

\begin{abstract}
    We prove that any block of a profinite group with a topologically finitely generated defect group $D$ is isomorphic to a block of a virtually pro-$p$ group, and if $D$ is finitely presented then so is the basic algebra of the block over $k$. Using these general results we then prove that any block with defect group $D=\Z_p^n$ is Morita equivalent to $\bigO_\alpha \db{\Z_p^n\rtimes E}$ for a finite $p'$-group $E$ acting faithfully and a $2$-cocycle $\alpha\in H^2(E,k^\times)$. In particular, Donovan's conjecture holds for such blocks of profinite groups.
\end{abstract}

\maketitle

\section{Introduction}

Let $\bigO$ be a complete discrete valuation ring of characteristic $0$ with residue field $k=\bar k$ of characteristic $p>0$. We are interested in blocks of profinite groups, both over $k$ and over $\bigO$. In general, such blocks are rather complicated pseudocompact algebras. However, just as for blocks of finite groups, we expect \emph{defect groups} to have some control over how complicated the algebra structure of a block is. Blocks of cyclic defect are a particularly striking instance of this phenomenon. Blocks of finite (and profinite) groups with cyclic defect groups are known to be Brauer tree algebras, which are well-understood. Surprisingly, blocks whose defect groups are infinite pro-cyclic, that is, blocks with defect groups isomorphic to $\Z_p$, turned out to have an even simpler structure: they are infinite-dimensional Brauer star algebras \cite{JohnRicardoCyclic}. The motivation for the present paper was to understand \emph{why} blocks with defect group $\Z_p$ have such a simple structure and how this generalises to other pro-$p$ defect groups. There are two concrete questions we set out to answer. Firstly, why are blocks with defect group $\Z_p$ Morita equivalent to a completed path algebra of a \emph{finite} quiver modulo a \emph{finite} number of relations? And secondly, why are their Brauer trees \emph{stars}, mirroring the situation in blocks of $p$-solvable finite groups with cyclic defect groups? 

Our answer to the first question is quite general. It shows that if $D$ is finitely generated, then it suffices to study blocks of virtually pro-$p$ groups instead of general profinite groups, and finite presentability of $D$ translates directly into finite presentability of the corresponding basic algebras of blocks. These results should prove useful in other contexts as well.
\begin{theorem}
    Let $\bigO \db{G}b$ be a block of a profinite group $G$ with defect group $D$.
    \begin{enumerate}
        \item The block $\bigO \db{G}b$ is isomorphic to a block $\bigO \db{H} c$ of a \emph{virtually pro-$p$ group} $H$ if and only if $\bigO \db{G}b$ has finitely many simple modules.
        \item If $D$ is finitely generated as a pro-$p$ group, then $\bigO \db{G}b$ has finitely many simple modules, and a basic algebra of $k\db{G}b$ is isomorphic to $k\db{Q}/I$ for a finite quiver $Q$ and some closed ideal $I\subseteq J^2(k\db{Q})$, where $J(k\db{Q})$ is the Jacobson radical of $k\db{Q}$.
        \item If $D$ is finitely presented as a pro-$p$ group, then a basic algebra of $k\db{G}b$ is isomorphic to $k\db{Q}/I$ for a finite quiver $Q$ and a finitely generated ideal $I\subseteq J^2(k\db{Q})$.
    \end{enumerate}
\end{theorem}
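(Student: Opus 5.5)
The plan is to reduce to finite quotients and pass to inverse limits. Write $G=\invlim G/N$ over open normal subgroups $N$. A block idempotent $b$ of $\bigO\db{G}$ maps to a sum of block idempotents in each $\bigO[G/N]$. For $N$ small enough, the image of $b$ is a single block $b_N$, and these form a compatible system. The defect groups $D_N$ of the $b_N$ can be chosen compatibly, with $D=\invlim D_N$. The simple $k\db{G}b$-modules are exactly the simple $k[G/N]b_N$-modules, taken over sufficiently small $N$, since simple modules are finite-dimensional and so factor through some open normal subgroup.

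\emph{Part (1).} If $H$ is virtually pro-$p$, choose an open normal pro-$p$ subgroup $P$. Then $P$ acts trivially on every simple $k\db{H}$-module, so the simples are modules for the finite group $H/P$, and there are finitely many. Conversely, suppose $\bigO\db{G}b$ has finitely many simples. Choose an open normal $N$ such that all simples factor through $G/N$. Set $M=\bigcap \ker$ of the simples, so $M$ acts trivially on all simples of the block. I would then try to show that $b$ is already a block of $\bigO\db{G/O^p(M)}$ in a suitable sense. More concretely, let $K\le N$ be the largest closed normal subgroup acting trivially on all simple $k\db{G}b$-modules. Then $K$ acts trivially on all simple modules of the block, and a Clifford-theoretic argument (as in the finite case, where a normal $p'$-part acting trivially on simples yields a quotient) should show $O^p(K)$ acts trivially on the whole block. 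That gives $\bigO\db{G}b\cong\bigO\db{G/O^p(K)}\bar b$ with $G/O^p(K)$ virtually pro-$p$, because $K/O^p(K)$ is pro-$p$ and open in $G/O^p(K)$. The key input is the profinite analogue of the following fact: if $L\trianglelefteq G$ is a $p'$-group acting trivially on all simples of $b$, then $b$ lies in $\bigO\db{G/L}$. This holds because $b$ then lies under the trivial block of $L$, and the corresponding central idempotent $e_L$ of $\bigO L$ is the augmentation idempotent.

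\emph{Part (2).} For finite groups, the number of simple modules of a block is bounded in terms of the defect group. The inverse system $b_N$ has $l(b_N)$ nondecreasing, so I need a uniform bound. I would use $l(b_N)\le k(b_N)$ together with the fact that, for $N$ large, Brauer correspondence and Fong reductions reduce to the normalizer $N_{G/N}(D_N)$. The main obstacle is controlling the count uniformly when only topological finite generation of $D$ is known, since the $D_N$ grow. My first attempt: since $D$ is finitely generated, $D$ has an open characteristic subgroup $\Phi$-series, and eventually the inertial quotient $E_N=N(D_N,e_N)/D_NC(D_N)$ stabilises. This holds because $\Aut(D)$ modulo its pro-$p$ kernel is finite ($D$ finitely generated implies $\Aut(D)\to\GL(D/\Phi(D))$ has pro-$p$ kernel). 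Then $l(b_N)$ can be bounded through the Brauer correspondent and the well-known bound by $|E|$-related quantities, in the spirit of the $\Z_p$ case. Once the simples are finite in number, the basic algebra is $k\db{Q}/I$ with $Q$ the Ext-quiver. The arrow count equals $\dim\Ext^1(S,T)$, which is bounded by $\dim H^1(G, \Hom(S,T))$. By (1) we may take $G$ virtually pro-$p$ with open pro-$p$ $P\le D$-conjugates, so $\dim H^1(P,-)$ is finite because $P$ is finitely generated: $P$ is open in a finitely generated virtually pro-$p$ group, hence finitely generated. Hence $Q$ is finite and $I\subseteq J^2$ is standard.

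\emph{Part (3).} Relations are measured by $\Ext^2$ between simples, via the usual minimal presentation of a pseudocompact basic algebra. So it suffices to show $\dim\Ext^2_{k\db{H}c}(S,T)<\infty$. By Eckmann--Shapiro and Lyndon--Hochschild--Serre with the finite $p'$-quotient $H/P$ (exact invariants), this reduces to $H^2(P,\Hom(S,T))$. That is finite since $P$, being open in a group containing the finitely presented $D$ with appropriate structure, is finitely presented. The delicate point here is transferring finite presentability from $D$ to $P$, since $P$ need not lie in $D$. I would instead choose $H$ in (1) so that its open pro-$p$ subgroup is a defect group, and use that open subgroups of finitely presented pro-$p$ groups are finitely presented.
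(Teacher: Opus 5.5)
\textbf{Gap in part (2): finiteness of simple modules.} The uniform bound on $l(b_N)$ that you invoke does not exist. Brauer's first main theorem matches blocks, not their numbers of simple modules: the principal $2$-block of $S_4$ has two simple modules, while its Brauer correspondent $kD_8$ has one. No bound on $l(B)$ in terms of the inertial quotient is known for blocks of finite groups in general; for abelian defect groups it would follow from Alperin's weight conjecture. The unconditional bounds, such as $l(B)\le k(B)$ combined with Brauer--Feit, grow with $|D_N|$, which is unbounded once $D$ is infinite. So stabilisation of $E_N$ buys you nothing.

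The missing idea is group-theoretic rather than block-theoretic. $D$ is open in a Sylow $p$-subgroup $S$, so $S$ is finitely generated, and by a theorem of Reid $G/O_{p'}(G)$ is then virtually pro-$p$. Take $M\unlhd_O G$ with $\varphi_M(b)\neq 0$ and set $N=O_{p'}(G)\cap M$. The block lies over the principal block of the pro-$p'$ group $N$, so $\varphi_N$ maps $\bigO\db{G}b$ isomorphically onto a block of the virtually pro-$p$ group $G/N$, with defect group $DN/N\cong D$. The easy direction of (1) then gives finitely many simples.

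The same reduction justifies your later, unproved claim that the relevant virtually pro-$p$ group has finitely generated (resp.\ presented) open pro-$p$ subgroups. Indeed, $O_p(G/N)$ is open and contained in every defect group, hence an open subgroup of $D$. So in (3) you do not need to choose $H$ specially. Note also that $H/P$ need not be a $p'$-group. The Lyndon--Hochschild--Serre argument still gives finite dimensionality with a finite (not $p'$) quotient; alternatively, restrict to a Sylow subgroup, where restriction on $\Ext$ is injective, as the paper does.

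\textbf{Gap in part (1).} Your conclusion that $O^p(K)$ acts trivially on $\bigO\db{G}b$ is correct, but your justification does not reach it. The fact you cite concerns a normal subgroup already known to be pro-$p'$, and you have no a priori reason for $O^p(K)$ to be pro-$p'$. So the ``Clifford-theoretic argument'' is precisely the missing step. What works, and what the paper does, is an argument for single pro-$p'$ elements, where normality plays no role:
\begin{enumerate}
\item If $g\in K$ is pro-$p'$, then $\overline{\langle g\rangle}$ acts on each finite-length module of the block through a finite $p'$-group, trivially on its composition factors.
\item By induction on length, such a module lies in the principal block of that $p'$-group, on which the group acts trivially.
\item Hence $g$ acts trivially on every finite-length module of the block, and therefore on $\bigO\db{G}b$.
\item The closed normal subgroup $\{x\in G \mid xb=b\}$ thus contains every pro-$p'$ element of $K$, hence $O^p(K)$.
\end{enumerate}

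\textbf{A false claim in your setup.} You assert that $\varphi_N(b)$ is a single block idempotent for all sufficiently small $N$. This is false for general profinite $G$; the paper proves it only for virtually pro-$p$ groups. You must instead work with compatible systems $(b_N)$ satisfying $\varphi_N(b)b_N=b_N$.
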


Our answer to the second question relies on reduction theorems for Donovan's conjecture from the modular representation theory of finite groups. We show that these reduction results generalise to profinite groups without any restrictions on the defect group. For blocks with defect groups isomorphic to $\Z_p^n$, the reductions show that any such block is Morita equivalent to a block with normal defect group. The reductions work much more cleanly than analogous reductions in the finite case (the classification of finite simple groups is not needed at all), among other things because a finite normal subgroup cannot intersect the defect group $\Z_p^n$ non-trivially. See Lemma~\ref{lemma:group-theory-caseAB} for the group-theoretic fact we use. Our result is the following, which completely classifies the blocks in question:

\begin{theorem}[see Theorem~\ref{thm:reduction}]
    Let $\bigO \db{G}b$ be a block of a profinite group  with defect group isomorphic to $\Z_p^n$ for some $n\in \N$. Then 
    $$
        \bigO\db{G}b \sim_{\rm Morita} \bigO_\alpha\db{\Z_p^n\rtimes E}
    $$
    for a finite $p'$-group $E$ acting faithfully on $\Z_p^n$ and some $2$-cocycle $\alpha\in H^2(E,k^\times)$.
\end{theorem}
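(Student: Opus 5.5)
Our plan is to reduce, via Morita-equivalence-preserving steps, to a block of a virtually pro-$p$ group with normal defect group, and then apply a profinite version of Külshammer's structure theorem for blocks with normal defect group. First, since $D\cong\Z_p^n$ is finitely generated, part (1)–(2) of the first theorem shows that $\bigO\db{G}b$ has finitely many simple modules and is isomorphic to a block $\bigO\db{H}c$ of a virtually pro-$p$ group $H$. So we may assume $G$ has an open normal pro-$p$ subgroup. We proceed by induction on a suitable finite invariant, say the index $|G:O_p(G)|$, or more robustly on $|G:P|$ for an open normal pro-$p$ subgroup $P$, arguing that at each stage the data remain finite.

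Next we run the profinite analogue of the Fong–Reynolds and Külshammer–Puig/Fong reductions. Let $N$ be a normal subgroup of $G$ supplied by Lemma~\ref{lemma:group-theory-caseAB}; the key group-theoretic point is that a finite normal subgroup meets $D\cong\Z_p^n$ trivially (since $\Z_p^n$ is torsion-free), so every finite normal $p$-subgroup is killed by any block with this defect group and cannot occur in the defect group, and the relevant normal subgroups are either open pro-$p$ or of $p'$-order modulo such. Case A: the block $b$ covers a block $e$ of $N$ that is not $G$-stable. Then Fong–Reynolds, which transfers verbatim to profinite groups (Clifford theory over $\bigO\db{N}$ with $N$ closed normal and stabiliser open of finite index), gives a Morita equivalence with a block of the stabiliser $T<G$ with the same defect group, and induction applies. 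Case B: $e$ is $G$-stable and $N$ is a $p'$-group (finite modulo the kernel). Then $\bigO\db{N}e$ is a matrix algebra over $\bigO$, and the Külshammer–Fong reduction yields a Morita equivalence of $\bigO\db{G}b$ with a block of a twisted group algebra $\bigO_\beta\db{G/N}$, and hence (via a central extension by a finite cyclic $p'$-group) with a block of a smaller group. Iterating, we reach a block in which $O_{p'}(G)$ is central and all non-central normal subgroups have non-trivial pro-$p$ part; the group-theoretic lemma should then force $D$ to be normal in $G$, using that $D$ is the unique maximal closed normal subgroup of type $\Z_p^n$ contained in the open pro-$p$ subgroup $O_p(G)$, and that $C_G(D)$ controls the rest.

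Finally, with $D\trianglelefteq G$, we prove the profinite Külshammer theorem: $\bigO\db{G}b\sim_{\rm Morita}\bigO_\alpha\db{D\rtimes E}$ with $E$ a $p'$-subgroup of the inertial quotient $N_G(D,e)/DC_G(D)$. This is obtained by writing $G$ as an inverse limit of finite quotients $G/U$ with $U\le D$ open and normal, applying Külshammer's theorem at each finite level compatibly, and passing to the limit, using that $E$ and $\alpha$ stabilise for small $U$. Faithfulness of the action of $E$ on $\Z_p^n$ follows since $C_G(D)$ has been absorbed. The main obstacle, I expect, is the compatibility in this limit argument: the Morita equivalences at finite levels must be chosen coherently (e.g. through a fixed source idempotent in the inverse system, chosen by a compactness argument on nonempty finite sets of choices) so that they assemble into an equivalence of pseudocompact algebras; I would first try to realise everything via a single idempotent $f\in\bigO\db{G}b$ whose images give source algebras at each finite level.
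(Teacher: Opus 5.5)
Your reduction to a normal defect group has a genuine gap. When the covered block is $G$-stable, your only move requires $N$ to be a $p'$-group. This rests on the claim that the relevant normal subgroups are ``either open pro-$p$ or of $p'$-order modulo such'', and that claim is false. A finite normal subgroup $N$ automatically meets every conjugate of $D\cong\Z_p^n$ trivially, so the block of $N$ covered by $b$ has trivial defect (Proposition~\ref{prop:defectgroupintersection}). But $p$ may well divide $|N|$, and such subgroups genuinely obstruct normality of $D$.

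For instance, let $p=2$ and $G=A_5\rtimes\Z_2$, where a topological generator $t$ of $\Z_2$ acts on $A_5$ as a transposition. Let $b$ cover the ($G$-stable) defect-zero block of $A_5$ with character of degree $4$. Here $O_{2'}(G)=1$, and every block of every closed normal subgroup of $G$ is $G$-stable, so neither of your moves applies and your iteration halts at once. However, $\bigO\db{G}b\cong M_4(\bigO)\otimes_{\bigO}\bigO\db{\Z_2}$, and its defect group $D\cong\Z_2$ satisfies $D\cap A_5=1$ and $DA_5=G$. Hence $D$ contains elements acting on $A_5$ as odd permutations, so it does not centralise $A_5$ and cannot be normal.

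Your sketched reason for normality is also circular: $O_p(G)\subseteq D$ always holds, and what has to be shown is equality. The missing idea is Lemma~\ref{lemma:group-theory-caseAB}. For $H=C_G(O_p(G))$, the transfer $v:H\to O_p(G)$, $g\mapsto g^{[H:O_p(G)]}$, is a homomorphism. Its kernel $N$ is a finite normal subgroup of $G$ meeting all $D^g$ trivially and containing $O_{p'}(Z(G))$. If $N\supsetneqq O_{p'}(Z(G))$, you make the trivial-defect block of $N$ stable by Fong reduction. You then apply Fong--Reynolds to a stable trivial-defect block of an \emph{arbitrary} finite normal subgroup (Theorem~\ref{thm:fong-reynolds}, Corollary~\ref{corol:fong-reynolds}). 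If instead $N=O_{p'}(Z(G))$, the Sylow $p$-subgroup of $H$ is normal in $G$, which forces $D=O_p(G)\unlhd G$ and $C_G(D)=D\times O_{p'}(Z(G))$.

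Two further points need repair. First, your invariant $|G:O_p(G)|$ only drops under Fong--Reynolds if the central $p'$-group $Z$ of the new extension is smaller than $N$. You have not checked this, and it fails for central $N$. For $G=Q_8\times\Z_p$ with $p$ odd and $N=Z(Q_8)$ carrying its faithful character, the cocycle on $C_2\times C_2$ must be non-trivial, so $|L:O_p(L)|\geq 8=|G:O_p(G)|$. The paper instead minimises $|G/O_p(G)O_{p'}(Z(G))|$. This absorbs $Z$ into $O_{p'}(Z(L))$ and strictly decreases precisely because $N\supsetneqq O_{p'}(Z(G))$ in Case (A) (Remark~\ref{rem:GbyOpOpprimeDecreases}). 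The same minimality forces the covered block of $N$ to be $G$-stable, since passing to its stabiliser also lowers this invariant.

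Second, once $C_G(D)=D\times A$ with $A$ finite, central and $p'$, no profinite K\"ulshammer or source-algebra theory is needed. Your compactness sketch for such a theorem is also not justified as stated: over $\bigO$ the sets of candidate idempotents at each finite level are infinite, and the transition maps need not preserve primitivity. The paper's route is short:
\begin{enumerate}
\item make the block of $DC_G(D)$ stable;
\item show $E=G/DC_G(D)$ is a $p'$-group by Brauer's First Main Theorem at a suitable finite level (Proposition~\ref{prop:blockbijection});
\item write $G/A\cong D\rtimes E$ by Schur--Zassenhaus;
\item apply Fong--Reynolds once more with respect to $A$;
\item deduce indecomposability of $\bigO_\alpha\db{D\rtimes E}$ from a finite quotient on which $E$ still acts faithfully.
\end{enumerate}
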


For pro-cyclic defect $\Z_p$ this recovers the result of \cite{JohnRicardoCyclic} and generalises it from $k$ to $\bigO$, since in that case $E\leq C_{p-1}$ and therefore $H^2(E, k^\times)=0$ (see \cite[Proposition~1.2.10]{LinckelmannBookVol1}). The result above furthermore shows that a version of Donovan's conjecture holds for blocks of defect $\Z_p^n$. In the finite group case, Donovan's conjecture predicts that there are only finitely many blocks with any given defect group, up to Morita equivalence. The profinite analogue is obviously false in general. For instance, taking $\bigO \db{(\prod_\N C_p)\rtimes E}$ for various $p'$-groups $E$ shows that there are infinitely many non-equivalent blocks with defect group $D=\prod_\N C_p$. But it does hold for $D=\Z_p^n$ by the theorem above, which raises the following question:
\begin{question}[Donovan problem for profinite groups]\label{question:donovan}
    For which pro-$p$ groups $D$ are there only finitely many blocks with defect group $D$, up to Morita equivalence?
\end{question}
For this question to have a positive answer for any given $D$, it is necessary that $\Aut(D)$ has only finitely many conjugacy classes of $p'$-subgroups (that is, there must be at most finitely many possibilities for the ``inertial quotient''). However, this may not be sufficiently restrictive since the inertial quotient is just one invariant determined by the block pro-fusion system. In light of this, it may be sensible to alter the question slightly: for a fixed $D$ and a fixed saturated pro-fusion system $\mathcal F$ on $D$, are there at most finitely many blocks with block pro-fusion system $\mathcal F$, up to Morita equivalence?
By \cite{ProfusionPaper} this holds if $\mathcal F$ is trivial, and by the present paper this holds if $D\cong \Z_p^n$ and $\mathcal F$ is arbitrary. In full generality, Question~\ref{question:donovan} is obviously hard, since it contains Donovan's conjecture as a special case. But the reductions of \S\ref{section:moritareductions} work surprisingly well for $D=\Z_p^n$, mainly because this group is torsion-free, and the same or similar methods may well apply to other classes of torsion-free defect groups.

\subsection*{Notation \& Conventions}
    \begin{enumerate}
    \item Throughout, $p>0$ is a prime and $k$ is an algebraically closed field of characteristic $p$ endowed with the discrete topology. We let $\bigO$ denote an unramified complete discrete valuation ring of characteristic $0$ such that $\bigO/J(\bigO)\cong k$, endowed with the $J(\bigO)$-adic topology.
    \item Given a profinite group $G$, the completed group algebras $\bigO\db{G}$ and $k\db{G}$ are \emph{pseudocompact algebras}, and we will freely use their properties.  For general background on pseudocompact algebras, see for instance \cite{Brumer, JohnPavelPeterInfGen, IusenkoMacQuarrieSemisimple}. We use the word ``module'' to mean ``pseudocompact module'' unless stated otherwise.
    \item For a pseudocompact algebra $A$, $J(A)$ denotes the Jacobson radical of $A$ -- by definition this is the intersection of the maximal closed left ideals of $A$, but $J(A)$ coincides with the Jacobson radical of $A$ as an abstract algebra (\cite[Proposition 3.2]{IusenkoMacQuarrieSemisimple}).  On the other hand it is not the case that the abstract product $J(A)\cdot J(A)$ is closed (\cite[Example 2.14]{IusenkoMacQuarrieSemisimple}).  {Given closed ideals $I, L$ of $A$, we denote by $I^2$ and $IL$ the topological closures of the abstract products $I\cdot I$ and $I\cdot L$, respectively}.  A closed ideal $I$ of a pseudocompact algebra $A$ will be called a \emph{relation ideal} if $I\subseteq J^2(A)$.
    \item Any ``$G$-algebras'' we consider are $G$-algebras over $\bigO$. We consider $G$-algebras over $k$ to be a special case of this. 
    \item Given a profinite group $G$ and closed normal subgroups $N\subseteq M$ of $G$ we let
    $$
        \varphi_N:\ \bigO\db{G} \longrightarrow \bigO\db{G/N}\quad\textrm{and}\quad \varphi_{MN}:\ \bigO\db{G/N} \longrightarrow \bigO\db{G/M}
    $$
    denote the canonical surjections. 
    \item For a group $G$ acting on an abelian group $A$ and subgroup $H\leq G$ of finite index we define $\Tr_H^G(x)=\sum_{gH\in G/H} {}^gx$ for any $x\in A^H$ (the set of $H$-invariants of $A$).
    \end{enumerate}

\section{Preliminaries}

In this section we will give some basic facts about blocks and defect groups. Defect groups of blocks of profinite groups were first defined in \cite{JohnRicardoBrauer1}, and that paper is a good reference for most of their standard properties. The coefficient ring in \cite{JohnRicardoBrauer1} is assumed to be a field, but the results we require hold for $\bigO$, in light of Proposition \ref{prop:defectgroupsoveroarethesame} below.  We will often use, without further reference, that defect groups of blocks of profinite groups exist \cite[Theorem~5.2]{JohnRicardoBrauer1}, are unique up to conjugation \cite[Proposition~5.7]{JohnRicardoBrauer1}, and are open in a Sylow $p$-subgroup of the ambient group that contains them \cite[Proposition~5.8]{JohnRicardoBrauer1}. All statements (and their proofs) in this section go through without the assumption that $k$ be algebraically closed.

\begin{prop}[{\cite[Proposition~6.4]{JohnPeterBrauerTheory}}]\label{prop:idempotentsdirectlimit}
   For a commutative ring $R$, denote by $E(R)$ its set of primitive idempotents. Let $G$ be a profinite group.  For open normal subgroups $N\subseteq M$ of $G$, the maps
    $$\psi_{MN} : E(Z(\bigO[G/M]))\to E(Z(\bigO[G/N]))$$
    sending $b_M\in E(Z(\bigO[G/M]))$ to the unique $b_N\in E(Z(\bigO[G/N]))$ such that $\varphi_{MN}(b_N)b_M=b_M$, yield a direct system of finite sets, whose direct limit can be naturally identified with $E(Z(\bigO\db{G}))$.  In particular, $E(Z(\bigO\db{G}))$ is discrete with respect to the topology inherited from $\bigO\db{G}$.  The induced maps
     $$\psi_N : E(Z(\bigO[G/N]))\to E(Z(\bigO\db{G}))$$ send $b_N$ to the unique $b\in E(Z(\bigO\db{G})$ such that $\varphi_N(b)b_N = b_N$.
\end{prop}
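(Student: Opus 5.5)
We need to show three things: the maps $\psi_{MN}$ are well defined, they form a direct system, and its direct limit is $E(Z(\bigO\db{G}))$ via the maps $\psi_N$.

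\emph{Well-definedness of $\psi_{MN}$.} The surjection $\varphi_{MN}:\bigO[G/N]\to\bigO[G/M]$ maps $Z(\bigO[G/N])$ into $Z(\bigO[G/M])$. The primitive central idempotents $b_N$ of $\bigO[G/N]$ sum to $1$ and are pairwise orthogonal. Hence their images $\varphi_{MN}(b_N)$ are central idempotents of $\bigO[G/M]$ that are pairwise orthogonal and sum to $1$. Each such image is therefore a sum of primitive central idempotents of $\bigO[G/M]$, and every $b_M$ occurs in exactly one of them. So there is a unique $b_N$ with $\varphi_{MN}(b_N)b_M=b_M$ (for the other $b_N'$ the product is $0$), and $\psi_{MN}$ is well defined.

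\emph{Direct system.} Let $L\subseteq N\subseteq M$. Since $\varphi_{ML}=\varphi_{MN}\circ\varphi_{NL}$, we have
$$\varphi_{ML}(b_L)b_M=\varphi_{MN}(\varphi_{NL}(b_L)b_N)\,b_M$$
when $b_N=\psi_{MN}(b_M)$, using $\varphi_{MN}(b_N)b_M=b_M$. This gives $\psi_{ML}=\psi_{NL}\circ\psi_{MN}$.

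\emph{Identification with $E(Z(\bigO\db{G}))$.} We use $\bigO\db{G}=\varprojlim\bigO[G/N]$, so $Z(\bigO\db{G})=\varprojlim Z(\bigO[G/N])$, because the center is cut out by commuting with the dense image of $G$. The same argument as above shows that for every $b\in E(Z(\bigO\db{G}))$ and every $b_N$ there is exactly one $b$ with $\varphi_N(b)b_N=b_N$, so $\psi_N$ is well defined and compatible with the system. This yields a map $\Psi:\varinjlim E(Z(\bigO[G/N]))\to E(Z(\bigO\db{G}))$.

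For surjectivity, take $b$ primitive. Then $b\neq 0$, so $\varphi_N(b)\neq0$ for some $N$. Any $b_N$ occurring in $\varphi_N(b)$ then maps to $b$.

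For injectivity, suppose $b_N$ and $b'_{N'}$ map to the same $b$. Consider the family of finite sets $S_L$ for $L\subseteq N\cap N'$, where $S_L$ consists of the primitive idempotents $e\in E(Z(\bigO[G/L]))$ occurring in $\varphi_L(b)$. Decompose $\varphi_L(b)=\sum_{e\in S_L}e$ and assign each $e$ to its images in levels $N$ and $N'$. If $b_N$ and $b'_{N'}$ were never reached from a common $e$ at any level $L$, then $S_L$ would split into two nonempty parts $S_L=S_L^1\sqcup S_L^2$, namely the $e$ over $b_N$ and the rest. These splittings are compatible as $L$ shrinks. Their sums would then form a compatible family defining a central idempotent $f\in Z(\bigO\db{G})$ with $0\neq f\neq b$ and $fb=f$, contradicting primitivity of $b$. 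Hence for some $L$ there is $e$ with $\psi_{NL}(b_N)=e=\psi_{N'L}(b'_{N'})$, and the two classes agree in the direct limit.

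\emph{Discreteness.} For fixed $N$, the set $\{b:\varphi_N(b)b_N=b_N\}$ is open in $E(Z(\bigO\db{G}))$ and by the above consists of the single point $\psi_N(b_N)$. Hence every point of $E(Z(\bigO\db{G}))$ is open, and the topology is discrete.

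The main obstacle is the injectivity step, specifically verifying that the splittings $S_L=S_L^1\sqcup S_L^2$ are compatible across levels so that they assemble into a genuine central idempotent $f$ of the inverse limit. This requires checking that the elements of $S_L^1$ are exactly those mapping to elements of $S_{L'}^1$ under $\varphi_{L'L}$, and that inverse limits of idempotents are idempotent.
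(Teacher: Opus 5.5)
The paper does not prove this proposition; it quotes it from \cite{JohnPeterBrauerTheory}, so your argument has to stand on its own. Several steps are fine: well-definedness of $\psi_{MN}$, the direct-system property, $Z(\bigO\db{G})=\varprojlim Z(\bigO[G/N])$, uniqueness of $b$ (distinct primitive central idempotents are orthogonal), surjectivity, and discreteness. Two steps, however, do not go through.

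\emph{Existence of $\psi_N(b_N)$.} This is not ``the same argument as above''. At finite level you used that the finitely many primitive central idempotents sum to $1$. For $Z(\bigO\db{G})$ you do not yet know that $1$ is a convergent sum of primitive idempotents, or even that a nonzero central idempotent lies above a primitive one; that is essentially the content of the proposition.

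\emph{Injectivity.} The splitting you propose is the wrong one, and you leave its compatibility unverified. The maps $\psi_{NL}$ go from level $N$ to the finer level $L$, so an $e\in S_L$ has no single ``image'' at level $N$. The only element of $S_L$ over $b_N$ is $\psi_{NL}(b_N)$. But for $L'\subseteq L$, $\varphi_{LL'}(\psi_{LL'}(e))$ is the sum of all $g\in E(Z(\bigO[G/L]))$ with $\psi_{LL'}(g)=\psi_{LL'}(e)$. Several blocks of a quotient can lie in one block of a finer quotient: for $p=3$ and $G=\Z_3\rtimes C_2$, both blocks of $\bigO C_2$ lie in the unique block of each $\bigO D_{2\cdot 3^n}$. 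Your hypothesis only prevents $\psi_{NL}(b_N)$ from merging with $\psi_{N'L}(b'_{N'})$, not with other elements of $S_L$. So the sums over $S^1_L=\{\psi_{NL}(b_N)\}$ need not form an element of the inverse limit.

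The missing idea is to partition by classes in the direct limit. Write $E_L=E(Z(\bigO[G/L]))$ and $[e]$ for the class of $e\in E_L$. Your first step actually shows that $\varphi_{LL'}(e')=\sum_{f\in E_L,\ \psi_{LL'}(f)=e'}f$ for $e'\in E_{L'}$ and $L'\subseteq L$.
\begin{itemize}
\item For each class $x$, the elements $\beta_L(x)=\sum_{e\in E_L,\ [e]=x}e$ therefore satisfy $\varphi_{LL'}(\beta_{L'}(x))=\beta_L(x)$. They define a nonzero central idempotent $\beta(x)\in Z(\bigO\db{G})$.
\item $\beta(x)$ is primitive. Let $0\neq f=f\beta(x)$ be a central idempotent, so some $e$ with $[e]=x$ occurs in $\varphi_L(f)$. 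Take any $e'\in E_{L'}$ with $[e']=x$, and pass to a level $L''\subseteq L\cap L'$ with $\psi_{LL''}(e)=\psi_{L'L''}(e')$. Then this common element occurs in $\varphi_{L''}(f)$, hence $e'$ occurs in $\varphi_{L'}(f)$. So $f=\beta(x)$.
\item The $\beta(x)$ are pairwise orthogonal with $\sum_x\beta_L(x)=1$ at every level. Hence any primitive $b$ has $b\beta(x)\neq 0$, and so $b=\beta(x)$, for some $x$.
\end{itemize}
Thus $x\mapsto\beta(x)$ is the required bijection, and $\psi_N(b_N)=\beta([b_N])$. This gives existence and injectivity at once, and makes your contradiction argument unnecessary.
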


Note that the previous proposition holds equally over $k$, but we will mostly work over $\bigO$ in this paper. 
We will call primitive idempotents in $Z(\bigO\db{G})$ ``block idempotents''. 

\begin{defn}
    With the notation from Proposition \ref{prop:idempotentsdirectlimit}, a \emph{compatible system of idempotents} $(b_N)_{N}$ is a collection of block idempotents $b_N\in Z(\bigO[G/N])$, where $N$ ranges over some fixed cofinal system of open normal subgroups of $G$, such that
    $$
        \psi_{MN}(b_{M})=b_N
    $$
    for all $N\subseteq M$ in the cofinal system.
    We call it a compatible system of idempotents \emph{for $b$} $\in E(Z(\bigO\db{G}))$ if $\psi_{N}(b_{N}) = b$ for some (and therefore every) $N$ in the cofinal system.
\end{defn}

We summarise below the consequences of Proposition~\ref{prop:idempotentsdirectlimit} that will be relevant to us. As standard, throughout the article, the subscripts ``$O$'' and ``$C$''  mean open and closed, respectively, so for instance $N\unlhd_O G$ means that $N$ is an open normal subgroup of $G$.

\begin{corol}\label{corol:blockidempotents}
    Let $G\unlhd_C H$ be profinite groups, and let $b\in Z(\bigO\db{G})$ be a block idempotent.
    \begin{enumerate}
        \item There exists a compatible system of idempotents for $b$  and the cofinal system may be chosen to be all open normal subgroups contained in some fixed $N_0\unlhd_O G$.
        \item The stabiliser of $b$ in $H$ is open, and we can choose the compatible system of idempotents from the first part so that $\Stab_H(b_N)=\Stab_H(b)$ for each $N$ in the cofinal system.
    \end{enumerate}
\end{corol}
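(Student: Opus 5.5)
For part (1), I will use Proposition~\ref{prop:idempotentsdirectlimit}. It identifies $E(Z(\bigO\db{G}))$ with the direct limit of the finite sets $E(Z(\bigO[G/N]))$ along the maps $\psi_{MN}$. The block $b$ is therefore represented by some $b_{N_0}\in E(Z(\bigO[G/N_0]))$ for a suitable $N_0\unlhd_O G$. For $N\subseteq N_0$ I set $b_N:=\psi_{N_0N}(b_{N_0})$. Compatibility follows from functoriality of the direct system, that is, $\psi_{MN}\circ\psi_{N_0M}=\psi_{N_0N}$. This functoriality in turn follows from the uniqueness in the definition: $\varphi_{MN}(b_N)$ is a central idempotent of $\bigO[G/M]$, and exactly one block of $\bigO[G/M]$ is absorbed by it. Finally, $\psi_N(b_N)=b$ holds because the limit maps satisfy $\psi_N\circ\psi_{N_0N}=\psi_{N_0}$.

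For part (2), I first show that the stabiliser is open. Since $G\unlhd_C H$, conjugation by $h\in H$ is a continuous automorphism of $\bigO\db{G}$, so $H$ permutes $E(Z(\bigO\db{G}))$. Proposition~\ref{prop:idempotentsdirectlimit} says this set is discrete in the topology inherited from $\bigO\db{G}$. The orbit map $H\to \bigO\db{G}$, $h\mapsto {}^hb$, is continuous because the action $H\times\bigO\db{G}\to\bigO\db{G}$ is continuous; I would check this by reducing to finite quotients $\bigO/J^n[G/U]$ with $U\unlhd_O H$ and $U\le G$. Hence the preimage of the open point $\{b\}$ in the discrete image, namely $\Stab_H(b)$, is open in $H$.

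To arrange $\Stab_H(b_N)=\Stab_H(b)$, I restrict the cofinal system to subgroups $N\subseteq N_0$ that are normal in $H$. Such $N$ exist and are cofinal among open normal subgroups of $G$: for $N'\unlhd_O G$, the conjugates ${}^hN'$ are open in $G$ and only finitely many are distinct, since $G$ and an open subgroup of $H$ normalise $N'$. Their intersection is then open in $G$ and normal in $H$. For such $N$, $H$ acts on $\bigO[G/N]$, and $\varphi_N$ is $H$-equivariant. Uniqueness in $\psi_N$ then gives $\psi_N({}^hb_N)={}^h\psi_N(b_N)={}^hb$, so $\Stab_H(b_N)\subseteq\Stab_H(b)$.

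The reverse inclusion is the main obstacle. One cannot expect $b\mapsto b_N$ to be injective in general, and $\psi_N$ need not be injective either, since distinct blocks $b_N$ may go to the same $b$. My plan is to use that $\Stab_H(b)$ has finite index in $H$. Let $h_1,\dots,h_r$ be coset representatives for $H/\Stab_H(b)$, so that the blocks ${}^{h_i}b$ are pairwise distinct. These finitely many distinct elements of the direct limit become distinct at some finite stage. I enlarge $N_0$ accordingly, shrinking it to some $N_1\unlhd H$, so that the elements ${}^{h_i}b_{N_1}$ are pairwise distinct representatives and stay distinct for all $N\subseteq N_1$. Now let $h$ stabilise $b_N$ and write $h=h_is$ with $s\in\Stab_H(b)$. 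If I can choose the representatives so that $s$ also fixes $b_N$, then ${}^{h_i}b_N=b_N$, which forces $h_i\in\Stab_H(b)$ and hence $h\in\Stab_H(b)$.

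Getting $s$ to fix $b_N$ is where some care is needed. I expect to handle it by applying $\varphi_N$: we have $\varphi_N(b)b_N=b_N$, and $\varphi_N(b)$ is an $\Stab_H(b)$-invariant central idempotent of $\bigO[G/N]$. So $\Stab_H(b)$ permutes the blocks of $\bigO[G/N]$ lying under $\varphi_N(b)$. To make this orbit trivial, I would instead choose the representative $b_N$ as in part (1) starting from $N_0$ with $b_{N_0}$ fixed by $\Stab_H(b)$. The maps $\psi_{MN}$ are equivariant and canonical, so the images $b_N=\psi_{N_0N}(b_{N_0})$ are then also fixed by $\Stab_H(b)$. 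To find such a $b_{N_0}$, I would take the image of $b$ in $\bigO[G/N_0]$ for $N_0$ small enough that $\varphi_{N_0}(b)$ is itself primitive. Then $\varphi_{N_0}(b)$ is automatically $\Stab_H(b)$-invariant. Making this precise, namely showing that $\varphi_{N_0}(b)$ is a single block for small $N_0$, is the step I expect to require the most justification. I would do it via the inverse-limit description $Z(\bigO\db{G})=\varprojlim Z(\bigO[G/N])$ on idempotents given by the proposition.
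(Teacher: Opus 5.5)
Your part (1), your proof that $\Stab_H(b)$ is open, your passage to open normal subgroups of $G$ that are normal in $H$, and your inclusion $\Stab_H(b_N)\subseteq\Stab_H(b)$ are all fine. The gap is the reverse inclusion.

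\textbf{The first attempt is circular.} Choosing the coset representatives so that $s\in\Stab_H(b)$ fixes $b_N$ is exactly the statement $\Stab_H(b)\subseteq\Stab_H(b_N)$ that you are trying to prove.

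\textbf{The fallback rests on a false claim.} You assert that $\varphi_{N_0}(b)$ is a single block for $N_0$ small enough. This fails for general profinite groups; the paper stresses before Proposition~\ref{prop:blockbijection} that $\varphi_M$ need not send blocks to blocks, and that proposition needs $G$ virtually pro-$p$. For a counterexample, let $X$ be a finite group with a single block but a nontrivial $p'$-quotient $E=X/O_p(X)$. Examples are $X=C_p\rtimes C_{p-1}$ for odd $p$ and $X=C_2^2\rtimes C_3$ for $p=2$. Put $G=\prod_{\N}X$.
\begin{itemize}
\item $\bigO\db{G}$ has the single block $b=1$, since $\bigO[X^n]$ is a single block for every $n$.
\item Every $N_0\unlhd_O G$ contains $N=\{1\}^n\times O_p(X)\times\prod_{i>n+1}X$ for some $n$.
\item $\bigO[G/N]\cong\bigO[X^n\times E]$ has $|\mathrm{Irr}(E)|>1$ blocks, all of them summands of $\varphi_N(b)=1$.
\end{itemize}
So primitivity of $\varphi_N(b)$ is not inherited by smaller $N$. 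What your construction actually needs is some $N_0$ that is normal in $H$ with $\varphi_{N_0}(b)$ primitive. You do not establish this, and you cannot get it by shrinking a non-$H$-normal $N_0'$ with primitive image. Nor does it follow from Proposition~\ref{prop:idempotentsdirectlimit}, which describes block idempotents as a direct limit of finite sets and says nothing about the images $\varphi_N(b)$ being primitive.

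\textbf{The missing idea.} You do not need one special representative; it suffices to shrink the cofinal system, and the direct limit gives exactly this.
\begin{enumerate}
\item If $N'\subseteq N$ are both normal in $H$, equivariance of $\psi_{NN'}$ gives $\Stab_H(b_N)\subseteq\Stab_H(b_{N'})$.
\item If $h\in\Stab_H(b)$, then ${}^hb_N$ and $b_N$ represent the same element $b$ of the direct limit. Hence $\psi_{NN'}({}^hb_N)=\psi_{NN'}(b_N)$, that is ${}^hb_{N'}=b_{N'}$, for some smaller $N'$.
\item So $\Stab_H(b)=\bigcup_N\Stab_H(b_N)$ is a directed union of subgroups. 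Each is open, because $H$ acts continuously on the finite group $G/N$.
\item Since $\Stab_H(b)$ is open, hence closed, hence compact, it equals $\Stab_H(b_{N_1})$ for some $N_1$, and therefore equals $\Stab_H(b_N)$ for every $N\subseteq N_1$.
\end{enumerate}
This is the paper's argument, and it avoids any claim about $\varphi_N(b)$ being primitive.
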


\begin{proof}
    The first part is immediate from Proposition~\ref{prop:idempotentsdirectlimit}. For the second part, the fact that $\Stab_H(b)$ is open follows from the fact that the set of block idempotents is discrete in $\bigO\db{G}$ (upon which $H$ acts continuously by conjugation since $G$ is normal). Choose a compatible system of idempotents as in the first part, taking $N_0 = G\cap M_0$ for a small enough open normal subgroup $M_0$ of $H$, in order that $N_0$ be normal in $H$.  Because the direct system from Proposition \ref{prop:idempotentsdirectlimit} is $H$-equivariant, we have  
    $$\Stab_H(b) = \bigcup_{M\unlhd_O H} \Stab_H(b_{M\cap N_0}).$$
    But the stabilisers $\Stab_H(b_{M\cap N_0})$ are open, and hence we must have
    $\Stab_H(b)=\Stab_H(b_{M\cap N_0})$ for some $M$, proving the claim.
\end{proof}

We will now recall the definition and relevant properties of defect groups.

\begin{defn}
    Let $G$ be a profinite group and let $A$ be a pseudocompact $G$-algebra such that $1_A$ is a primitive idempotent in $A^G$. 
    \begin{enumerate}
    \item We define
    $$
        \oTr_H^G(A^H)= \bigcap_{N\unlhd_O G} \Tr_{NH}^G(A^{NH}).
    $$
    \item We call a closed subgroup $D\leq G$ a \emph{defect group} of $A$ if $1_A\in \oTr_D^G(A)$, and no proper closed subgroup of $D$ has the same property. 
    \end{enumerate}
\end{defn}

This is slightly more general than the definition given in \cite{JohnRicardoBrauer1}.
In particular, the paper \cite{JohnRicardoBrauer1} defines defect groups of block algebras only over $k$. However, the same definition can be used over $\bigO$ and Proposition~\ref{prop:defectgroupsoveroarethesame} below shows that, just like in the finite case, defect groups over $k$ and $\bigO$ coincide.

\begin{prop}\label{prop:defectgroupsoveroarethesame}
    Let $b\in Z(\bigO\db{G})$ be a block idempotent, where $G$ is a profinite group, and denote by $\bar b$ its image in $Z(k\db{G})$. Then the defect groups of $\bigO\db{G}b$ and $k\db{G}\bar b$ are the same.  
\end{prop}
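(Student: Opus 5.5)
The plan is to show that for a closed subgroup $H\leq G$ we have $b\in \oTr_H^G(\bigO\db{G}b^H)$ if and only if $\bar b\in \oTr_H^G(k\db{G}\bar b^H)$. Here $G$ acts by conjugation on $A=\bigO\db{G}b$ and on $\bar A=k\db{G}\bar b$. Since defect groups are the minimal closed subgroups with this property, the two sets of defect groups then coincide. We also use that $1_A=b$ is primitive in $A^G=Z(\bigO\db{G})b$ and that $\bar b$ is primitive in $Z(k\db{G})$. The latter holds because block idempotents lift uniquely, by Proposition~\ref{prop:idempotentsdirectlimit} applied over both rings together with the finite case at each level.

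The forward direction is immediate. Reduction $\bigO\db{G}\to k\db{G}$ is $G$-equivariant and sends $A^{NH}$ into $\bar A^{NH}$, so it commutes with $\Tr_{NH}^G$. If $b\in \Tr_{NH}^G(A^{NH})$ for every $N\unlhd_O G$, then $\bar b\in \Tr_{NH}^G(\bar A^{NH})$ for every such $N$.

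For the converse, fix $N\unlhd_O G$; we must show $b\in \Tr_{NH}^G(A^{NH})$. Put $L=NH$, which is open in $G$. Then $\Tr_L^G(A^L)$ is an ideal of $A^G$ (the usual Frobenius-type identity $a\Tr_L^G(x)=\Tr_L^G(ax)$ for $a\in A^G$). Since $A^G=Z(\bigO\db{G})b$ is a local ring ($b$ primitive, pseudocompact), it suffices to produce an element of $\Tr_L^G(A^L)$ that is a unit in $A^G$.

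The main point, and the expected obstacle, is lifting. We know $\bar b=\Tr_L^G(\bar y)$ for some $\bar y\in\bar A^L$, and we need a lift $y\in A^L$ of $\bar y$. That is, we need $A^L\to \bar A^L$ to be surjective. I would prove this by writing $\bigO\db{G}=\invlim \bigO[G/M]$ over $M\unlhd_O G$ and using that $\bigO[G/M]$ is a permutation $\bigO L$-module (on the basis $G/M$). For a permutation module the $L$-fixed points are spanned by orbit sums, so $\bigO[G/M]^L\to k[G/M]^L$ is surjective. Passing to the inverse limit over compact (pseudocompact) modules preserves surjectivity: the fixed points form inverse limits of finitely generated $\bigO$-modules, and Mittag-Leffler holds for compact modules. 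Multiplying by $b$, which is $G$-invariant, gives surjectivity $A^L\to\bar A^L$.

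With such a lift $y$, the element $\Tr_L^G(y)\in A^G$ reduces to $\bar b$. Hence $b-\Tr_L^G(y)$ lies in $J(\bigO)A\cap A^G\subseteq J(A^G)$, using that $A^G$ is local with residue map onto $\bar A^G$ modulo $J(\bigO)$. So $\Tr_L^G(y)$ is a unit in $A^G$, and therefore $b\in\Tr_L^G(A^L)$. Intersecting over all $N$ gives $b\in\oTr_H^G(A^H)$, which completes the equivalence.
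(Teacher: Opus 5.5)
Your proposal is correct and follows the paper's proof essentially step for step. You use the same reduction to showing $b\in\oTr_H^G$ if and only if $\bar b\in\oTr_H^G$, and the same surjectivity of $\bigO\db{G}^{NH}\to k\db{G}^{NH}$ via orbit sums in $\bigO[G/M]$ and exactness of inverse limits. Your final unit-correction step is also the paper's: it multiplies by $(1+y)^{-1}$ in $Z(\bigO\db{G})$, which amounts to your observation that $\Tr_L^G(A^L)$ is an ideal of the local ring $A^G$ containing a unit.
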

\begin{proof}
    Pick some closed subgroup $H$ of $G$. We will prove the claim by showing that $b\in \oTr_H^G(\bigO\db{G}^H)$ if and only if $\bar b \in \oTr_H^G(k\db{G}^H)$.
    Clearly $b\in \oTr_H^G(\bigO\db{G}^H)$ implies $\bar b \in \oTr_H^G(k\db{G}^H)$. Conversely, assume $\bar b \in \oTr_H^G(k\db{G}^H)$. Take $N\unlhd_O G$, and take an $\bar x\in (k\db{G}b)^{HN}$ such that $\Tr_{HN}^G(\bar x)=\bar b$. The reduction map $\bigO\db{G}^{NH}\longrightarrow k\db{G}^{NH}$ is surjective. {To see this, note that it is the inverse limit of the reduction maps $\bigO[G/M]^{NH}\longrightarrow k[G/M]^{NH}$ ($M\unlhd_O G$), and these maps are clearly surjective given that domain and codomain are spanned as $\bigO$-modules by the $NH$-conjugacy class sums of group elements (and inverse limits in the category of pseudocompact $\bigO$-modules are exact by \cite[Th\'{e}or\`{e}me 3 in Chapter IV]{GabrielCategoriesAbeliennes}).}
    So we can pick an $x\in \bigO\db{G}^{NH}$ reducing to $\bar x$ such that $\Tr_{HN}^G(x)=b+y$ for some $y\in J(\bigO)Z(\bigO\db{G})$. Then $1+y$ is invertible in $Z(\bigO\db{G})$, giving us $\Tr_{HN}^G((1+y)^{-1}xb)=b$. Since this is true for all $N$ we get $b\in \oTr_H^G(\bigO\db{G}^H)$. 
\end{proof}

\begin{prop}\label{prop:defectgroupmaps}
    Let $G$ be a profinite group and let $A$ and $B$ be pseudocompact $G$-algebras. Assume that 
    $$
        \varphi:\ A \longrightarrow B
    $$
    is a continuous and additive $G$-equivariant map restricting to a ring isomorphism between $Z(A)$ and $Z(B)$. Given a primitive idempotent $e\in Z(A)$, any defect group of $Ae$ contains a defect group of $B\varphi(e)$.
\end{prop}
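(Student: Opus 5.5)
The argument transports the defining trace condition along $\varphi$, so let $D$ be a defect group of $Ae$ and show that $\varphi(e)\in\oTr_D^G(B)$. The minimality part of the definition of a defect group then gives the conclusion by a Zorn-type argument. Let $f=\varphi(e)$, which is a primitive idempotent of $Z(B)$ because $\varphi|_{Z(A)}$ is a ring isomorphism. I read $Ae$ as a $G$-algebra whose unit is $e$. For this to make sense, and for $B\varphi(e)$ likewise, $e$ must be $G$-fixed. I will assume this, as is implicit in the statement; it holds in the intended application where $e$ is a block idempotent and $G$ acts by conjugation. Since $\varphi$ is $G$-equivariant and bijective on centres, $f$ is then $G$-fixed as well. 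I also need $f$ primitive in $(Bf)^G$, or at least that $Bf$ has a defect group. In the intended application $A^G$ and $B^G$ are essentially centres, or local, so I take this as part of the hypotheses as the paper does.

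The key step is this. Fix $N\unlhd_O G$. Since $e\in\Tr_{DN}^G(A^{DN})$, choose $x\in A^{DN}$ with $\sum_{gDN\in G/DN}{}^gx=e$; replacing $x$ by $xe$ we may assume $x\in (Ae)^{DN}$. Because $\varphi$ is additive and $G$-equivariant, $\varphi(x)\in B^{DN}$ and
$$\Tr_{DN}^G(\varphi(x))=\sum_{gDN}{}^g\varphi(x)=\varphi\Big(\sum_{gDN}{}^gx\Big)=\varphi(e)=f.$$
Additivity suffices here since the trace is a finite sum. Moreover $\varphi(x)f$ lies in $(Bf)^{DN}$, because $f$ is $G$-fixed. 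Since $f$ is central, $\Tr_{DN}^G(\varphi(x)f)=\Tr_{DN}^G(\varphi(x))f=f$. Intersecting over all $N$ gives $f\in\oTr_D^G((Bf)^D)$.

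It remains to pass from this to a defect group of $Bf$ contained in $D$. The closed subgroups $H\le D$ with $f\in\oTr_H^G(Bf)$ form a nonempty family. I would show that it has a minimal element by Zorn's lemma, which needs the property to pass to intersections of chains. This is the main obstacle. For a chain $(H_i)$ with intersection $H$ and a fixed $N$, compactness gives $HN=H_iN$ for some $i$, because the open subgroup $HN$ contains $\bigcap_i H_iN$ and the $H_iN$ form a chain of closed sets. Hence $\Tr_{HN}^G=\Tr_{H_iN}^G$, so $f\in\oTr_H^G$. A minimal element is then a defect group of $Bf$ contained in $D$. Alternatively, one can cite the existence argument of \cite[Theorem~5.2]{JohnRicardoBrauer1}, which proceeds exactly this way, to conclude.
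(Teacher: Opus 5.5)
Your proof is correct and follows the paper's argument. You transport $e\in\Tr_{DN}^G((Ae)^{DN})$ along the additive $G$-equivariant map $\varphi$ to get $\varphi(e)\in\oTr_D^G$, then extract a minimal subgroup below $D$ by Zorn's lemma, as in \cite[Theorem~5.2]{JohnRicardoBrauer1}. Your extra steps make explicit what the paper leaves implicit: cutting $\varphi(x)$ down by $f$ so the trace lies in $(Bf)^{DN}$, and the compactness check that the trace condition survives intersections of chains.
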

\begin{proof} 
    Let $D$ be a defect group of $Ae$. Then $e\in \Tr_{ND}^G(A^{ND})$ for any $N\unlhd_O G$. By equivariance, $\varphi(e)\in \Tr_{ND}^G(B^{ND})$ for any $N$, and therefore $\varphi(e) \in \oTr_D^G(B^D)$. Hence, there is a defect group of $B$ contained in $D$ (one may use Zorn's lemma as in \cite[Theorem~5.2]{JohnRicardoBrauer1} to find it).
\end{proof}

\begin{prop}\label{prop:defectgrpmodulopprime}
    Let $G$ be a profinite group and let $A$ be a pseudocompact $G$-algebra. If $N$ is a normal pro-$p'$ subgroup of $G$ acting trivially on $A$, then a closed subgroup $D$ of $G$ is a defect group of $A$ as a $G$-algebra if and only if $DN/N$ is a defect group of $A$ as a $G/N$-algebra.   
\end{prop}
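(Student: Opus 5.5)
The plan is to compare the relative traces $\Tr$ on $A$ for $G$ and for $\bar G = G/N$, and then use the minimality built into the definition of a defect group. Since $N$ acts trivially on $A$, we have $A^{HN}=A^{H}$ for every closed subgroup $H\le G$. The $G/N$-invariants of $A$ are the $G$-invariants, so $1_A$ is primitive in $A^{G/N}$ exactly when it is primitive in $A^G$. It therefore suffices to prove the following claim for every closed subgroup $H\le G$:
$$
1_A\in\oTr_H^G(A^H) \iff 1_A\in \oTr_{HN}^{G}(A^{HN}) \iff 1_A\in \oTr_{HN/N}^{G/N}(A^{HN/N}).
$$
After that, a minimality comparison finishes the argument.

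For the second equivalence, open normal subgroups of $G/N$ are exactly the images $MN/N$ with $M\unlhd_O G$. Moreover $\Tr_{HMN}^G$ on $A^{HMN}$ coincides with $\Tr_{HMN/N}^{G/N}$, because $G/HMN \cong (G/N)/(HMN/N)$ and the actions agree. The intersection over $M$ of $\Tr_{HMN}^G$ equals the intersection of $\Tr_{HM}^G$ over all $M$ together with the condition involving $N$; equivalently, $\oTr_{HN}^G$ is an intersection over the same cofinal family, so the second equivalence is formal. The real content is the first equivalence. Since $\Tr_{HM}^G = \Tr_{HNM}^G\circ\Tr_{HM}^{HNM}$, one inclusion $\oTr_H^G\subseteq \oTr_{HN}^G$ is clear.

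For the converse, fix $M\unlhd_O G$ and $x\in A^{HNM}$ with $\Tr_{HNM}^G(x)=1_A$. The index $|HNM:HM|$ divides $|NM:M|$, because $HNM/HM$ is a quotient set of $NM/(NM\cap HM)$. Here $NM/M\cong N/(N\cap M)$ is a finite $p'$-group, so this index is a unit in $\bigO$ and hence in $A$. Since $N$ acts trivially, $x$ is fixed by $HM$ and $\Tr_{HM}^{HNM}(x)=|HNM:HM|\,x$. Therefore $1_A=\Tr_{HM}^G\big(|HNM:HM|^{-1}x\big)\in \Tr_{HM}^G(A^{HM})$. Intersecting over $M$ gives $1_A\in \oTr_H^G(A^H)$. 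I expect this step to be the main obstacle; the only care needed is the index computation with open subgroups.

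It remains to transfer minimality. Let $D$ be a defect group for $G$. Then $DN/N$ satisfies $1_A\in\oTr$. If a closed $\bar Q<DN/N$ also satisfies it, write $\bar Q=QN/N$ with $N\le Q$ closed. The claim then gives $1_A\in \oTr_Q^G$, and $D$ is $G$-conjugate into $Q$; this uses the standard fact that defect groups are minimal in the conjugacy sense, as in \cite[Proposition~5.7]{JohnRicardoBrauer1}, equally for $G$-algebras. Hence $DN\le {}^gQ$, which contradicts $QN/N$ being proper up to conjugacy. Minimality fails only if equality holds up to conjugacy, so $DN/N$ is a defect group.

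Conversely, suppose $DN/N$ is a defect group and $D$ is such that $1_A\in\oTr_D^G$ and $D$ is minimal among closed subgroups with $DN$ fixed. Choose a defect group $D'\le D$, via Zorn's lemma as in Proposition~\ref{prop:defectgroupmaps}. Then $D'N/N$ is a defect group contained in $DN/N$, hence equal to it by minimality. For the pro-$p$ issue, both $D$ and $D'$ are pro-$p$ complements-type subgroups of $DN$ with $D'\le D$, and $D\cap N$ is a $p'$-group inside a pro-$p$ group, hence trivial. Thus $D'\le D$ and $D'N=DN$ force $D'=D$ by comparing $D\cong DN/N\cong D'$; we need $D$ pro-$p$ here, which is the natural reading of the statement. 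So $D$ is a defect group.
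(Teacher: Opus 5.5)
Your argument is essentially the paper's. The key step in both is the identity $\Tr_{HM}^G(x)=[HMN:HM]\,\Tr_{HMN/N}^{G/N}(x)$ for $x\in A^{HM}=A^{HMN}$, where $[HMN:HM]$ is a $p'$-number; this gives $\oTr_H^G(A^H)=\oTr_{HN/N}^{G/N}(A^{HN/N})$ for every closed $H$, and the paper then says only that the result ``follows from the definition''.

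Your explicit minimality transfer is correct. You are right that the converse genuinely needs $D$ pro-$p$: take $G=N\neq 1$ a finite $p'$-group acting trivially on $A=\bigO$ and $D=N$; then $DN/N=1$ is a defect group but $D$ is not. Both directions also follow more cheaply from Dedekind's law than from conjugacy of defect groups and Zorn's lemma. If $N\le Q\subsetneq DN$, then $Q=(Q\cap D)N$ with $Q\cap D\subsetneq D$. If $D'\le D$, $D'N=DN$ and $D\cap N=1$, then $D=D'(D\cap N)=D'$. This is also the right way to phrase your final step, since an abstract isomorphism $D\cong D'$ alone would not force $D'=D$.
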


\begin{proof}
    For any $H\unlhd_C G$, any $M\unlhd_O G$ and any $x\in A^{MH}=A^{MHN/N}$ we have $\Tr_{MH}^G(x)=[MHN:MH]\Tr_{MHN/N}^{G/N}(x)$. Since $[MHN:MH]\in\bigO^\times$ it follows that $\oTr_{H}^G(A^H)=\oTr_{HN/N}^{G/N}(A^{HN/N})$. The proposition now follows from the definition of a defect group.
\end{proof}

\section{Finiteness properties of blocks and defect groups}

In this section we will show that a block of a profinite group is isomorphic to a block of a virtually pro-$p$ group if and only if it has only finitely many simple modules, and we will show that every block with a topologically finitely generated defect group has this property. Given that the number of simple modules determines the number of vertices of the Gabriel quiver of the basic algebra of the block, a natural follow-up question is when that basic algebra is finitely presented. This is classically determined by $\Ext^1$'s and $\Ext^2$'s between simples, and we will show that this remains true in the pseudocompact setting.

\begin{theorem}\label{theorem:FiniteSimplesIFFBlockOfvProp}
    Let $\bigO\db{G}b$ be a block of a profinite group $G$.  Then $\bigO\db{G}b$ has finitely many simple modules if, and only if, it is a block of a continuous quotient group of $G$ that is virtually pro-$p$.
\end{theorem}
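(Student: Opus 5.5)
The direction \emph{``block of a virtually pro-$p$ quotient $\Rightarrow$ finitely many simples''} is the easy one. The direction \emph{``finitely many simples $\Rightarrow$ block of a virtually pro-$p$ quotient''} requires building a closed normal subgroup $N$ of $G$ that acts trivially on all of $\bigO\db{G}b$. The key input is that a profinite group with no nontrivial pro-$p$ quotient has a trivial projective cover of its trivial module.

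\emph{Easy direction.} Suppose $\bigO\db{G}b\cong \bigO\db{H}c$ with $H$ virtually pro-$p$, and let $P\unlhd_O H$ be pro-$p$. Every simple pseudocompact $\bigO\db{H}$-module $S$ is a finite-dimensional $k$-vector space with continuous action. A pro-$p$ group acting continuously on a finite-dimensional $k$-space has nonzero fixed points, via the action of a finite $p$-quotient. Normality of $P$ and simplicity of $S$ then force $S^P=S$. So the simple $\bigO\db{H}$-modules are exactly the simple $k[H/P]$-modules, and there are only finitely many.

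\emph{Hard direction, choice of $N$.} Let $S_1,\dots,S_m$ be the simple $\bigO\db{G}b$-modules. Each has open kernel, so $K=\bigcap_i \ker(G\to \Aut(S_i))$ is open and normal in $G$. Let $N$ be the intersection of all $L\unlhd_O K$ with $K/L$ a finite $p$-group. Then $N$ is characteristic in $K$, hence closed normal in $G$, and $K/N$ is pro-$p$, so $G/N$ is virtually pro-$p$. Moreover $N$ has no nontrivial finite $p$-quotient. Indeed, if $L'\unlhd_O N$ had $N/L'$ a $p$-group, we may replace $L'$ by the intersection of all its $K$-conjugates (a finite intersection, since $L'$ has finite index in $N$). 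This makes $L'$ normal in $K$ with $K/L'$ an extension of pro-$p$ groups, hence pro-$p$, which forces $L'\supseteq N$. Consequently $\Hom(N,\F_p)=0$.

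\emph{Main step: $N$ acts trivially on $\bigO\db{G}b$.} This is the step I expect to be the main obstacle. Take a projective indecomposable $\bigO\db{G}b$-module $P_i$. Its restriction to $N$ is a projective $\bigO\db{N}$-module all of whose composition factors (mod $J(\bigO)$) are trivial, since $N\le K$. Over $k$, the projective cover of the trivial $k\db{N}$-module has only trivial composition factors in this block. Since $\Ext^1_{k\db{N}}(k,k)\cong\Hom(N,\F_p)=0$, its second radical layer vanishes, so the projective cover is $k$ itself. Lifting to $\bigO$, the projective cover of the trivial lattice is a rank-one lattice on which $N$ acts by a character $N\to 1+J(\bigO)$. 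The target is pro-$p$, so this character is trivial. Hence every projective $\bigO\db{N}$-module with trivial composition factors is a product of copies of $\bigO$ with trivial action, so $N$ acts trivially on each $P_i$. Since $\bigO\db{G}b$ is a product of such $P_i$ as a left module, we get $(n-1)b=0$ for all $n\in N$. Some care is needed to make the ``projective with trivial composition factors'' argument rigorous in the pseudocompact setting, for instance by working with inverse limits of finite quotients.

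\emph{Conclusion.} By the main step, $b$ annihilates the kernel of $\varphi_N$, which is the closed ideal generated by the elements $n-1$. Hence $\varphi_N$ restricts to an isomorphism from $\bigO\db{G}b$ onto $\bigO\db{G/N}\varphi_N(b)$. The element $\varphi_N(b)$ is a central idempotent, and it is primitive because central idempotents of the image pull back. By Proposition~\ref{prop:idempotentsdirectlimit} it is therefore a block idempotent of the virtually pro-$p$ group $G/N$.
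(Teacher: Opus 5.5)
Your proof is correct, but the hard direction takes a genuinely different route from the paper's.

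\textbf{The paper's route.} The paper takes $N$ to be the full kernel of $g\mapsto gb$. Then $\bigO\db{G}b$ being a block of $\bigO\db{G/N}$ is immediate from $I_N\bigO\db{G}b=0$. All the work goes into showing that $G/N$ is virtually pro-$p$. Let $L$ be the open kernel on simples. Suppose a pro-$p'$ element $g\in L\setminus N$ existed. By induction on length, $g$ would act trivially on every finite-length $\bigO\db{G}b$-module, using that a finite $p'$-group acts trivially on every module in its principal block. Hence $g$ would lie in $N$, a contradiction.

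\textbf{Your route.} You choose $N$ as the $p$-residual of $K$, so virtual pro-$p$-ness comes for free. The work instead goes into showing that $N$ acts trivially on the block: $H^1(N,\F_p)=0$ forces $P_N(k)=k$. What you really use there is that $P_N(k)$ is a direct summand of $\overline{P_i}|_N$, and hence has only trivial composition factors. Your phrase ``in this block'' should be replaced by that statement.

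\textbf{What each buys.} The paper's argument is more elementary. It needs no projective covers and no restriction of projectives to closed subgroups. It also avoids the identification $\Ext^1_{\PC(k\db{N})}(k,k)\cong H^1(N,k)$ and any lifting from $k$ to $\bigO$. It also yields the canonical largest such $N$. Yours gives an explicit description of a suitable $N$. As a by-product it shows that this $N$ is pro-$p'$, since its trivial module is projective.

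\textbf{One small repair.} Finiteness of the set of $K$-conjugates of $L'$ does not follow from $[N:L']<\infty$ alone. A profinite group that is not finitely generated can have infinitely many open normal subgroups of a given index. Argue instead that $L'\supseteq N\cap U$ for some $U\unlhd_O K$. Since $[U,N]\subseteq U\cap N\subseteq L'$, $U$ normalises $L'$, so $L'$ has only finitely many $K$-conjugates.
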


\begin{proof}
    Firstly, if $\bigO\db{G}b$ is isomorphic to a block of a virtually pro-$p$ group $H$, then it has finitely many simples, because $\bigO\db{H}$ has finitely many simples. The reason is that a simple pseudocompact module is automatically discrete and therefore a module over a finite quotient of $H$. The image of $O_p(H)$ becomes a normal $p$-subgroup in such a finite quotient and therefore acts trivially on any simple. That is, the simple $\bigO\db{H}$-modules are the simple $\bigO[H/O_p(H)]$-modules, of which there are only finitely many.  So we must show the other direction. 
    
    Suppose that $\bigO\db{G}b$ has finitely many simples.  Denote by $N$ the kernel of the continuous homomorphism $G\to (\bigO\db{G}b)^{\times}$ sending $g$ to $gb$.  Then $N$ acts trivially on every pseudocompact $\bigO\db{G}b$-module, because given such a module $V$, $n\in N$ and $v\in V$ we have $nv = nbv = bv = v$.  Furthermore, it is maximal as such, because any element not in $N$ acts non-trivially on $\bigO\db{G}b$ by construction.
    
    Then $\bigO \db{G}b$ is a block of $\bigO\db{G/N}$, because if $\bigO\db{G} = \bigO \db{G}b \times C$ then $\bigO\db{G/N} = \bigO \db{G}/I_N\bigO \db{G} = \bigO \db{G}b\times C/I_NC$, where $I_N$ denotes the augmentation ideal of $\bigO\db{N}$, and we have used $I_N\bigO\db{G}b = 0$. So it remains to check that $G/N$ is virtually pro-$p$.  

    As there are finitely many simples, each with open kernel, there is an open normal subgroup $L$ of $G$ that acts trivially on all the simple modules.  We claim that $LN/N$ is a pro-$p$ group, which will imply that $G/N$ is a virtually pro-$p$ group because $LN$ is open in $G$.

    If $LN/N$ is not pro-$p$, then there is a pro-$p'$ element $g$ of $L$ not in $N$ -- this can be seen by taking a non-pro-$p$ element $g'$ and writing the abelian group $\overline{\langle g'\rangle}$ as a product of a pro-$p$ group and a pro-$p'$ group.  We will obtain a contradiction by showing that $g\in N$.  Let $H = \overline{\langle g\rangle}$.  We claim that $H$ acts trivially on every pseudocompact $\bigO\db{G}b$-module, so let $U$ be such a module.  Then $g$ acts trivially on $U$ if and only if it acts trivially on every finite length quotient of $U$, so we will suppose that $U$ is of finite length and work by induction, with the base case coming by our hypothesis on $L$.  If $U$ is not simple, then there is a short exact sequence of $\bigO\db{G}$-modules
    $$0 \to V\to U\to W\to 0,$$
    and $H$ acts trivially on $V,W$. Since these are modules of finite length, $H$ acts on all three modules above through some finite quotient (necessarily of $p'$-order). Since $V$ and $W$ lie in the principal block of this finite quotient, so does $U$ and, being a $p'$-group, the finite quotient acts trivially on all modules in its principal block. So $g$ acts trivially on $U$, and therefore on all pseudocompact $\bigO\db{G}b$-modules. 
    The same is true of all conjugates of $g$ because $L$ is normal, and hence the normal subgroup generated by $g$ is contained in $N$, because $N$ is the unique maximal normal subgroup acting trivially on every pseudocompact $\bigO\db{G}b$-module. 
\end{proof}

    \begin{theorem}\label{theorem:reduction_to_virtually_pro_p}
        Let $\bigO\db{G}b$ be a block with defect group $D\leq G$.
        If $D$ is topologically finitely generated, then there is a closed normal pro-$p'$ subgroup 
        $N_0\unlhd G$ such that for any closed normal subgroup $N$ of $G$ which is open in $N_0$, the quotient $G/N$ is virtually pro-$p$ and $\varphi_N$ restricts to an isomorphism of pseudocompact algebras
        $$\bigO\db{G} b \stackrel{\sim}{\longrightarrow} \bigO\db{G/N}\varphi_N(b).$$
        Furthermore, the defect group of $\bigO\db{G/N}\varphi_N(b)$ is isomorphic to $D$.
    \end{theorem}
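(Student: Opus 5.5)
Let $K$ be the kernel of $g\mapsto gb$, as in the proof of Theorem~\ref{theorem:FiniteSimplesIFFBlockOfvProp}. I would show that $G/K$ is virtually pro-$p$ and then build $N_0$ inside $K$ as a pro-$p'$ group. The core step is to show that $\bigO\db{G}b$ has finitely many simple modules when $D$ is finitely generated; after that, Theorem~\ref{theorem:FiniteSimplesIFFBlockOfvProp} gives that $\bigO\db{G}b$ is a block of $G/K$ with $G/K$ virtually pro-$p$.

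\textbf{Finitely many simples.} Since $D$ is finitely generated pro-$p$, its Frattini subgroup $\Phi(D)$ is open. Choose $M\unlhd_O G$ with $M\cap D\subseteq \Phi(D)$, and take a compatible system of idempotents $(b_N)$ as in Corollary~\ref{corol:blockidempotents}, with the $N$ running over open normal subgroups inside some $N_0'\subseteq M$. By Proposition~\ref{prop:defectgroupmaps} applied to $\varphi_N$, the defect groups $D_N$ of the $b_N$ can be chosen compatibly so that $D_N$ is contained in the image $DN/N$.

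The plan is to argue that $D_N$ must equal $DN/N$. If it were smaller, one would get a proper closed subgroup of $D$ in the $\oTr$ condition by passing to the limit of the $D_N$, via a compactness/Zorn argument as in \cite[Theorem~5.2]{JohnRicardoBrauer1}. Granting this, each $b_N$ is a block of a finite group with defect group $DN/N\cong D/(D\cap N)$.

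The number of simples of $b_N$ is bounded by the number of simples of a block of the finite group $G/N$. I would try to bound this uniformly, using that simples of $b_N$ inflate to simples of $b_{N'}$ for $N'\subseteq N$, so the counts are non-decreasing. Directly bounding them in terms of the defect group is Brauer's $k(B)$-problem, which is too hard. Instead I would use that simple $\bigO\db{G}b$-modules correspond to simples of $b_N$ for $N$ small enough, and argue that the simples of all $b_N$ already arise from the block $b_{N_1}$ for a fixed $N_1$ chosen with $D\cap N_1\subseteq\Phi(D)$. This would use the Fong--Reynolds/Clifford correspondence with the normal subgroup $N_1/N$, whose intersection with the defect group lies in the Frattini subgroup.

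\textbf{Main obstacle.} This uniform boundedness is the hardest step. The first approach I would try is to show that $b_N$ and $b_{N_1}$ have the same number of simples by showing that $N_1/N$ acts on the simples through a $p$-group, perhaps via a vertex/defect argument in the spirit of the proof of Theorem~\ref{theorem:FiniteSimplesIFFBlockOfvProp}.

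\textbf{Construction of $N_0$.} Write $G/K$ as virtually pro-$p$ with open pro-$p$ normal subgroup $P/K$. Take a Sylow $p$-subgroup $S$ of $P$; then $P=SK$. I would let $N_0$ be a closed normal pro-$p'$ subgroup of $G$ contained in $K$ with $G/N_0$ virtually pro-$p$, for instance a normal $p$-complement of $K$ restricted to a suitable open subgroup (Schur--Zassenhaus-type arguments in profinite groups). If $N\unlhd_C G$ is open in $N_0$, then $G/N$ is an extension of the virtually pro-$p$ group $G/N_0$ by the finite group $N_0/N$, so $G/N$ is virtually pro-$p$.

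\textbf{The isomorphism and the defect group.} Since $N\subseteq K$ and $N$ acts trivially on the block, the argument of Theorem~\ref{theorem:FiniteSimplesIFFBlockOfvProp} gives $\bigO\db{G}b\cong\bigO\db{G/N}\varphi_N(b)$ via $\varphi_N$. For the defect group, $N$ is pro-$p'$ and acts trivially on $\bigO\db{G}b$, so Proposition~\ref{prop:defectgrpmodulopprime} applies. Together with the identification of the block algebras as $G$-algebras, it shows that the defect group of the image block is $DN/N\cong D$, since $D\cap N=1$ because $N$ is pro-$p'$.
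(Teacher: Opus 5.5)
\textbf{There is a genuine gap.} Your overall architecture is sound: finitely many simples, then $G/K$ virtually pro-$p$ by Theorem~\ref{theorem:FiniteSimplesIFFBlockOfvProp}, then $N_0$ built inside $K$. But the step carrying all the weight is not proved, namely that finite generation of $D$ forces $\bigO\db{G}b$ to have finitely many simple modules. You need every simple $b_N$-module to be inflated from $G/N_1$. The proposed ``vertex/defect argument in the spirit of Theorem~\ref{theorem:FiniteSimplesIFFBlockOfvProp}'' cannot give this. That argument only propagates triviality of an element's action from the simple modules to all finite-length modules. It never shows that a $p'$-element of $N_1/N$ acts trivially on the simples in the first place. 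Nothing in your proposal explains why $N_1/N$ should act on simples through a $p$-group, and a genuinely group-theoretic input is needed there. The paper runs the logic the other way: finiteness of simples (Corollary~\ref{corol:D_fg_implies_finite_simples}) is deduced \emph{from} this theorem.

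\textbf{What the paper uses.} A Sylow $p$-subgroup $S\supseteq D$ is finitely generated, since $D$ is open in it. By Reid's theorem, $G/O_{p'}(G)$ is then virtually pro-$p$. Take $N_0=M\cap O_{p'}(G)$ with $\varphi_M(b)\neq 0$. For pro-$p'$ $N$, the principal block idempotent of $\bigO\db{N}$ cuts out a summand of $\bigO\db{G}$ mapping isomorphically onto $\bigO\db{G/N}$, and $b$ lies in it. No counting of simples is needed.

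\textbf{How your own route could be completed.} Tate's $p$-nilpotency criterion would do it.
\begin{enumerate}
\item For small $N$, $\bar D=DN/N$ is a defect group of $b_N$.
\item $b_N$ covers the principal block of $\bar N_1=N_1/N$. So by Proposition~\ref{prop:defectgroupintersection}, $\bar D\cap\bar N_1$ is a Sylow $p$-subgroup of $\bar N_1$, and it lies in $\Phi(\bar D)$.
\item Hence $\bar D$ is Sylow in $\bar N_1\bar D$, and Tate's theorem makes $\bar N_1$ $p$-nilpotent.
\item $O_{p'}(\bar N_1)$ acts trivially on all $b_N$-modules, because $b_N$ covers its principal block. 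The remaining quotient is a normal $p$-subgroup, so it acts trivially on simples.
\item So every simple $b_N$-module is inflated from $G/N_1$. Combined with \cite[Lemma~4.5]{JohnRicardoBrauer1}, which places any finite set of simple $b$-modules in a single $b_N$, the number of simples is bounded by that of $\bigO[G/N_1]$.
\end{enumerate}

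\textbf{A smaller point on $N_0$.} Your construction is only gestured at (``Schur--Zassenhaus-type arguments''), and no such argument is needed. $K$ is itself pro-$p'$: a nontrivial procyclic pro-$p$ subgroup cannot act trivially on the nonzero projective module $\bigO\db{G}b$. So $N_0=K$ works once $G/K$ is known to be virtually pro-$p$. Your final identification of the isomorphism and the defect group then goes through as you describe.
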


    \begin{proof}
        Pick a Sylow $p$-subgroup $S\leq G$ containing $D$. Since $D$ has finite index in $S$, the group $S$ is also topologically finitely generated. By \cite[Corollary 2.2.2]{ColinReidThesis} (or \cite[Corollary~2.4.7]{CapraceReidEtAL}) this implies that the quotient $G/O_{p'}(G)$ is virtually pro-$p$. We also know that there is an $M\unlhd_O G$ such that $\varphi_M:\ \bigO \db{G} \twoheadrightarrow \bigO [G/M]$ does not map $b$ to zero. We define $N_0=M\cap O_{p'}(G)$ and let $N$ be any closed normal subgroup of $G$ which is open in $N_0$. It is clear that $G/N$ is virtually pro-$p$.  Furthermore, since $N$ is a pro-$p'$ subgroup of $G$, the natural map $\varphi_N$ restricts to an isomorphism $\bigO \db{G}b_N\longrightarrow \bigO \db{G/N}$, where $b_N$ denotes the principal block idempotent of $\bigO \db{N}$ (this follows from the analogous claim for finite groups, which is elementary -- a proof will appear in \cite{JohnPeterExtendedBrauerUnfinished}). As $b_N$ is clearly $G$-stable it follows that $b_N\in Z(\bigO \db{G})$ and therefore either $b_Nb=b$ or $b_N b=0$. In the latter case the image of $b$ in $\bigO\db{G/N}$ would be zero, but we made sure that it is not. So $b_Nb=b$ and $\varphi_N$ restricts to an isomorphism $\bigO\db{G} b \longrightarrow \bigO\db{G/N}\varphi_N(b)$, as required.
        
        Since the isomorphism
        $
         \bigO\db{G}b \longrightarrow \bigO\db{G/N}\varphi_N(b) 
        $
        is clearly $G$-equivariant if we let $G$ act by conjugation, we can apply Proposition~\ref{prop:defectgroupmaps} to it and its inverse. It follows that $\bigO\db{G}b$ and $\bigO\db{G/N}\varphi_N(b)$ both have defect group $D$ as a $G$-algebra. Using Proposition~\ref{prop:defectgrpmodulopprime} it follows that $\bigO\db{G/N}\varphi_N(b)$ has defect group $DN/N\cong D$ as a $G/N$-algebra. This completes the proof.
    \end{proof}

    Combining the two previous theorems, one obtains:

    \begin{corol}\label{corol:D_fg_implies_finite_simples}
        Let $\bigO\db{G}b$ be a block with defect group $D\leq G$.
        If $D$ is topologically finitely generated, then $\bigO\db{G}b$ has only finitely many isomorphism classes of simple modules.
    \end{corol}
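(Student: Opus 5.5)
The plan is to chain Theorem~\ref{theorem:reduction_to_virtually_pro_p} with the easy direction of Theorem~\ref{theorem:FiniteSimplesIFFBlockOfvProp}. Since $D$ is topologically finitely generated, Theorem~\ref{theorem:reduction_to_virtually_pro_p} gives a closed normal pro-$p'$ subgroup $N_0\unlhd G$. I will take $N=N_0$ itself, which is trivially a closed normal subgroup of $G$ that is open in $N_0$. The theorem then tells us two things: $H:=G/N$ is virtually pro-$p$, and $\varphi_N$ restricts to an isomorphism of pseudocompact algebras $\bigO\db{G}b\cong\bigO\db{H}\varphi_N(b)$.

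Next, $\varphi_N(b)$ is a block idempotent of $\bigO\db{H}$. It is nonzero, and it is a central idempotent because $\varphi_N$ is a surjective algebra map. Its primitivity in $Z(\bigO\db{H})$ follows from the isomorphism: $Z(\bigO\db{H}\varphi_N(b))\cong Z(\bigO\db{G}b)$, which has no nontrivial idempotents. In any case, for counting simples we only need that $\bigO\db{G}b$ is isomorphic to a direct factor $\bigO\db{H}\varphi_N(b)$ of $\bigO\db{H}$.

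Now the simple pseudocompact $\bigO\db{G}b$-modules correspond to the simple modules of $\bigO\db{H}\varphi_N(b)$. These are among the simple $\bigO\db{H}$-modules. As argued in the first paragraph of the proof of Theorem~\ref{theorem:FiniteSimplesIFFBlockOfvProp}, those are exactly the simple modules of the finite group $H/O_p(H)$, because $O_p(H)$ is open in $H$. Hence there are only finitely many isomorphism classes of simple $\bigO\db{G}b$-modules.

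I expect no real obstacle. The only point needing care is that isomorphic pseudocompact algebras have the same simple pseudocompact modules up to isomorphism, and this is immediate by transport of structure. It also helps to note, as in the proof of Theorem~\ref{theorem:FiniteSimplesIFFBlockOfvProp}, that simple pseudocompact modules are discrete and finite-dimensional, so no topological subtlety arises.
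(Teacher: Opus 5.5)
Your proposal is correct and follows the paper's own argument. The paper obtains the corollary by combining Theorem~\ref{theorem:reduction_to_virtually_pro_p}, which realises the block as a block of a virtually pro-$p$ quotient, with the easy direction of Theorem~\ref{theorem:FiniteSimplesIFFBlockOfvProp}; your choice $N=N_0$ and your transport-of-structure remark just fill in the routine details.
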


\begin{remark}\label{remark:vpropeasier}
     Being precise about which blocks of profinite groups are blocks of virtually pro-$p$ groups may have representation theoretic applications, because the modular representation theory of virtually pro-$p$ groups seems to be considerably more tractable than the representation theory of arbitrary profinite groups -- cf.\ for instance \cite{JohnModReps, JohnGreenCorr} where basic results like Green's Indecomposability Theorem and Green Correspondence are proved only for virtually pro-$p$ groups.  The same sort of situation will manifest itself in Theorem \ref{thm:normal-defect}, whose proof works only for virtually pro-$p$ groups.  Theorem \ref{theorem:reduction_to_virtually_pro_p} says that this does not limit the generality of our main Theorem \ref{thm:reduction} and its corollary: that Donovan's Conjecture holds for blocks of profinite groups with defect group $\Z_p^n$.
\end{remark}

\begin{defn}
    We say a basic pseudocompact $k$-algebra is \emph{finitely presented} if it is isomorphic to 
    $k\db{Q}/I$ for a finite quiver $Q$ and a finitely generated relation ideal $I$.
\end{defn}

We will now provide a sufficient condition for finite presentability of the basic algebra of a block. We do this over $k$. This will involve reproving some elementary results on finite-dimensional algebras in the pseudocompact setting. Whenever it matters, we will be explicit about the fact that we are working in the category $\PC(A)$ of pseudocompact modules over a pseudocompact ring $A$. In some instances it does not matter, though, such as for finite generation of modules and ideals. 
Also, by \cite[Proposition~3.2]{IusenkoMacQuarrieSemisimple} the ``topological Jacobson radical'' $J(A)$ of a pseudocompact $k$-algebra $A$ is the same as its Jacobson radical as a $k$-algebra. In cases where it does make a difference, we will either specify the category or add the adverb ``topologically''. For instance, ``topologically hereditary'' means that all \emph{closed} submodules of projective modules are projective.  

We require the following result only for finite quivers, but since it is true for arbitrary quivers (see e.g.\ \cite[Definition 2.1]{IusenkoMacQuarrieExtensions} for the definition of $k\db{Q}$), we state it in this greater generality.

\begin{prop}\label{prop:kQhereditary}
    Let $Q$ be a quiver. Then $k\db{Q}$ is topologically hereditary.
\end{prop}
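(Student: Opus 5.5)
The plan is to show that every closed left ideal of $A=k\db{Q}$ is projective in $\PC(A)$. This suffices: every projective pseudocompact module is a direct summand of a product of modules of the form $Ae_i$, with $e_i$ ranging over the vertex idempotents. Closed submodules of products are harder to control directly, so I will instead establish the equivalent homological condition $\Ext^2_{\PC(A)}(M,-)=0$ for all $M$, i.e.\ that every simple module has projective dimension at most one. In pseudocompact categories, projective dimension can be tested on simples, because $\Ext$ commutes with the relevant inverse limits: every pseudocompact module is an inverse limit of finite length modules, and Ext out of an inverse limit is controlled by the finite length pieces. It therefore suffices to show that each simple module $S_i$, corresponding to vertex $i$, has projective dimension at most one.

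First I will write down an explicit projective resolution of $S_i$:
$$0 \longrightarrow \prod_{\alpha:\, i\to j} A e_j \stackrel{\partial}{\longrightarrow} A e_i \longrightarrow S_i \longrightarrow 0,$$
where $\partial$ sends the $\alpha$-component $x$ to $x\alpha$. The product is used rather than the sum because there may be infinitely many arrows, and products of projectives are projective in $\PC(A)$. One adjusts the left/right conventions depending on how paths are composed. The kernel of $Ae_i\to S_i$ is $J(A)e_i$, the closed span of paths of positive length starting at $i$. Every such path factors uniquely as $p\alpha$ with $\alpha$ its first arrow. This shows that $\partial$ is surjective onto $J(A)e_i$ and injective.

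Next I must check that $\partial$ is continuous and well defined on the product, and that its image is closed. Both follow from compactness: continuous maps between pseudocompact modules have closed image, and the images of the components tend to zero, so the map on the product makes sense. The most concrete route is to work on each finite-dimensional quotient $k\db{Q}/(\text{paths of length}\ge n \text{ plus arrows outside a finite subquiver})$ and pass to the inverse limit. Exactness is then preserved because inverse limits are exact in pseudocompact modules, as in Gabriel's theorem already cited in the proof of Proposition~\ref{prop:defectgroupsoveroarethesame}.

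Finally, from $\operatorname{pd} S_i\le 1$ for all simples I will deduce $\operatorname{pd} M\le 1$ for every pseudocompact $M$, using a dimension-shifting and limit argument: $\Ext^2(M,-)$ vanishes on finite length $M$ by induction on length, and then for general $M$ by passing to limits. This gives that the kernel of any projective cover is projective, and hence that closed submodules of projectives are projective. The main obstacle I expect is the passage from simples to arbitrary closed submodules in the pseudocompact setting, namely justifying that $\Ext$ is compatible with the inverse limits involved. If this proves awkward, I would instead argue directly that a closed submodule of $\prod Ae_i$ is topologically free on a suitable set of generating paths, using the unique factorization of paths.
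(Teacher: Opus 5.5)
Your proposal is correct and follows essentially the paper's route. Global dimension is controlled by the simples (the paper works with $\Sigma=k\db{Q}/J(k\db{Q})$), and the radical piece $J(k\db{Q})e_i$ (in the paper, all of $J(k\db{Q})$) is projective because factoring each path by its first arrow $\alpha$ gives $k\db{Q}\alpha\cong k\db{Q}e_j$, where $e_j\alpha=\alpha$.

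The one real difference is how infinite quivers are handled. The paper proves $J=\bigoplus_\alpha k\db{Q}\alpha$ only for finite $Q$ and passes to arbitrary $Q$ via $J(k\db{Q})=\invlim J(k\db{R})$ and Brumer's Corollary~3.3. You instead treat infinite quivers directly with the product $\prod_{\alpha} k\db{Q}e_j$, using that products of projectives are projective in $\PC(A)$.

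Your limit step from simples to arbitrary $M$ is cleanest when the second variable is a simple module $S$. Use $\Ext^2(\invlim M_\lambda,S)=\varinjlim\Ext^2(M_\lambda,S)$ together with the fact that $\operatorname{pd}M\le 1$ if and only if $\Ext^2(M,S)=0$ for all simple $S$.
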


\begin{proof}
This can be obtained by duality from the analogous result for coalgebras \cite[Theorem 4]{ChinPathCoalgebras}, but we give a direct proof here for the reader's convenience.

    Just as with finite-dimensional algebras, the global dimension of $k\db{Q}$ is the projective dimension of the topologically semisimple $k\db{Q}$-module $\Sigma = k\db{Q}/J(k\db{Q})$.  We have a short exact sequence 
    $$
        0\longrightarrow J(k\db{Q}) \longrightarrow k\db{Q} \longrightarrow \Sigma \longrightarrow 0,
    $$
    so we need only check that $J(k\db{Q})$ is projective as a left $k\db{Q}$-module.  We have $k\db{Q} = \invlim k\db{R}$ where $R$ runs through the finite subquivers of $Q$, and $J(k\db{Q}) = \invlim J(k\db{R})$.  So by \cite[Corollary 3.3]{Brumer}, it is enough to check our claim supposing that $Q$ is finite.  
    
    In this case, as a left $k\db{Q}$-module, $J(k\db{Q}) = \bigoplus k\db{Q}\alpha$ as $\alpha$ runs through the arrows of $Q$.  But $k\db{Q}\alpha$ is projective, because if $e$ is the vertex idempotent such that $e\alpha = \alpha$, the map $k\db{Q}e\to k\db{Q}\alpha$ sending $x$ to $x\alpha$ is an isomorphism of left $k\db{Q}$-modules. \qedhere
\end{proof}

\begin{prop}\label{prop:ext1}
    Let $A$ be a pseudocompact ring. Then
    $$
        \Ext^1_{\PC(A)}(A/J(A), A/J(A)) \cong \Hom_{\PC(A)}(J(A)/J^2(A), A/J(A)).
    $$
\end{prop}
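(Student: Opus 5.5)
We write $J=J(A)$ and $\Sigma=A/J$, and compute $\Ext^1_{\PC(A)}(\Sigma,\Sigma)$ from the projective presentation
$$
0\longrightarrow J \longrightarrow A \longrightarrow \Sigma \longrightarrow 0,
$$
which lives in $\PC(A)$ because $J$ is closed. Applying $\Hom_{\PC(A)}(-,\Sigma)$ gives the exact sequence
$$
\Hom_{\PC(A)}(A,\Sigma)\xrightarrow{\ \rho\ }\Hom_{\PC(A)}(J,\Sigma)\longrightarrow \Ext^1_{\PC(A)}(\Sigma,\Sigma)\longrightarrow 0 ,
$$
where $\rho$ is restriction and the final zero comes from projectivity of $A$ in $\PC(A)$. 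So $\Ext^1_{\PC(A)}(\Sigma,\Sigma)\cong \operatorname{coker}\rho$, and the proof reduces to two claims: $\rho=0$, and $\Hom_{\PC(A)}(J,\Sigma)=\Hom_{\PC(A)}(J/J^2,\Sigma)$.

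For $\rho=0$: a continuous $A$-homomorphism $f:A\to\Sigma$ is determined by $f(1)=s$, and $f(x)=xs$. For $x\in J$ we get $xs\in J\Sigma=0$, since $\Sigma=A/J$ is annihilated by $J$ on the left. Hence $f|_J=0$.

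For the second claim, any $g\in\Hom_{\PC(A)}(J,\Sigma)$ satisfies
$$
g(J\cdot J)=J\,g(J)\subseteq J\Sigma=0 .
$$
So $g$ vanishes on the abstract product $J\cdot J$. Since $g$ is continuous and $\Sigma$ is Hausdorff, the kernel of $g$ is closed, so $g$ also vanishes on the closure $J^2$ (this is exactly the paper's convention for $J^2$). Therefore $g$ factors uniquely through $J\to J/J^2$. Here $J/J^2$ is a pseudocompact module, being a quotient by a closed submodule, and the induced map is continuous. Conversely, composing with the projection $J\to J/J^2$ sends $\Hom_{\PC(A)}(J/J^2,\Sigma)$ injectively into $\Hom_{\PC(A)}(J,\Sigma)$, so the two Hom groups are identified. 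Combining the two claims gives the stated isomorphism.

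The main point needing care is the first step: that $\Ext^1_{\PC(A)}$ can be computed from a projective resolution in the first variable. This holds because $\PC(A)$ is an abelian category with enough projectives and $A$ is projective in it (Brumer). The remaining subtlety is that $J\cdot J$ need not be closed, which is handled by the continuity argument above.
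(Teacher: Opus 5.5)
Your proof is correct and follows the paper's argument. Like the paper, you apply $\Hom_{\PC(A)}(-,A/J(A))$ to $0\to J(A)\to A\to A/J(A)\to 0$, note that the restriction map out of $\Hom_{\PC(A)}(A,A/J(A))$ is zero, and identify $\Hom_{\PC(A)}(J(A),A/J(A))$ with $\Hom_{\PC(A)}(J(A)/J^2(A),A/J(A))$; your continuity argument for vanishing on the closure $J^2(A)$ fills in a detail the paper leaves implicit.
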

\begin{proof}
    After applying $\Hom_{\PC(A)}(-,A/J(A))$ to the short exact sequence 
    $0\longrightarrow J(A)\longrightarrow A\longrightarrow A/J(A)\longrightarrow 0$ we get the following (part of a) long exact sequence:
    $$
        \Hom_{\PC(A)}(A, A/J(A))\stackrel{0}{\longrightarrow} \Hom_{\PC(A)}(J(A),A/J(A))\longrightarrow \Ext^1_{\PC(A)}(A/J(A),A/J(A))\longrightarrow 0.
    $$ 
    So $\Ext^1_{\PC(A)}(A/J(A), A/J(A)) \cong \Hom_{\PC(A)}(J(A),A/J(A)) \cong \Hom_{\PC(A)}(J(A)/J^2(A),A/J(A))$.
\end{proof}

The $A$-module $A/J(A)$ is finitely generated, so every short exact sequence of $A$-modules
$$0 \longrightarrow A/J(A)\longrightarrow E\longrightarrow A/J(A)\longrightarrow 0$$
is continuous.  But this does \emph{not} imply that $\Ext^1_{\PC(A)}(A/J(A), A/J(A))$ is the same as the corresponding $\Ext^1$ of abstract modules.  For example, consider as in \cite[Example 2.14]{IusenkoMacQuarrieSemisimple} the power series algebra in countably many variables $A = k\db{x_1, x_2, \hdots}$ and the element $y = \sum_{i=1}^{\infty}{x_i}^2 \in J^2\setminus J\cdot J$, where $J\cdot J$ denotes the abstract product of $J = J(A)$ with itself and $J^2$ denotes its topological closure.  Let $I$ be a vector space complement of $\langle y\rangle$ in $J$ containing $J\cdot J$. Then $A/I$ is a two-dimensional $A$-module that is not pseudocompact: the closure of $I$ contains $y$, and hence is $J$.  It is thus an abstract extension of $k$ by $k$ that is not in $\Ext_{\PC(A)}^1(k,k)$.  However, if $A/J^2(A)$ is finite dimensional, then the abstract and pseudocompact $\Ext^1(A/J,A/J)$'s coincide, because in this case $A/J$ has a projective presentation by finitely generated projective modules, which is thus also a projective presentation of $A/J$ by abstractly projective modules, so the $\Ext^1$ groups are the same (cf.\ \cite[Lemma 2.4]{JohnPavelPeterInfGen}).

\begin{prop}[Criterion for finite presentability]\label{prop:finitelypresented}
    Let $A$ be a basic pseudocompact $k$-algebra. 
     Then $A\cong k\db{Q}/I$ for a \emph{finite} quiver $Q$ and some closed relation ideal $I$ if and only if 
     $$\dim_k (A/J(A))<\infty\quad\textrm{and}\quad\dim_k(\Ext^1_{\PC(A)}(A/J(A),A/J(A)))<\infty.$$ 
    Furthermore, $A$ is finitely presented if and only if, in addition, $\dim_k(\Ext^2_{\PC(A)}(A/J(A),A/J(A)))<\infty$.
\end{prop}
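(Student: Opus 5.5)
The first equivalence is proved by lifting a basis of $J/J^2$ to arrows, and the second by comparing the relation ideal $I$ with $\Ext^2$ via a projective presentation. Write $J=J(A)$ and $\Sigma=A/J$.

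\emph{The case of a finite quiver.} Suppose first $A\cong k\db{Q}/I$ with $Q$ finite and $I\subseteq J^2(k\db{Q})$. Then $J(A)=J(k\db{Q})/I$, so $A/J\cong k^{Q_0}$ is finite dimensional. Also $J/J^2\cong J(k\db{Q})/J^2(k\db{Q})$, because $I\subseteq J^2(k\db{Q})$. The latter space is spanned by the finitely many arrows. By Proposition~\ref{prop:ext1}, $\Ext^1$ is then finite dimensional.

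Conversely, assume both dimensions are finite. Since $A$ is basic and $A/J$ is finite dimensional and semisimple, $A/J\cong k^n$. Lift the primitive idempotents to an orthogonal family $e_1,\dots,e_n$ in $A$; this is standard for pseudocompact algebras, since idempotents lift modulo closed ideals contained in the radical. By Proposition~\ref{prop:ext1}, $\Hom(e_jJ/J^2e_i\text{-part},k)$ is finite dimensional. Since $J/J^2$ is a pseudocompact $A/J$-bimodule with these pieces, $J/J^2$ is finite dimensional. Let $Q$ have $n$ vertices and one arrow $i\to j$ for each element of a basis of $e_j(J/J^2)e_i$. Lift these basis elements to $x_\alpha\in e_jJe_i$. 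The universal property of the completed path algebra gives a continuous homomorphism $\pi:k\db{Q}\to A$.

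\emph{Surjectivity of $\pi$.} This is the step I expect to be the main obstacle. Set $B=\pi(k\db{Q})$; it is closed, being a continuous image of a compact object. We have $B+J=A$ and $B\cap J+J^2\supseteq J$. Then $B+J^m=A$ for all $m$, by induction using that $J^m/J^{m+1}$ is spanned by products of arrows. Here one must argue that the topological closure $J^m$ is the closure of the image of $J^{m-1}\otimes J$, and this needs care about closures. The intersection of the $J^m$ is $0$, since $A$ is pseudocompact with $A/J^2$ finite dimensional, so the $J^m$ are open and form a basis of neighbourhoods of $0$. Hence $B$ is dense and closed, so $B=A$. The kernel $I$ is closed, and $I\subseteq J^2(k\db{Q})$ because $\pi$ induces an isomorphism on $J/J^2$.

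\emph{Finite presentability and $\Ext^2$.} Write $P=k\db{Q}$. The module $\Sigma$ has the minimal projective presentation $A\otimes_P J(P)\to A\to\Sigma$. By Proposition~\ref{prop:kQhereditary}, $J(P)$ is projective over $P$, so as in the classical case one obtains
\[
\Ext^2_{\PC(A)}(\Sigma,\Sigma)\cong \Hom_{\PC(A)}\bigl(I/(J(P)I+IJ(P)),\Sigma\bigr).
\]
To see this, take the minimal presentation $0\to I\to P\to A\to 0$, tensor with $\Sigma$ and compute with the projective cover of $J(A)$, namely $\bigoplus_\alpha Ae_{s(\alpha)}$, whose kernel is $I/IJ(P)$ modulo the appropriate terms. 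Alternatively, apply the standard Bongartz argument in the pseudocompact category, which works because all the objects involved are pseudocompact and inverse limits are exact. Hence $\Ext^2$ is finite dimensional if and only if $I/(J(P)I+IJ(P))$ is finite dimensional. Finally, a topological Nakayama lemma for the closed ideal $I$ shows the following: finitely many elements of $I$ spanning $I$ modulo $J(P)I+IJ(P)$ generate $I$ as a closed ideal, and hence as an abstract ideal, because finitely generated ideals in pseudocompact algebras are closed. Conversely, finite generation of $I$ makes this quotient finite dimensional.
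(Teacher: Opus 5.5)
Your proof is correct. For the second equivalence it is essentially the paper's argument: the Butler--Bongartz approach, where heredity of $P=k\db{Q}$ gives $\Ext^2_{\PC(A)}(\Sigma,\Sigma)\cong\Hom_{\PC(A)}(I/(J(P)I+IJ(P)),\Sigma)$, followed by Nakayama. The only difference there is cosmetic. You dimension-shift through $\Omega^2\Sigma=I/IJ(P)$, the kernel of the projective cover $J(P)/IJ(P)\to J(A)$. This needs only that $J(P)$ is projective, and $I\subseteq J(P)^2$ makes the restriction map from $\Hom(J(P)/IJ(P),\Sigma)$ vanish. The paper instead writes out $IJ/I^2J\to I/I^2\to J/IJ\to k\db{Q}/I\to k\db{Q}/J\to 0$, which also uses that $I$ and $IJ$ are projective. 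Your phrase ``modulo the appropriate terms'' should simply say: the kernel is exactly $I/IJ(P)$, and $\Hom_A(I/IJ(P),\Sigma)=\Hom_A(I/(J(P)I+IJ(P)),\Sigma)$ because $J(A)\Sigma=0$.

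The genuine difference is in the first equivalence. Once $A/J$ and $J/J^2$ are finite-dimensional, the paper just cites Krause--Vossieck, whereas you construct $\pi\colon k\db{Q}\to A$ and prove surjectivity by density. That works, and the closure issue you flag is harmless. The span of the images of paths of length $m$, plus $J^{m+1}$, is closed (a finite-dimensional subspace plus a closed one). By induction it contains $J^{m-1}\cdot J$, hence also its closure $J^m$. It follows that $B+J^m=A$ and $\dim_k A/J^m<\infty$, so the $J^m$ are open. Every open ideal $U$ contains some $J^m$, since $J$ maps into the nilpotent radical of $A/U$. Thus $B$ is dense, and it is closed because it is linearly compact (not compact, since $k$ is discrete and infinite). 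Your route is self-contained and makes the arrows and the inclusion $\ker\pi\subseteq J^2(k\db{Q})$ explicit; the citation is merely shorter.

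One remark in your last step should be deleted. It is not true that finitely generated two-sided ideals of a pseudocompact algebra are closed as abstract ideals. Take $Q$ with one vertex and two loops $x,y$, and $z=\sum_{n\ge 0}y^nxy^n$; then $z$ lies in the closed ideal generated by $x$ but is not a finite sum $\sum_j a_jxb_j$. To see this, write each element of that closed ideal uniquely as $\sum_i y^ixf_i$ and reduce each $f_i$ modulo $x$ into $k\db{y}$. For a finite sum $\sum_j a_jxb_j$, every reduced $f_i$ lies in the span of the reductions of the $b_j$, whereas for $z$ they are all the powers $y^i$. Finite generation of a closed relation ideal must therefore be read topologically, i.e.\ as a module over $P\widehat\otimes P^{\mathrm{op}}$. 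The paper's Nakayama step also needs this, since its quotient $M$ must be pseudocompact. With that reading your argument is complete, and the words ``hence as an abstract ideal'' are simply unnecessary.
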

\begin{proof}
    By Proposition~\ref{prop:ext1}, $\Ext^1_{\PC(A)}(A/J(A),A/J(A))$ being finite-dimensional implies that $J(A)/J^2(A)$ is finite-dimensional. If $A/J(A)$ and $J(A)/J^2(A)$ are both finite-dimensional, then by \cite[Section 6]{KrauseVossieck} we have $A\cong k\db{Q}/I$ for some finite quiver $Q$ and a closed ideal $I\subseteq J^2(k\db{Q})$. Conversely, if $A\cong k\db{Q}/I$ for finite $Q$ and a closed ideal $I$, then $A/J(A)$ and $J(A)/J^2(A)$ are finite-dimensional, so by Proposition~\ref{prop:ext1}, so is $\Ext^1_{\PC(A)}(A/J(A),A/J(A))$. This proves the first assertion.

    For the second part, we adapt an argument from \cite{Bongartz} (attributed to Butler). Assume that $A=k\db{Q}/I$ for a finite quiver $Q$ and a closed ideal $I\subseteq J^2$, where $J=J(k\db{Q})$. We need to show that $I$ is finitely generated as a two-sided ideal if and only if $\Ext^2_{\PC(A)}(A/J(A),A/J(A))$ is finite-dimensional.  The sequence of inclusions $I^2J\subseteq I^2\subseteq IJ\subseteq I\subseteq J$ yields naturally the sequence of pseudocompact $A$-modules
    $$
        IJ/I^2J \stackrel{\alpha}{\longrightarrow} I/I^2 \stackrel{\beta}{\longrightarrow} J/IJ \longrightarrow k\db{Q}/I \longrightarrow k\db{Q}/J\longrightarrow 0,
    $$
    which is exact by construction.
    Since $k\db{Q}$ is topologically hereditary (Proposition \ref{prop:kQhereditary}), $J$, $I$ and $IJ$ are projective pseudocompact $k\db{Q}$-modules.  If $P$ is a projective $k\db{Q}$-module, then $P/IP$ is a projective $k\db{Q}/I$ module, hence $J/IJ$, $I/I^2$ and $IJ/I^2J$ are projective pseudocompact $A = k\db{Q}/I$-modules. So this is the start of a projective resolution of $k\db{Q}/J$ as a left $A$-module. We can apply $\Hom_{\PC(A)}(-, k\db{Q}/J)$ to it, and, given that $I\subseteq J^2$ implies $\beta^*=0$, we have
    $$
        \Ext^2_{\PC(A)}(k\db{Q}/J,k\db{Q}/J) = \Ker(\alpha^*)= \Hom_{\PC(A)}(I/(IJ+JI), k\db{Q}/J).
    $$  
    The vector space $\Hom_{\PC(A)}(I/(IJ+JI), k\db{Q}/J)$ is finite-dimensional if and only if $I/(IJ+JI)$ is. Clearly, if $I$ is finitely-generated as a $k\db{Q}$-bimodule, then $I/(IJ+JI)$ is finitely-generated as a $k\db{Q}/J$-bimodule, making it finite-dimensional. Conversely, if $I/(IJ+JI)$ is finite-dimensional, then let $x_1,\ldots,x_n\in I$ be the preimages of a basis. Define $M=I/(k\db{Q}x_1k\db{Q}+\ldots+k\db{Q}x_nk\db{Q})$ so that $M/(JM+MJ)=0$. We now simply use Brumer's version of Nakayama's lemma \cite[Lemma~1.4]{Brumer} twice: $M/(JM+MJ)=0$ implies that $M/MJ=0$, which in turn implies that $M=0$. That is, $I$ is finitely generated.
\end{proof}

\begin{theorem}
    Let $k\db{G}b$ be a block of a profinite group $G$ with defect group $D$, and let $A$ denote a basic algebra of $k\db{G}b$. 
    \begin{enumerate}
        \item If $D$ is finitely generated as a pro-$p$ group, then $A\cong k\db{Q}/I$ for a finite quiver $Q$ and a closed relation ideal $I$.
        \item If $D$ is finitely presented as a pro-$p$ group, then $A$ is finitely presented.
    \end{enumerate} 
\end{theorem}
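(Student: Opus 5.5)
The plan is to verify the two cohomological conditions of Proposition~\ref{prop:finitelypresented} for $A$. Since $A$ is Morita equivalent to $k\db{G}b$, the relevant $\Ext$-groups agree: $\Ext^n_{\PC(A)}(A/J(A),A/J(A))$ is the direct sum, over pairs of simple $k\db{G}b$-modules $S,T$, of $\Ext^n_{\PC(k\db{G})}(S,T)$. By Corollary~\ref{corol:D_fg_implies_finite_simples} there are only finitely many simples when $D$ is finitely generated, so $\dim_k A/J(A)<\infty$. It therefore suffices to bound $\dim_k\Ext^n_{\PC(k\db{G})}(S,T)$ for $n=1$ (for part (1)) and for $n=1,2$ (for part (2)). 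By Theorem~\ref{theorem:reduction_to_virtually_pro_p} (which works equally over $k$) we may assume $G$ is virtually pro-$p$; the defect group is unchanged up to isomorphism.

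The first key step is relative projectivity. Choose an open normal pro-$p$ subgroup $N\unlhd_O G$ and set $U=ND$. Since $\bar b\in\oTr_D^G$, we have $\bar b=\Tr_U^G(x)$ for some $x\in (k\db{G}\bar b)^{U}$. The standard Higman-type argument, with $[G:U]$ finite so that everything is an honest finite trace, shows that every pseudocompact $k\db{G}\bar b$-module $M$ is a direct summand of $k\db{G}\otimes_{k\db{U}}M$. Applying this to $S$, together with Eckmann--Shapiro for the open subgroup $U$ (induction and coinduction agree), gives that $\Ext^n_{\PC(k\db{G})}(S,T)$ is a direct summand of
$$\Ext^n_{\PC(k\db{U})}(S,T)\cong H^n(U,S^*\otimes_k T).$$
Here $S^*\otimes T$ is finite-dimensional, which is legitimate because simples are discrete and hence finite-dimensional.

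The second step is to bound $H^n(U,S^*\otimes T)$. The group $U\le NS'$ for a Sylow $p$-subgroup $S'\supseteq D$, so $U$ is pro-$p$. Moreover $D$ is open in $U$, because $D$ is open in any Sylow subgroup containing it. A finite-dimensional module $V$ over the pro-$p$ group $U$ has a composition series with all factors trivial (isomorphic to $k$). The long exact sequence then gives
$$\dim H^n(U,V)\le \dim V\cdot\dim H^n(U,k)=\dim V\cdot\dim_{\F_p}H^n(U,\F_p).$$
Now $\dim H^1(U,\F_p)$ is the minimal number of generators of $U$, and, once $U$ is finitely generated, $\dim H^2(U,\F_p)$ is the minimal number of relations. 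An open subgroup of a finitely generated (respectively finitely presented) pro-$p$ group is again finitely generated (respectively finitely presented); here one applies this to $D\le U$, with $D$ of finite index in $U$, in the direction from $D$ to $U$. Going up is easy for generation: add finitely many coset representatives. For presentation, I would instead apply Shapiro once more, writing $H^n(U,\F_p)\hookrightarrow H^n(D,\F_p)$. This holds because $[U:D]$ may be divisible by $p$, so rather than using an injectivity argument I would redo the first step with $D$ itself in place of $U$.

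The main obstacle is precisely this last point: passing from $U=ND$ to $D$. The cleanest route is to prove the relative projectivity statement directly with respect to $D$, namely that $S$ is a summand of $k\db{G}\hat\otimes_{k\db{D}}S$, using $\bar b\in\bigcap_N\Tr_{ND}^G$ and a compactness/inverse-limit argument over $N$ in the spirit of Corollary~\ref{corol:blockidempotents}. Then Shapiro's lemma for closed subgroups gives $\Ext^n_G(k\db{G}\hat\otimes_{k\db{D}}S,T)\cong H^n(D,S^*\otimes T)$, and the dimension bound follows exactly as above with $D$ in place of $U$. Failing that, I would fall back on the finite-index inheritance of finite presentability for the pro-$p$ groups $D\le U$.
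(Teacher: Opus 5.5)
Your reductions, the Higman-criterion step and the resulting bound are all correct, so part (1) is fine. The Higman step says that each simple $S$ is a summand of $k\db{G}\otimes_{k\db{U}}S$, because $\bar b=\Tr_U^G(x)$ with $x\in(k\db{G}\bar b)^U$ and $U=ND$. This gives $\dim_k\Ext^n_{\PC(k\db{G})}(S,T)\le\dim_k(S^*\otimes_k T)\cdot\dim_{\F_p}H^n(U,\F_p)$. The gap is in part (2), exactly where you suspect it: nothing you write shows that $H^2(U,\F_p)$ is finite. First, the inheritance result you quote (open subgroups of finitely presented groups are finitely presented) runs from $U$ down to $D$, which is the wrong direction. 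Second, $H^2(U,\F_p)\to H^2(D,\F_p)$ is restriction to a subgroup of $p$-power index and need not be injective; already $H^1(\Z_p,\F_p)\to H^1(p\Z_p,\F_p)$ is zero. Third, Shapiro's lemma, $H^n(D,\F_p)\cong H^n(U,\F_p[U/D])$, only bounds $H^n(D,\F_p)$ in terms of $H^n(U,\F_p)$, again the wrong way round.

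The missing step is the two-stage argument the paper uses via Wilson's Propositions~12.2.1 and~12.2.2. Since $D$ is open in the virtually pro-$p$ group $G$, its core $D_0=\bigcap_{g\in G}D^g$ is open and normal in $G$, hence open normal in $U$ and open in $D$.

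Going down is easy. $D_0$ is finitely generated, and $H^2(D_0,\F_p)\cong H^2(D,\F_p[D/D_0])$ is finite because $\F_p[D/D_0]$ has a finite composition series with trivial factors.

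Going up over a finite quotient then works. $U$ is an extension of $D_0$ by the finite $p$-group $U/D_0$, so the Lyndon--Hochschild--Serre spectral sequence bounds $\dim H^2(U,\F_p)$ by the sum of the dimensions of $H^2(U/D_0,\F_p)$, $H^1(U/D_0,H^1(D_0,\F_p))$ and $H^2(D_0,\F_p)$.

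With this inserted your proof is complete, but it is then no shorter than the paper's. The paper never uses the trace condition here. It restricts to a Sylow $p$-subgroup $P\supseteq D$, which is open of $p'$-index, so restriction on $\Ext$ is injective by Brumer's Lemma~4.7. It uses $D$ only to see that $P$ is finitely generated or finitely presented.

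Your alternative of working with $D$ itself would genuinely avoid this group theory, but it is not a routine compactness argument. For open normal $N$ acting trivially on $S$, the splittings of $k\db{G}\otimes_{k\db{ND}}S\to S$ form a non-empty coset of a subspace of the finite-dimensional space $\Hom_{k\db{G}}(S,k\db{G}\otimes_{k\db{ND}}S)$. These cosets are typically infinite sets, so you need linear compactness, not finiteness, to choose a compatible family. That family gives a splitting of $k\db{G}\widehat\otimes_{k\db{D}}S=\varprojlim_N k\db{G}\otimes_{k\db{ND}}S\to S$. Shapiro's lemma for the closed subgroup $D$ then bounds everything by $H^n(D,\F_p)$, which finishes the job.
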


\begin{proof}
    Write $A=k\db{G}b$ and $J=J(k\db{G}b)$.
    By Theorem~\ref{theorem:reduction_to_virtually_pro_p} we can assume that $G$ is virtually pro-$p$, and by Corollary \ref{corol:D_fg_implies_finite_simples}, $A/J$ is finite-dimensional. Let $S$ denote a Sylow $p$-subgroup of $G$ containing $D$. If $D$ is finitely generated then so is $S$, since $D$ has finite index in it. 
    If $D$ is finitely presented, then so is the open normal subgroup $\bigcap_{g\in G} D^g$ by \cite[Proposition~12.2.1]{WilsonProfiniteGroups},
    so that $S$ is finitely presented by \cite[Proposition~12.2.2]{WilsonProfiniteGroups}.
    Now \cite[Theorems~7.8.1~and~7.8.3]{RibesZalesskiiProfiniteGroups} imply that if $S$ is finitely generated then  $H^1(S, \F_p)$ is finite and if $S$ is finitely presented then $H^2(S, \F_p)$ is finite as well. 
    
    Fix some $i>0$ and assume that $H^i(S,\F_p)$ is finite. We have
    $$
        H^i(S,k) = \varinjlim_{U\unlhd_O S} H^i(S/U, k) \cong  \varinjlim_{U\unlhd_O S} H^i(S/U, \F_p)\otimes k \cong (\varinjlim_{U\unlhd_O S} H^i(S/U, \F_p))\otimes k \cong H^i(S,\F_p)\otimes k,
    $$
    where we have used \cite[Proposition~8 of Chapter 1]{SerreGaloisCohomology} to write $H^i(S,k)$ as a direct limit, as well as the fact that tensor products commute with direct limits. Of course, for the latter we also need that the isomorphism $H^i(S/U, \F_p)\otimes k\longrightarrow H^i(S/U, k):\ [\alpha]\otimes x \mapsto [x\alpha]$ commutes with the maps of the inverse systems, which is elementary if one represents cocycles as maps from $S/U\times \ldots \times S/U$ into $\F_p$ or $k$. It follows that $H^i(S,k)\cong \Ext^i_{\PC(k\db{S})}(k,k)$ is finite-dimensional.

    For any short exact sequence $0\to M \to N \to K \to 0$ of finite-dimensional $k\db{S}$-modules and any finite-dimensional $k\db{S}$-module $L$ we get exact sequences 
    $$
        \Ext^i_{\PC(k\db{S})}(L, M) \to \Ext^i_{\PC(k\db{S})}(L, N) \to \Ext^i_{\PC(k\db{S})}(L, K)
    $$
    and 
    $$
     \Ext^i_{\PC(k\db{S})}(K, L) \to \Ext^i_{\PC(k\db{S})}(N, L)\to \Ext^i_{\PC(k\db{S})}(M, L).
    $$
    In particular, the middle terms are finite-dimensional if the outer terms are. By induction over the length of a module, we get that $\Ext^i_{\PC(k\db{S})}(M,N)$ is finite-dimensional for any two finite-dimensional $k\db{S}$-modules $M$ and $N$.

    Denoting as usual by $\Res^G_S$ the restriction of a $k\db{G}$-module to $k\db{S}$, it now follows that $\Ext^i_{\PC(k\db{S})}(\Res^{G}_S A/J, \Res^G_S A/J)$ is finite-dimensional. By \cite[Lemma~4.7]{Brumer} we have an embedding 
    $$
        \Ext^i_{\PC(A)}(A/J,A/J)\hookrightarrow \Ext^i_{\PC(k\db{S})}(\Res^G_S A/J, \Res^G_S A/J)
    $$
    and thus $\Ext^i_{\PC(A)}(A/J,A/J)$ is finite-dimensional. We can now apply Proposition~\ref{prop:finitelypresented}, using that the above implies that $\dim_k(\Ext^1_{\PC(A)}(A/J,A/J))<\infty$ if $D$ is finitely generated and $\dim_k(\Ext^2_{\PC(A)}(A/J,A/J))<\infty$ if $D$ is finitely presented.
\end{proof}

\section{Morita reductions and blocks with defect $\Z_p^n$}\label{section:moritareductions}

In this section we will generalise two well-known reductions for Donovan's conjecture to profinite groups. These reductions work for arbitrary blocks of profinite groups with arbitrary defect groups. However, our concrete application to blocks with defect group $\Z_p^n$ relies on the results of the previous section which reduce to the case of blocks of virtually pro-$p$ groups. Lemma~\ref{lemma:group-theory-caseAB} below is the key group theoretic observation that makes it possible to reduce to blocks with normal defect groups in that setting. Unlike in the finite case, the classification of finite simple groups plays no role whatsoever in the proof of Donovan's Conjecture for blocks with defect group $\Z_p^n$ (Corollary \ref{corol:DonovanforZpn}). The arguments resemble those used for blocks of finite $p$-solvable groups \cite{KuelshammerpSolvable}, rather than those used for blocks with finite abelian defect groups. In fact, finite abelian defect groups are much more difficult: Donovan's Conjecture is known to hold only for $p = 2$ (by \cite[Theorem~1.7]{EEL}) and for cyclic defect groups (by Brauer and Dade, cf. \cite[Chapter 11]{LinckelmannBookVol2}), and is only known to hold without explicit use of the classification of finite simple groups for blocks with cyclic or Klein four defect group (the latter being due to Erdmann \cite{ErdmannV4} and Linckelmann \cite{LinckelmannV4}). 

\begin{lemma}\label{lemma:group-theory-caseAB}
    Let $G$ be a virtually pro-$p$ group with an open subgroup $D\cong \Z_p^n$ for some $n\in \N$. Assume that $O_p(G)\subseteq D$. Then one of the following two cases occurs:
    \begin{enumerate}
        \item[(A)] There is a normal subgroup $N\unlhd G$ such that 
            \begin{enumerate}
                \item $N\cap D^g=1$ for all $g\in G$ (in particular $|N|<\infty$), and
                \item $N \supsetneqq O_{p'}(Z(G))$.
            \end{enumerate}
        \item[(B)] $D$ is normal in $G$ and $C_G(D)=D O_{p'}(Z(G))$.
    \end{enumerate}
\end{lemma}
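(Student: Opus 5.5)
The plan is to let everything be controlled by the centraliser of a normal open torsion-free subgroup. First I record that condition (a) in case (A) is automatic for finite normal subgroups. If $N\unlhd G$ is finite, then $N\cap D^g$ is a finite subgroup of $D^g\cong\Z_p^n$, which is torsion-free, so $N\cap D^g=1$. Conversely, if $N\cap D=1$ then $N$ is finite, because $D$ is open. In particular $O_{p'}(Z(G))$ is finite. So case (A) fails exactly when every finite normal subgroup of $G$ is contained in $O_{p'}(Z(G))$. I will assume this and deduce (B).

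Let $K=\bigcap_{g\in G}D^g$. Since $D$ is open, this is a finite intersection, so $K$ is an open normal subgroup of $G$ with $K\cong\Z_p^n$, and $K\subseteq O_p(G)\subseteq D$. Put $C=C_G(K)$, a closed normal subgroup of $G$. Since $D$ is abelian, $D\subseteq C$. Moreover $K\subseteq Z(C)$ and $K$ is open, so $m:=[C:Z(C)]<\infty$.

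The key step is a profinite version of Schur's theorem: $[C,C]$ is finite. To prove it, take open normal subgroups $U\unlhd C$ with $U\subseteq K$. In each finite quotient $C/U$ the centre contains $K/U$, so the centre has index at most $m$. Schur's theorem then bounds $|[C/U,C/U]|$ by a function of $m$ alone. The closed subgroup $[C,C]$ is the inverse limit of these groups, so it is finite of bounded order. It is characteristic in $C$, hence normal in $G$. By our assumption $[C,C]\subseteq O_{p'}(Z(G))\subseteq Z(G)$, so $C$ is nilpotent of class at most $2$. A nilpotent profinite group is pronilpotent, since all its finite quotients are nilpotent. Therefore $C=C_p\times C_{p'}$, the product of its Sylow pro-$p$ and Hall pro-$p'$ subgroups, and both factors are characteristic in $C$, hence normal in $G$.

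Now identify the two factors. $C_{p'}\cap K=1$ and $K$ is open, so $C_{p'}$ is a finite normal subgroup of $G$. By assumption, $C_{p'}\subseteq O_{p'}(Z(G))$. On the other side, $C_p$ is a normal pro-$p$ subgroup of $G$, so $C_p\subseteq O_p(G)\subseteq D$. Since $D$ is a pro-$p$ subgroup of $C$, we also have $D\subseteq C_p$. Hence $D=C_p=O_p(G)$ is normal in $G$. Finally, $C_G(D)\subseteq C_G(K)=C=D\times C_{p'}\subseteq D\,O_{p'}(Z(G))$. The reverse inclusion holds because $D$ is abelian. This gives (B).

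I expect the main obstacle to be justifying the profinite Schur step cleanly, in particular that the bound on $|[C/U,C/U]|$ is uniform in $U$, so that the limit is finite. The remaining steps are routine once $C$ is known to be pronilpotent.
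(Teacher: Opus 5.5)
Your proof is correct, and it takes a somewhat different route from the paper's.

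The paper works with $H=C_G(O_p(G))$ and uses the transfer $v\colon H\to O_p(G)$, $g\mapsto g^{[H:O_p(G)]}$, which is a homomorphism because $O_p(G)$ is central of finite index in $H$. Its kernel $N$ is a single explicit finite normal subgroup of $G$. It is exactly the set of torsion elements of $H$, so it contains $[H,H]$ and every finite subgroup of $H$. The dichotomy is then simply $N\supsetneqq O_{p'}(Z(G))$ versus $N=O_{p'}(Z(G))$. In the second case, $H=QN$ with $N$ central forces the Sylow pro-$p$ subgroup $Q$ of $H$ to be normal, whence $Q=D=O_p(G)$.

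You instead argue contrapositively from ``every finite normal subgroup lies in $O_{p'}(Z(G))$'', and you use that hypothesis twice. First you apply it to $[C,C]$, made finite by Schur's theorem, to get nilpotency and hence $C=C_p\times C_{p'}$. Then you apply it to $C_{p'}$. So the paper's transfer does in one stroke what your Schur step and pronilpotent decomposition do in two, and it hands over the witness for (A) directly. Your version replaces the transfer computation with two standard facts and is arguably more transparent. For the claimed equivalence with the failure of (A), do write down the step that a finite normal $F\not\subseteq O_{p'}(Z(G))$ yields the witness $F\,O_{p'}(Z(G))$.

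The obstacle you anticipate does not actually arise. $Z(C)$ has finite index in $C$ as an abstract group, so Schur's theorem applied to $C$ itself shows that the abstract derived subgroup is finite, hence closed. No uniform bound or inverse-limit argument is needed. A small slip: $K/U\subseteq Z(C/U)$ only bounds the index of the centre by $[C:K]$, not by $m$; it is the image of $Z(C)$ that gives $m$. Either bound is uniform in $U$, so this is harmless.
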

\begin{proof}
    Firstly note that $O_p(G)$ is an open subgroup of $D\cong \Z_p^n$, and therefore $O_p(G)\cong \Z_p^n$.
    We consider the group $H=C_G(O_p(G))$. Note that $O_p(G)$ is a central subgroup of finite index in $H$, and therefore \cite[Theorem~5.6]{isaacs2008finite} implies that the transfer map
    \begin{equation}
        v:\ H \longrightarrow O_p(G):\ g \mapsto g^{[H:O_p(G)]}
    \end{equation}
    is a group homomorphism. Its kernel is a normal subgroup of $H$ and, a fortiori, of $G$ (since both $H$ and $O_p(G)$ are characteristic in $G$). Write $N=\Ker(v)$. Straight from the definition of $v$, $N\cap D^g = 1$ for every $g$, so in particular $N$ is finite.  The normal subgroup $O_{p'}(Z(G))$ of $G$ is clearly contained in $H$, and is finite because $G$ is virtually pro-$p$, so $O_{p'}(Z(G))\subseteq N$ because the codomain of $v$ is torsion-free.
    
    If $O_{p'}(Z(G))\subsetneqq N$, then we have already checked that the conditions of (A) are satisfied, so it remains to consider the case where $O_{p'}(Z(G)) = N$.  The image of $v$ is open in $O_p(G)\cong \Z_p^n$, so isomorphic to $\Z_p^n$, and hence $H$ is a central extension of $\Z_p^n$ by $O_{p'}(Z(G))$.  If $Q$ is a Sylow $p$-subgroup of $H$, then $H = QN$ (by \cite[Theorem 2.3.15]{RibesZalesskiiProfiniteGroups} for instance) so $Q$ is normal (because $N$ is central), hence characteristic.  So $Q\unlhd G$ and hence $Q\subseteq O_p(G)\subseteq D$.  On the other hand, it is clear that $D\subseteq Q$ because $D$, being abelian, is a pro-$p$ group that centralises $O_p(G)\subseteq D$. So $D=Q=O_p(G)$ and hence $D$ is normal in $G$.  Finally,
    \begin{displaymath}
    C_G(D) = H = QN = DO_{p'}(Z(G)).\qedhere
    \end{displaymath}
\end{proof}

Recall that if $A$ is an algebra over a commutative ring and $e\in A$ is an idempotent such that $AeA=A$, then $A$ and $eAe$ are Morita equivalent (see, for instance, \cite[Theorem~2.8.7]{LinckelmannBookVol1}). An idempotent $e$ with this property is called \emph{full}. Now consider a \emph{pseudocompact} algebra $A$ and a full idempotent $e\in A$. By an elementary calculation, the condition $AeA=A$ implies that $eA$ and $Ae$ are finitely generated as left and right $eAe$-modules, respectively, and therefore \cite[Proposition~2.2]{JohnPavelPeterInfGen} implies that $Ae\otimes_{eAe}eA \cong Ae\widehat \otimes_{eAe}eA$ and $eA\otimes_A Ae \cong eA\widehat \otimes_A Ae$. Thus we get isomorphisms of pseudocompact bimodules $Ae\widehat\otimes_{eAe} eA\cong A$ and $eA\widehat\otimes_A Ae\cong eAe$, both given by multiplication, which implies that the functors $eA\widehat{\otimes}_A -$ and $Ae\widehat{\otimes}_{eAe} -$ induce mutually inverse equivalences between the categories of pseudocompact modules $\PC(A)$ and $\PC(eAe)$. That is, both the ordinary module categories and the categories of pseudocompact modules are equivalent. All Morita equivalences in this paper will be of this form, that is, they are Morita equivalences of pseudocompact algebras in the appropriate sense.

\begin{prop}[{Profinite Fong reduction, cf. \cite[Theorem~6.8.3]{LinckelmannBookVol2}}]\label{prop:fong1}
    Let $G$ be a profinite group with a closed normal subgroup $N$. Let $b\in Z(\bigO \db{G})$ and $c\in Z(\bigO \db{N})$ be block idempotents with $bc\neq 0$. Let $H$ denote the stabiliser of $c$ in $G$. Then
    \begin{enumerate}
        \item There is a unique block idempotent $d\in Z(\bigO\db{H})$ such that $b=\Tr_H^G (d)$. We have $bd=d=cd$.
        \item $\bigO\db{G}b$ and $\bigO\db{H}d$ are Morita equivalent. More precisely, $d \bigO\db{G}b d= \bigO\db{H}d$ and $d$ does not annihilate any simple $\bigO\db{G}b$-module.
        \item Every defect group of $\bigO\db{H}d$ is a defect group of $\bigO\db{G}b$.
    \end{enumerate}
\end{prop}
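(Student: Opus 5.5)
The plan is to reduce to the finite Fong reduction \cite[Theorem~6.8.3]{LinckelmannBookVol2} by passing to finite quotients, and then to take inverse limits. By Corollary~\ref{corol:blockidempotents}, $H=\Stab_G(c)$ is open in $G$, so $\Tr_H^G$ is a finite sum. Moreover, there is $M_0\unlhd_O G$ such that, for every $M\unlhd_O G$ contained in $M_0$, we have compatible systems $b_M\in Z(\bigO[G/M])$ and $c_M\in Z(\bigO[N/N\cap M])=Z(\bigO[NM/M])$ for $b$ and $c$. These can be chosen so that $\Stab_G(c_M)=H$ (here $M\supseteq$ nothing extra is needed, since $M\subseteq H$ may be assumed by shrinking $M_0$). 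Since $bc\neq 0$, the images satisfy $b_Mc_M\neq 0$ for all small $M$.

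Fix such an $M$. Applying the finite Fong reduction to $NM/M\unlhd G/M$ gives a unique block $d_M$ of $\bigO[H/M]$ with $b_M=\Tr_{H/M}^{G/M}(d_M)$ and $b_Md_M=d_M=c_Md_M$. It also gives $d_M\bigO[G/M]b_Md_M=\bigO[H/M]d_M$, and every defect group of $d_M$ is a defect group of $b_M$.

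The first real step is compatibility. I would show that $\psi_{M'M}(d_{M'})=d_M$ for $M\subseteq M'$. The image $\varphi_{M'M}(d_M)$ is a central idempotent of $\bigO[H/M']$, and it satisfies
\[
\Tr(\varphi(d_M))=\varphi(b_M)=b_{M'}\ \text{or}\ 0 .
\]
Using $c$-compatibility, it still absorbs $c_{M'}$, and the uniqueness statement of the finite theorem forces the block of $H/M'$ below it to be $d_{M'}$. By Proposition~\ref{prop:idempotentsdirectlimit} (applied to $H$), the $d_M$ then define a block $d\in Z(\bigO\db{H})$. Taking limits gives $\Tr_H^G(d)=b$ and $bd=d=cd$. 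Uniqueness follows because any other such $d'$ maps to $d_M$ for all small $M$ by finite uniqueness.

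For part~(2), the equality $d\bigO\db{G}bd=\bigO\db{H}d$ is an equality of closed subsets. I would check it on images in each $\bigO[G/M]$ and then take inverse limits, which commute with these constructions because the sets involved are images of compact sets under continuous maps. For the claim about simple modules, a simple $\bigO\db{G}b$-module is finite and factors through some $G/M$, so the finite statement applies. Fullness $AdA=A$ then follows as in the finite case, since a two-sided ideal not annihilating any simple module is everything; the closure issue disappears because $AdA$ is finitely generated.

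Part~(3) is the step I expect to be the main obstacle, because defect groups are defined through $\oTr$, which is an intersection over $N'\unlhd_O G$. I would take a defect group $Q$ of $\bigO\db{H}d$ and show $b\in\oTr_Q^G$ via the identity
\[
b=\Tr_H^G(d)\in\Tr_H^G(\oTr_Q^H)\subseteq \oTr_Q^G ,
\]
using transitivity of traces at each open level $N'Q$. This shows that $b$ has a defect group contained in $Q$. For the reverse containment, I would use that $d\,\Tr_{N'P}^G(x)\,d$ decomposes by a Mackey-type argument, with only the double cosets in $H$ surviving because $c$ is not $g$-stable for $g\notin H$. This would give $d\in\oTr_{P^h}^H$ for a defect group $P$ of $b$, forcing $Q$ to be conjugate to a subgroup of $P$, and hence equality. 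Alternatively, I could argue via the finite quotients and \cite{JohnRicardoBrauer1} to get defect groups as inverse limits of finite defect groups.
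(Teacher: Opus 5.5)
\textbf{There is a genuine gap in how you pass from the finite quotients back to $\bigO\db{G}$.} You treat $\varphi_M$ and $\varphi_{M'M}$ as if they sent a block idempotent to a block idempotent or to $0$. In general the image is a \emph{sum} of several blocks. Already for $p=3$, the unique block $1$ of $\bigO[S_3]$ maps to $e_++e_-$ in $\bigO[S_3/C_3]$.

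\begin{itemize}
\item Your compatibility step asserts $\Tr(\varphi_{M'M}(d_M))=\varphi_{M'M}(b_M)\in\{b_{M'},0\}$, which is false.
\item More seriously, compatible systems only give $\varphi_M(b)b_M=b_M$ and $\varphi_M(d)d_M=d_M$. They do not give $\varphi_M(b)=b_M$ or $\varphi_M(d)=d_M$.
\item So the finite identities $\Tr_{H/M}^{G/M}(d_M)=b_M$ and $d_M\bigO[G/M]b_Md_M=\bigO[H/M]d_M$ are not images of the identities you want. ``Taking limits'' proves neither $\Tr_H^G(d)=b$ nor $d\bigO\db{G}bd=\bigO\db{H}d$.
\item The same objection hits your uniqueness argument and your transfer of the simple-module claim to a finite quotient.
\item It also hits ``$b_Mc_M\neq0$ for all small $M$''. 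This needs $b_M,c_M$ chosen at one level with $b_Mc_M\neq 0$ and then propagated.
\end{itemize}

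This can be repaired by hand. From $\varphi_M(dc)d_M=d_M$ get $dc=d$. Deduce that $\Tr_H^G(d)$ is a central idempotent with $b\Tr_H^G(d)\neq0$. Get primitivity of $\Tr_H^G(d)$ from that of $d$.

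The paper uses no finite quotients at all for (1) and (2). Since $H$ is open, \cite[Lemma~6.8.4]{LinckelmannBookVol2} holds verbatim. So $\Tr_H^G$ and multiplication by $c$ are mutually inverse algebra isomorphisms $Z(\bigO\db{H}c)\cong Z(\bigO\db{G}\Tr_H^G(c))$. Since $bc\neq0$ forces $b=b\Tr_H^G(c)$, this gives $d$ and its uniqueness in $Z(\bigO\db{H}c)$. Uniqueness has to be read this way: for $G=S_3$, $N=C_3$, $p=2$, the two non-principal blocks of $\bigO C_3$ have the same trace. Part (2) is then the direct computation $d\bigO\db{G}d=\bigoplus_{gH\in G/H}d\,{}^gd\,g\bigO\db{H}=\bigO\db{H}d$. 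Fullness is just $b=\sum_{gH}gdg^{-1}\in\bigO\db{G}d\bigO\db{G}$.

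\textbf{Part (3).} Your inclusion $b=\Tr_H^G(d)\in\oTr_Q^G$ is correct. It is a cleaner argument for that half than the paper's.

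For the converse, your claim that only double cosets inside $H$ contribute is unjustified. The relation $c\,{}^gc=0$ kills $d\,{}^gd$, but $x$ is an arbitrary $N'P$-fixed element of $\bigO\db{G}b$, so $d\,{}^tx\,d$ need not vanish for $t\notin H$. The claim is also unnecessary:
\begin{itemize}
\item Mackey gives $d=\sum_t\Tr^H_{H\cap{}^t(N'P)}(d\,{}^tx\,d)$ with $d\,{}^tx\,d\in\bigO\db{H}d$.
\item Since $Z(\bigO\db{H}d)$ is local, one summand ideal already contains $d$.
\item You then still need a compactness argument, which your sketch omits. Over the finite sets of admissible double cosets in $H\backslash G/N'P$, as $N'$ shrinks, it produces a single $t\in G$ with $d\in\oTr^H_{H\cap{}^tP}$.
\item Hence $Q$ is $G$-conjugate into $P$. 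Combined with your first half, and the fact that a closed subgroup cannot be conjugate to a proper subgroup of itself, $Q$ is conjugate to $P$.
\end{itemize}

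The paper instead follows the alternative you mention at the end. It builds compatible systems $(b_K),(c_K),(d_K)$, checking that $(d_K)$ is compatible and belongs to $d$. It uses \cite[Corollary~5.10]{JohnRicardoBrauer1} so that $DK/K$ and $QK/K$ are defect groups at every small level, and applies the finite theorem there. Finally it intersects the nested non-empty closed sets $X_K=\{g\in G : {}^gDK=QK\}$.
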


\begin{proof}
    Given idempotents $x,y$ we use the notation $xy$ or $x\cdot y$ inconsistently, in order to make $x\cdot{^g}y$ unambiguous. Recall first that $H$ is open in $G$ by Corollary \ref{corol:blockidempotents}.  The auxiliary \cite[Lemma 6.8.4]{LinckelmannBookVol2}, whose proof goes through verbatim, says that the maps $\tn{Tr}_H^G$ and ``multiplication by $c$'' yield mutually inverse algebra isomorphisms 
    $$Z(\bigO\db{H}c)\iso Z(\bigO\db{G}\tn{Tr}_H^G(c)).$$  
    The element $b\tn{Tr}_H^G(c) = \tn{Tr}_H^G(bc)$ is a non-zero central idempotent of $\bigO\db{G}$ because $bc\neq 0$, so $b$ being central primitive implies that  $b = b\tn{Tr}_H^G(c)$ -- that is, that $b$ is an element of the right hand side of the above isomorphism, and hence there is a unique $d\in Z(\bigO\db{H}c)$ with $\tn{Tr}_H^G(d) = b$, and $cd=dc=d$ because $d\in Z(\bigO\db{H}c)$.  Finally, since $c\cdot {^g}c = 0$ when $g\notin H$, we have
    $$bd = \Tr_H^G(d)d = \sum_{gH\in G/H} {^g}d\cdot{^g}ccd = cd = d.$$
    For the claim about Morita equivalence, note that $bd=d$, and that $d$ lies in the centre of $\bigO\db{H}$. We therefore have the following equality:
     $$
        d\bigO\db{G}bd=d\bigO\db{G}d=\bigoplus_{gH\in G/H} dg \bigO\db{H}d = \bigoplus_{gH\in G/H} d\cdot{^gd}g \bigO\db{H}= \bigO\db{H}d.
     $$
     To show fullness of $d$, note that ${^gd}\in \bigO \db{G} d \bigO \db{G}$ for all $g\in G$, and therefore $b=\Tr_H^G(d)\in \bigO \db{G} d \bigO \db{G}$. {On the other hand, $d\in d\bigO\db{G} = db\bigO\db{G}\subseteq b\bigO\db{G}$.} Hence,
     $\bigO \db{G} d \bigO \db{G} = \bigO \db{G} b$. It follows that $\bigO \db{G}b$ and $\bigO \db{H} d$ are Morita equivalent.
     
     To prove part (3), we use limits.  Choose an open normal subgroup $M\subseteq H$ of $G$ such that $0\neq \varphi_M(bc) = \varphi_M(b)\varphi_M(c)$, so there are block idempotents $b_M\in Z(\bigO [G/M]), c_M\in Z(\bigO[NM/M])$ such that $\varphi_M(b)b_M =b_M,\varphi_{M}(c)c_M =c_M$ and $b_Mc_M\neq 0$. By Corollary~\ref{corol:blockidempotents}, we can choose $M$ such that $H/M$ is the stabiliser of $c_M$ in $G/M$.  Consider the compatible systems of idempotents $(b_K)_K, (c_K)_K$ for $b,c$ determined by $b_M, c_M$, as $K$ runs through the open normal subgroups of $G$ contained in $M$ (see Corollary~\ref{corol:blockidempotents}).  For each $K$, $b_Kc_K\neq 0$ because $\varphi_{KM}(b_Kc_K)\neq 0$, so the conditions of the {finite version of this proposition (\cite[Theorem~6.8.3]{LinckelmannBookVol2})} apply: there is a unique block idempotent $d_K\in Z(\bigO[H/K]c_K)$ such that $\tn{Tr}_H^G(d_K) = b_K$.  The $d_K$ all have the same $G$-stabiliser $H$, so
     $$b_M\tn{Tr}_H^G(\varphi_{MK}(d_K)) = b_M\varphi_{MK}\tn{Tr}_H^G(d_K) = b_M\varphi_{KM}(b_K) = b_M.$$
     As the unique block idempotent $e$ of $\bigO[H/M]c_M$ such that $b_M\tn{Tr}_H^G(e) = b_M$ is $d_M$, it follows that $d_M\varphi_{MK}(d_K) = d_M$, and hence the system $(d_K)_K$ is compatible.  The same argument, applied to $d$ instead of $d_K$, shows that $(d_K)_K$ is a compatible system for $d$.

    Let $D$ be a defect group of $\bigO\db{G}b$ and $Q$ be a defect group of $\bigO\db{H}d$.
    By \cite[Corollary 5.10]{JohnRicardoBrauer1} there is an open normal subgroup $K_0\subseteq M$ of $G$ such that, for any normal subgroup $K\subseteq K_0$ of $G$, $DK/K$ is a defect group of $\bigO[G/K] b_K$ and $QK/K$ is a defect group of $\bigO[H/K] d_K$. By the finite version of the result \cite[Theorem~6.8.3]{LinckelmannBookVol2}, $DK/K$ and $QK/K$ are conjugate within $G$. So we can define $\emptyset \neq X_K=\{ g\in G \ | \ {^gDK}=QK \}$. The $X_K$ are non-empty clopen subsets of $G$ such that if $L\subseteq K$ then $X_L\subseteq X_K$, which by the standard compactness argument implies that their intersection is non-empty. An element $g$ in the intersection satisfies ${^gD}=Q$. \qedhere
\end{proof}

\begin{prop}\label{prop:defectgroupintersection}
    Let $G$ be a profinite group and $N$ a closed normal subgroup of $G$.  If $b$ is a block idempotent of $G$ and $c$ is a block idempotent of $N$ with $bc\neq 0$, then for any defect group $Q$ of $\bigO\db{N}c$, there is a defect group $D$ of $\bigO\db{G}b$ with $D\cap N = Q$.
\end{prop}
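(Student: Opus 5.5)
We reduce to the finite statement (cf.\ \cite[Theorem~6.8.9]{LinckelmannBookVol2} or the classical result that a defect group of a block of $G$ covering a block $c$ of $N\unlhd G$ intersects $N$ in a defect group of $c$) by passing to finite quotients, and then assemble the result with a compactness argument, exactly as in the proof of part (3) of Proposition~\ref{prop:fong1}.

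\textbf{Setting up compatible systems.} Choose $M\unlhd_O G$ with $\varphi_M(bc)\neq 0$. As in the proof of Proposition~\ref{prop:fong1}, this gives block idempotents $b_M\in Z(\bigO[G/M])$ and $c_M\in Z(\bigO[NM/M])$ with $b_Mc_M\neq 0$. Extend them to compatible systems $(b_K)_K$ and $(c_K)_K$ for $b$ and $c$, where $K$ runs over the open normal subgroups of $G$ contained in $M$, using Corollary~\ref{corol:blockidempotents}. Here $c$ is a block of $N$, and the open normal subgroups $K\cap N$ with $K\unlhd_O G$ are cofinal in $N$. Hence $(c_K)$ is a compatible system for $c$ in $\bigO\db{N}$, viewing $\bigO[NK/K]\cong\bigO[N/N\cap K]$. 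As before, $b_Kc_K\neq 0$ for all such $K$.

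Next, by \cite[Corollary 5.10]{JohnRicardoBrauer1}, applied to both $G$ and $N$, there is $K_0\subseteq M$ such that for all $K\subseteq K_0$ open normal in $G$, the image $QK/K\cong Q(N\cap K)/(N\cap K)$ is a defect group of $\bigO[NK/K]c_K$. We apply this to $N$ with the cofinal system $\{N\cap K\}$.

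\textbf{Applying the finite result.} For each such $K$, the finite result gives a defect group $P_K$ of $\bigO[G/K]b_K$ with $P_K\cap NK/K=QK/K$. Define
$$X_K=\{D\ :\ D \text{ a defect group of } \bigO\db{G}b,\ DK/K\cap NK/K = QK/K\}.$$
By \cite[Corollary 5.10]{JohnRicardoBrauer1} (shrinking $K_0$), every defect group of $\bigO[G/K]b_K$ has the form $DK/K$ with $D$ a defect group of $b$. Hence $X_K\neq\emptyset$. Fix one defect group $D_0$ and write $X_K=\{{}^gD_0 : g\in Y_K\}$. Then $Y_K\subseteq G$ is a nonempty closed (indeed clopen) set, since the condition depends only on $gK$. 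For $L\subseteq K$, the image of the condition modulo $L$ implies the condition modulo $K$, so $Y_L\subseteq Y_K$. By compactness, some $g$ lies in $\bigcap_K Y_K$. Put $D={}^gD_0$.

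\textbf{Main obstacle: identifying $D\cap N$.} We have $(DK\cap NK)/K=QK/K$ for all small $K$. We must conclude $D\cap N=Q$, which means checking that intersection commutes with the limit. The inclusion $D\cap N\subseteq\bigcap_K (DK\cap NK)=\bigcap_K QK=Q$ is clear, using that $Q$ is closed. Conversely, $Q\subseteq DK$ for all $K$ gives $Q\subseteq\bigcap_K DK=D$, and $Q\subseteq N$. So $Q\subseteq D\cap N$, and equality holds.

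The step I expect to need the most care is the bookkeeping that the images of the profinite defect groups really are the defect groups of the finite blocks simultaneously for $G$ and for $N$, with respect to compatible cofinal systems. This is supplied by \cite[Corollary 5.10]{JohnRicardoBrauer1}, together with Proposition~\ref{prop:defectgroupsoveroarethesame} to pass between $\bigO$ and $k$.
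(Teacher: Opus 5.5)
Your overall strategy matches the paper's: compatible systems of block idempotents, Corollary~5.10 of Franquiz Flores--MacQuarrie to identify defect groups of the finite quotients, the finite theorem at each level, compactness over conjugating elements, and finally $\bigcap_K({}^gDK\cap NK)={}^gD\cap N$. There is, however, a gap at the step $Y_L\subseteq Y_K$ for $L\subseteq K$, and the compactness argument needs that step.

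\textbf{Why your justification fails.} You justify the inclusion by saying that ``the image of the condition modulo $L$ implies the condition modulo $K$''. Reducing ${}^gD_0L\cap NL=QL$ modulo $K$ only gives the inclusion $QK=({}^gD_0L\cap NL)K\subseteq {}^gD_0K\cap NK$. The image of an intersection can be strictly smaller than the intersection of the images. For example, take $G=\langle x\rangle\times\langle y\rangle\cong C_p\times C_p$, $D=\langle x\rangle$, $N=\langle xy\rangle$ and $K=\langle y\rangle$. Then $D\cap N=1$ but $DK\cap NK=G$. Without nestedness you have no finite intersection property for the $Y_K$, so the compactness step does not go through as written.

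\textbf{What is missing.} The missing input is block-theoretic, not group-theoretic: you need that \emph{every} defect group of $b_K$ meets $NK/K$ in a subgroup of order $|QK/K|$.
\begin{itemize}
\item This holds because, for any defect group $P$ of $b_K$, $P\cap NK/K$ is a defect group of some block of $NK/K$ covered by $b_K$.
\item Those blocks form a single $G/K$-orbit, the orbit of $c_K$, so all their defect groups have order $|QK/K|$.
\item Your parenthetical ``classical result'' is only correct in this form. For non-stable $c$, a defect group of $b$ meets $N$ in a defect group of some $G$-conjugate of $c$, not necessarily of $c$ itself.
\item With this fact, ${}^gD_0K\cap NK=QK$ is equivalent to $Q\subseteq {}^gD_0K$, and that condition is visibly decreasing in $K$.
\end{itemize}

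\textbf{How the paper avoids this.} The paper first reduces, via Proposition~\ref{prop:fong1}, to the case where $c$ is $G$-stable. It then chooses the $c_K$ to be $G$-stable as well, using Corollary~\ref{corol:blockidempotents}(2). In that setting the finite theorem says $P\cap NK/K$ is a defect group of $c_K$ for every defect group $P$ of $b_K$, so nestedness is immediate.

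Either repair works. The rest of your argument is correct: the compatible-system bookkeeping, the clopenness of $Y_K$, and the final identification $D\cap N=Q$.
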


\begin{proof}
    By Proposition~\ref{prop:fong1} it is enough to prove the result supposing that $c$ is $G$-stable, and hence that $bc = b$.  We may work in a cofinal system of open normal subgroups $M$, together with compatible block idempotents (in the sense of Corollary~\ref{corol:blockidempotents})  $b_M\in Z(\bigO[G/M])$ and $c_M \in Z(\bigO[NM/M])$, such that $b_Mc_M\neq 0$, and if $\bigO\db{G}b$ has defect group $D$ and $\bigO\db{N}c$ has defect group $Q$, then $\bigO[G/M]b_M$ has defect group $DM/M$ and $\bigO[NM/M]c_M$ has defect group $QM/M$.

    Then \cite[Theorem~6.8.9~(i)]{LinckelmannBookVol2} (the finite version of this result) implies that for any $M$ in our cofinal system the set
    $$ X_M= \{g\in G \ | \ {^gDM}\cap NM = QM \} $$
    is non-empty. By the usual compactness argument, the intersection of the $X_M$ is non-empty, and any element $g$ in this intersection satisfies ${^gD}\cap N = Q$ (this uses $ \bigcap_M{^gDM}\cap NM={^gD}\cap N$).
\end{proof}

For the definitions and basic properties of 2-cocycles and central extensions for finite groups, see for instance \cite[Section 1.2 of Chapter 1]{LinckelmannBookVol1}.

\begin{defn}
    Let $G$ be a profinite group and let $\alpha\in Z^2(G/N, \bigO^\times)$ for some $N\unlhd_O G$. Given an open normal subgroup $M\subseteq N$ of $G$, the map $G/M\times G/M\to \bigO^{\times}$ given by $(gM,hM)\mapsto \alpha(gN,hN)$ is a 2-cocycle, {by slight abuse of notation also denoted $\alpha$}, and the natural projection $G/M\to G/N$ defines a homomorphism of twisted group algebras $\bigO_{\alpha}[G/M]\to \bigO_{\alpha}[G/N]$.  We define the \emph{{completed} twisted group algebra}
    $$
        \bigO_\alpha\db{G} = \varprojlim_{M} \bigO_\alpha[G/M],
    $$ 
    where $M$ ranges over the cofinal system formed by the normal subgroups of $G$ contained in $N$.
\end{defn}

\begin{prop}\label{prop:centralextension}
     Let $G$ be a profinite group and let $\alpha\in Z^2(G/N, \bigO^\times)$ for some $N\unlhd_O G$. Assume that the image of $\alpha$ consists of $p'$-roots of unity. Then there is a central extension of profinite groups
     $$
        1 \longrightarrow Z \longrightarrow L \longrightarrow G \longrightarrow 1
     $$
     for a finite abelian $p'$-group $Z$ such that $ \bigO\db{L}e\cong \bigO_\alpha\db{G}$ for an idempotent $e\in Z(\bigO\db{L})$. This is an isomorphism of $G$-algebras.
\end{prop}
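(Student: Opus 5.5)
The plan is to reduce to the classical finite construction, done once at the level of the finite quotient $\bar G=G/N$, and then pull back along $G\to \bar G$. Since $G/N$ is finite and the values of $\alpha$ lie in the finite group $\mu$ of $p'$-roots of unity they generate, we form the finite central extension
$$1\longrightarrow Z\longrightarrow \hat G\longrightarrow G/N\longrightarrow 1,\qquad Z=\mu,$$
with $\hat G=\{(z,\bar g)\}$ and multiplication $(z,\bar g)(w,\bar h)=(zw\alpha(\bar g,\bar h),\bar g\bar h)$. This is the construction used in \cite[Section 1.2 of Chapter 1]{LinckelmannBookVol1}. We then set $L=G\times_{G/N}\hat G$, the fibre product. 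This is a closed subgroup of the profinite group $G\times\hat G$, hence profinite. The projection $L\to G$ is surjective with kernel $\{1\}\times Z$, which is central and a finite abelian $p'$-group.

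Next we define the idempotent
$$e=\frac1{|Z|}\sum_{z\in Z}\zeta(z)^{-1}z\in\bigO[Z]\subseteq Z(\bigO\db{L}),$$
where $\zeta:Z\hookrightarrow\bigO^\times$ is the tautological inclusion. This is legitimate because $|Z|\in\bigO^\times$ and the $p'$-roots of unity lie in $\bigO$, as $k$ is algebraically closed and Hensel's lemma applies. We then identify $\bigO\db{L}e$ with $\bigO_\alpha\db{G}$ one finite level at a time. For $M\unlhd_O G$ with $M\subseteq N$, the subgroup $M\times 1$ is open and normal in $L$, it meets $Z$ trivially, and these subgroups are cofinal among the open normal subgroups of $L$ (cofinality uses that $Z$ is finite). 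Moreover $L/(M\times1)\cong G/M\times_{G/N}\hat G$ is exactly the central extension of $G/M$ attached to the inflated cocycle $(gM,hM)\mapsto\alpha(gN,hN)$. The finite-group statement then gives an algebra isomorphism
$$\bigO[L/(M\times 1)]e\cong\bigO_\alpha[G/M],\qquad (g,(z,gN))e\mapsto \zeta(z)\,\hat g,$$
where $\hat g$ is the canonical basis element of the twisted group algebra. This map is compatible with the transition maps for $M'\subseteq M$. Finally, multiplication by $e$ commutes with inverse limits because $e$ comes from the finite group $Z$ that injects at every level. Passing to $\varprojlim_M$ therefore yields $\bigO\db{L}e\cong\bigO_\alpha\db{G}$ as pseudocompact algebras.

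For the $G$-algebra statement, we let $G$ act on $\bigO\db{L}$ by conjugation through any lift to $L$. This is well defined because $Z$ is central, and it fixes $e$. On $\bigO_\alpha\db{G}$, $g$ acts by conjugation by $\hat g$, which is well defined up to the scalar units in $\mu$ that act trivially. Since the levelwise isomorphisms send a lift of $g$ to a scalar multiple of $\hat g$, they intertwine these two actions.

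I expect the main obstacle to be bookkeeping rather than any conceptual step. The points to check are that the levelwise isomorphisms are compatible with the transition maps, that the cofinal system $\{M\times 1\}$ really computes $\bigO\db{L}$, and that the $G$-actions are well defined and continuous. Each of these reduces to the finite statement together with the fact that $\alpha$ is inflated from the fixed finite quotient $G/N$.
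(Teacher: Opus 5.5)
Your proposal is correct and is essentially the paper's proof. Your fibre product $G\times_{G/N}\hat G$ is canonically the paper's group $L=Z\times G$ with multiplication $(z,g)(z',g')=(zz'\alpha(g,g'),gg')$, your $e$ is the paper's $e_\chi$, and your levelwise identification over the cofinal system $\{M\times 1\}$, compatible with the transition maps, spells out what the paper compresses into ``one checks just as in the finite case''.
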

\begin{proof}
    The proof is just as in the finite case.
    Via the isomorphism {$\bigO^{\times}\iso k^{\times}\times (1+J(\bigO))$}, the image of $\alpha$ is a finite subset of $k^{\times}$, so generates a finite $p'$-subgroup of $\bigO^\times$. Let $Z$ be this subgroup, and define $L=Z\times G$ with multiplication $(z,g)(z',g') = (zz'\alpha(g,g'),gg')$. The topology on $L$ is clearly profinite, and the multiplication is clearly continuous. So $L$ is a profinite group, and projection onto $G$ is a continuous group homomorphism.
    The inclusion of $Z$ in $\bigO^\times$ defines a character $\chi$ of $Z$.
    We set $e=e_{\chi}$, the central primitive idempotent of $\bigO\db{Z}$ associated to $\chi$. Since $Z$ is central in $L$, we have $e\in Z(\bigO{\db L})$, and one checks just as in the finite case that the map that sends $(z,g)e_\chi=(1,g)\chi(z)e_{\chi}$ (for $(z,g)\in L$) to $g\in \bigO_{\alpha}\db{G}$ is an algebra isomorphism. 
    Moreover, this isomorphism is clearly $L$-equivariant, that is, it is an isomorphism of $L$-algebras. Since the conjugation action of $Z$ is trivial on both domain and range, this isomorphism is an isomorphism of $G$-algebras ($G$ being $L/Z$).   
\end{proof}

\begin{remark}\label{remark:twistedgroupalgebra}
    In the situation of the foregoing proposition, let $\pi:\ L\twoheadrightarrow G$ denote the natural epimorphism. By Proposition~\ref{prop:defectgrpmodulopprime}, a closed pro-$p$ subgroup $D\leq L$ is a defect group for a block $\bigO\db{L}f$ of $\bigO\db{L}e$ as an $L$-algebra if and only if $\pi(D)$ is a defect group for $\bigO\db{L}f$ as a $G$-algebra. This explains why we are allowed to treat blocks of $\bigO\db{L}e$ as $G$-algebras below. 
\end{remark}

\begin{theorem}[{Profinite Fong-Reynolds reduction, cf. \cite[Theorem~6.8.13]{LinckelmannBookVol2}}]\label{thm:fong-reynolds}
    Let $G$ be a profinite group and $N$ a closed normal subgroup of $G$.  Let $S=\bigO\db{N}c$ be a $G$-stable block with trivial defect group.
    Then there is an open normal subgroup $L$ of $G$ and a map $s : G/L\to S^{\times}$ such that ${}^g(-) = {}^{s(g)}(-)$ as automorphisms of $S$ for every $g\in G$, and such that $s(gL)s(hL) = s(ghL)$ when at least one of $g,h\in N$.
    The map $\alpha : G/NL\times G/NL\to \bigO^\times$ given by $\alpha(xNL,yNL) = s(xyL)(s(xL)s(yL))^{-1}$ is well-defined  and a $2$-cocycle with image contained in the $p'$-roots of unity of $\bigO$.  We obtain an isomorphism of $\bigO$-algebras
    $$\bigO\db{G}c\iso S\otimes_{\bigO} \bigO_\alpha\db{G/N}$$
    sending $xc$ to $s(xL)\otimes xN$ for $x\in G$.

    If $N$ is finite, then $L$ can be chosen to be any open normal subgroup of $G$ intersecting $N$ trivially.
\end{theorem}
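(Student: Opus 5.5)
The plan is to follow the finite-group proof of Fong–Reynolds. The new ingredient is a compactness/continuity argument that makes the map $s$ factor through a finite quotient $G/L$.

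\emph{Step 1: the algebra $S$.} Since $\bigO\db{N}c$ has trivial defect group, the simple modules of $S$ are discrete. By Corollary~\ref{corol:blockidempotents} we may write $c$ through a compatible system of idempotents, and Proposition~\ref{prop:defectgroupmaps} gives trivial defect for all sufficiently small finite quotients. Hence there is an open normal subgroup $N_1$ of $N$ such that $\varphi_{N_1}$ maps $S$ isomorphically onto a block of $\bigO[N/N_1]$ of defect zero. In particular $S\cong M_n(\bigO)$ is a finite free $\bigO$-module, and $N_1$ acts trivially on $S$.

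\emph{Step 2: construction of $s$.} Conjugation gives a continuous homomorphism $G\to\Aut(S)$. Its kernel is open, because $S$ is finitely generated as an $\bigO$-module and the action is continuous. By Skolem–Noether over $\bigO$ (every automorphism of $M_n(\bigO)$ is inner), each ${}^g(-)$ equals ${}^{u}(-)$ for some $u\in S^\times$, unique up to $\bigO^\times$.
\begin{itemize}
\item Choose an open normal subgroup $L_0$ of $G$ acting trivially on $S$, and set $L=L_0\cap C_G(\ldots)$ small enough that $L\cap N\subseteq N_1$. When $N$ is finite, take any $L$ with $L\cap N=1$.
\item On $NL/L\cong N/(N\cap L)$, define $s(nL)=nc$. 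This is well defined because $N\cap L$ acts trivially on $S$ and lies in the kernel of $n\mapsto nc$; this needs $L$ small, and I would use $\varphi_{N_1}$ to arrange it.
\item On the other cosets of $NL$ in $G/L$, pick $u_x$ for a transversal $x$ of $NL$ in $G$. Normalise $u_x$ so that $\det(u_x)$ is a $p'$-root of unity, hence of order dividing a fixed number. This is possible since $\bigO$ is henselian and $k$ is algebraically closed, so $n$-th roots of units exist up to $p$-parts; I would instead normalise the $p$-power part as in the finite proof, \cite[Theorem~6.8.13]{LinckelmannBookVol2}.
\item Set $s(xnL)=u_x\, nc$.
\end{itemize}

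\emph{Step 3: the cocycle.} The multiplicativity $s(gL)s(hL)=s(ghL)$ when $g$ or $h$ lies in $N$ follows from the construction and from ${}^{u_x}(nc)=({}^xn)c$. Comparing conjugation actions shows $s(xyL)(s(xL)s(yL))^{-1}$ is central in $S$, hence lies in $\bigO^\times$. Its determinant is a root of unity, which forces it to be a $p'$-root of unity after the normalisation. It is $NL$-bi-invariant by the $N$-multiplicativity and the triviality of $L$, so $\alpha$ is well defined on $G/NL$, and the cocycle identity follows from associativity.

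\emph{Step 4: the isomorphism.} On each finite level $G/M$, with $M\subseteq L$ open normal, the map $xc\mapsto s(xL)\otimes xN$ is the finite Fong–Reynolds isomorphism. Passing to the inverse limit gives $\bigO\db{G}c\cong S\otimes_\bigO\bigO_\alpha\db{G/N}$. The tensor product is automatically complete because $S$ is finite free.

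I expect the main obstacle to be the normalisation making $\alpha$ take values in $p'$-roots of unity, together with checking that $s$ is compatible with the inverse system.
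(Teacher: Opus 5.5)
Your overall architecture matches the paper's: realise $S$ as a defect-zero block of a finite quotient, define $s$ on $G/L$ with $s(nL)=nc$, check the cocycle, and pass to the inverse limit. The gap is in the one step with real content, namely that $\alpha$ can be chosen with values in the $p'$-roots of unity. You do not prove it, and the normalisation you sketch cannot deliver it.

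Since $S$ has defect zero, the size $n$ of $S\cong M_n(\bigO)$ has $p$-part $|N/(N\cap L)|_p$, so in general $p\mid n$. Rescaling $u_x$ by $\lambda\in\bigO^\times$ multiplies $\det(u_x)$ by $\lambda^n$. When $p\mid n$, the map $\lambda\mapsto\lambda^n$ is not surjective on the pro-$p$ group $1+J(\bigO)$, so rescaling cannot force $\det(u_x)$ to be a root of unity. That $\det(u_x)$ nevertheless lies in $(\bigO^\times)^n$ up to a root of unity is a genuine fact about the extension, essentially equivalent to what you are proving. Your remark that $n$-th roots exist ``up to $p$-parts'' names the obstruction rather than removing it. Step~3 therefore rests on an unproved normalisation: the determinant in $S$ only sees $\alpha^n$. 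Writing ``as in the finite proof'' defers exactly the point you flag as the main obstacle.

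The missing idea is to pass, via the Frattini argument, to a corner of $\hat S=K\otimes_\bigO S$ whose size is prime to $p$. This is what the paper does.
Take $Q=P\cap N$ for a Sylow $p$-subgroup $P$ of $G$. Then $G=N_G(Q)N$, so the transversal of $NL$ in $G$ can be chosen inside $N_G(Q)$.
Put $f_Q=[Q:L\cap Q]^{-1}\sum_{y(L\cap Q)\in Q/(L\cap Q)}yc\in\hat S$. An irreducible $S$-lattice is free over $\bigO[Q/(L\cap Q)]$, so $f_Q\hat Sf_Q\cong M_{n'}(K)$ with $n'=n/[Q:L\cap Q]$ prime to $p$.
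For $x\in N_G(Q)$, the element $t(xL)$ implementing ${}^x(-)$ commutes with $f_Q$. Because $p\nmid n'$, it can be rescaled so that $f_Qt(xL)$ has determinant $1$ in $M_{n'}(K)$.
Then $f_Q\alpha(xNL,yNL)$ is a scalar matrix of determinant $1$, so $\alpha$ takes values in the $n'$-th roots of unity.

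There are also three smaller points.
\begin{enumerate}
\item Your reason for the kernel of $G\to\Aut(S)$ being open is false in general. Conjugation by $\mathrm{diag}((1+p)^a,1)$, $a\in\Z_p$, is a faithful continuous action of $\Z_p$ on $M_2(\bigO)$. The correct reason is that $S\cong\bigO[N/(N\cap L)]\varphi_{N\cap L}(c)$ and $[L,N]\subseteq L\cap N$. So $L$ centralises $N/(N\cap L)$ and acts trivially on $S$.
\item The same observation justifies the final clause: if $L\cap N=1$ then $L$ centralises $N$. You assert that clause without checking that such an $L$ acts trivially on $S$.
\item Proposition~\ref{prop:defectgroupmaps} does not yield Step~1, since $\varphi_K$ is not an isomorphism on centres. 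The paper gets the finite defect-zero model from Theorem~\ref{theorem:reduction_to_virtually_pro_p}.
\end{enumerate}
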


\begin{proof}
    First observe that $N$ must have finite $p$-Sylow subgroup because the (trivial) defect group of $S$ is open in a $p$-Sylow subgroup that contains it.  By Theorem~\ref{theorem:reduction_to_virtually_pro_p}, there is a closed normal pro-$p'$ subgroup $M$ of $N$ such that $N/M$ is virtually pro-$p$, hence finite, with $\bigO\db{N}c \cong \bigO[N/M]\varphi_M(c)$ via the canonical map $\varphi_M$, and such that $\bigO[N/M]\varphi_M(c)$ has trivial defect group.
    
    Hence, there exists an open normal subgroup $L$ of $G$ such that $L\cap N \subseteq M$. Note that in the special case where $N$ is finite, we can choose $M=1$ and the condition on $L$ becomes $L\cap N = 1$, which is the final claim of the theorem.
    Using Theorem~\ref{theorem:reduction_to_virtually_pro_p} again we see that $\bigO\db{N}c \cong \bigO[N/L\cap N]\varphi_{L\cap N}(c)$ via the canonical map.  In particular, $L$ acts trivially on $S = \bigO\db{N}c$ by conjugation and $xc=c$ for all $x\in L\cap N$.  

    Fix a Sylow $p$-subgroup $P$ of $G$ and set $Q=P\cap N$. Then $Q$ is a Sylow $p$-subgroup of $N$ and $G=N_G(Q)N$ by the Frattini argument. Let $K$ denote the fraction field of $\bigO$, set $\hat S=K\otimes_\bigO S$ and
    $$f_Q= \frac{1}{[Q:L\cap Q]}\sum_{y(L\cap Q)\in Q/(L\cap Q)} yc\in \hat S.$$ 
    If $V$ denotes an irreducible $S$-lattice, then $V$ is projective and the $p$-part of $\rank_{\bigO}(V)$ is equal to $[Q:L\cap Q]$ (this is all because $S$ is a block of trivial defect of $\bigO[N/L\cap N]$ { -- see for instance \cite[Theorem~6.6.2]{LinckelmannBookVol2}}). Then $V$ is free as an {$\bigO[Q/(Q\cap L)]$}-module so the rank of $f_QV = V^{Q/(L\cap Q)}$ is $\rank_{\bigO}(V)/[Q:L\cap Q]$, which is coprime to $p$. It follows that $f_Q\hat Sf_Q\cong M_n(K)$ for some $n$ coprime to $p$. In particular, since $\bigO^\times$ is closed under taking $n$-th roots, any element of $f_Q\hat Sf_Q$ with determinant in $\bigO^\times$ can be multiplied by an element of $\bigO^\times$ to obtain an element of determinant~$1$.

    For $x\in N_G(Q)$, consider $xL\in G/L$, and fix an element $t(xL)\in S^{\times}$ such that ${}^x(-) = {}^{t(xL)}(-)$.  Since $xL$ commutes with $f_Q$, it follows that $t(xL)$ is an element of $f_Q\hat Sf_Q\times (1-f_Q)\hat S(1-f_Q)$. By the last sentence of the preceding paragraph, we can assume that $f_Q t(xL)$ has determinant $1$ in $f_Q\hat Sf_Q\cong M_n(K)$. 
    
    Now $NL/L\unlhd G/L$ and we can argue as in the finite result \cite[Theorem~6.8.13]{LinckelmannBookVol2}: we choose a set $R$ of left coset representatives of $NL/L$ in $G/L$, and assume that $R \subset N_G(Q)L/L$ (where $Q$ is as above and we again rely on $G=N_G(Q)N$). For $xL\in R$ define $s(xL)=t(xL)$. We then extend to a map $s : G/L\to S^{\times}$ via $s(xnL) = s(xL)n\cdot c$ whenever $xL\in R$ and $n\in N$.  
    By the finite result the map $\alpha_L : G/L\times G/L\to \bigO^{\times}$ given by 
    \begin{equation}
        \alpha_L(xL,yL) = s(xyL)(s(xL)s(yL))^{-1}\label{eqn:cocyle}
    \end{equation} 
    is well defined and constant of cosets of $N$, that is, it induces a well-defined $2$-cocycle $\alpha : G/LN\times G/LN\to \bigO^{\times}$. We can choose coset representatives for $G/LN$ contained in $N_G(Q)$, which means that the image of $\alpha$ is contained in elements of the form $t(xyL)(t(xL)t(yL))^{-1}$. By our choice of the $t(xL)$, it follows that $f_Q\alpha(xLN, yLN)$ has determinant $1$ in $f_Q\hat Sf_Q\cong M_n(K)$. Given that it is also a scalar matrix, it follows that the image of $\alpha$ is contained in the $n$-th roots of unity in $\bigO^\times$.
    
    By the finite result, the map $gL\varphi_L(c)\mapsto s(gL)\otimes gLN$ induces an isomorphism of algebras $$\bigO[G/L]\varphi_L(c)\iso S\otimes_{\bigO} \bigO_\alpha[G/LN].$$
    To be precise, multiplicativity follows from equation~\eqref{eqn:cocyle} and surjectivity follows from $s(nL)=nc$.
    Consider now the cofinal system of open normal subgroups $M$ of $G$ contained in $L$.  For each such $M$ we can construct an isomorphism $\bigO[G/M]\varphi_M(c)\iso S\otimes_{\bigO} \bigO_\alpha[G/MN]$ by $gM\varphi_M(c)\mapsto s(gL)\otimes gMN$. Again, multiplicativity follows from equation~\eqref{eqn:cocyle} and surjectivity follows from $s(nL)=nc$.  
    This family of isomorphisms induces an isomorphism between inverse systems, and therefore between $\varprojlim_M\bigO[G/M]\varphi_M(c)=\bigO\db{G}c$ and $\varprojlim_M S\otimes_{\bigO} \bigO_\alpha[G/MN] = S\otimes_{\bigO} \bigO_\alpha\db{G/N}$ given by $gc\mapsto s(gL)\otimes gN$.
\end{proof}

\begin{corol}\label{corol:fong-reynolds}
    Let $G$ be a profinite group and let $\bigO \db{G}b$ be a block with defect group $D$. Let $N$ be a closed normal subgroup and let $S = \bigO \db{N}c$ be a $G$-stable block with trivial defect group such that $bc\neq 0$. Then there exists a central extension of profinite groups
    $$
        1 \longrightarrow Z \longrightarrow L \longrightarrow G/N \longrightarrow 1
    $$
    where $Z$ is a finite abelian $p'$-group, and a block $\bigO \db{L} e$ which is Morita equivalent to $\bigO \db{G}b$. The defect group of $\bigO \db{L} e$ is isomorphic to $D$.
\end{corol}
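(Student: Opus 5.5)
Since $c$ is $G$-stable and $bc\neq 0$, we have $c\in Z(\bigO\db{G})$ and $bc=b$, so $\bigO\db{G}b$ is a block of $\bigO\db{G}c$. The plan is to pass through the isomorphism of Theorem~\ref{thm:fong-reynolds},
$$\bigO\db{G}c\iso S\otimes_\bigO \bigO_\alpha\db{G/N},$$
where $\alpha\in Z^2(G/NL,\bigO^\times)$ takes values in the $p'$-roots of unity. Proposition~\ref{prop:centralextension}, applied to the profinite group $G/N$ and its open normal subgroup $NL/N$, then gives a central extension $1\to Z\to L'\to G/N\to 1$ with $Z$ a finite abelian $p'$-group and an idempotent $e_\chi\in Z(\bigO\db{L'})$ with $\bigO\db{L'}e_\chi\cong \bigO_\alpha\db{G/N}$. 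I rename $L'$ as $L$ in the statement.

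Next, $S$ has trivial defect and is a block of a finite quotient of $N$ (by the argument in the proof of Theorem~\ref{thm:fong-reynolds}), so $S\cong M_m(\bigO)$. Tensoring with a matrix algebra is a Morita equivalence of pseudocompact algebras: taking $\epsilon=e_{11}\otimes 1$ gives a full idempotent with $\epsilon(S\otimes A)\epsilon\cong A$. Hence $\bigO\db{G}c$ is Morita equivalent to $\bigO_\alpha\db{G/N}\cong\bigO\db{L}e_\chi$. Since the centres correspond, $Z(M_m(\bigO)\otimes A)=Z(A)$, block idempotents match, and $b$ maps to $1\otimes f'$ for a block idempotent $f'$. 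Under the isomorphism with $\bigO\db{L}e_\chi$, $f'$ becomes a block idempotent $e$ of $\bigO\db{L}$ with $ee_\chi=e$, and $\bigO\db{G}b\sim_{\rm Morita}\bigO\db{L}e$.

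The main obstacle is the defect group claim. I would treat everything as $G$-algebras, with $G$ acting by conjugation on $\bigO\db{G}b$. I would first check that, in Theorem~\ref{thm:fong-reynolds}, conjugation by $g$ on the left corresponds to ${}^{s(g)}(-)\otimes{}^{gN}(-)$ on the right. On $1\otimes \bigO_\alpha\db{G/N}$ this is just the $G/N$-action, and on centres the isomorphism is $G$-equivariant. Let $B=\bigO_\alpha\db{G/N}f'$ and $A=\bigO\db{G}b\cong S\otimes B$. The idea is to compare $\oTr$'s: $b\in\oTr_H^G(A^H)$ if and only if $f'\in\oTr_H^G(B^H)$. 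The map $B\to S\otimes B$, $x\mapsto 1\otimes x$, is $G$-equivariant when $S$ carries the twisted action, which gives one direction. For the other direction, apply the map $\tau\otimes\mathrm{id}$, where $\tau$ is a suitably normalised trace $S\to\bigO$. The normalisation is by $m^{-1}$, and this needs care when $p\mid m$. For that case I would instead use the $f_Q$-trick from the proof of Theorem~\ref{thm:fong-reynolds}, working with a corner of $p'$-rank, or alternatively reduce to finite quotients and cite \cite[Theorem~6.8.13]{LinckelmannBookVol2} together with a compactness argument as in Proposition~\ref{prop:fong1}(3).

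This identifies the defect groups of $A$ with those of $B$ as a $G$-algebra. Since $N$ acts trivially on $B$ and $D\cap N=1$ (the latter by Proposition~\ref{prop:defectgroupintersection}, since $c$ has trivial defect), the defect group of $B$ as a $G/N$-algebra is $DN/N\cong D$. Two things remain to be checked here: that $N$ need not be pro-$p'$, and that $DN/N$ really is a defect group as a $G/N$-algebra; this holds directly from the definition of $\oTr$, since the open subgroups containing $N$ are cofinal modulo $N$. Finally, Remark~\ref{remark:twistedgroupalgebra} and Proposition~\ref{prop:defectgrpmodulopprime} transfer this to a defect group of $\bigO\db{L}e$ as an $L$-algebra: its image in $G/N$ is $DN/N$, and since $Z$ is $p'$ it is isomorphic to $D$.
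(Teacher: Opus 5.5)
\textbf{Gap in the defect group argument.} Your treatment of the Morita equivalence matches the paper: Theorem~\ref{thm:fong-reynolds}, $S\cong M_m(\bigO)$, $b\mapsto 1\otimes f'$, then Proposition~\ref{prop:centralextension}. The problem is the intermediate claim that $b\in\oTr_H^G(A^H)$ if and only if $f'\in\oTr_H^G(B^H)$, i.e.\ that $A$ and $B$ have the same defect groups \emph{as $G$-algebras}. This claim is false as soon as $N$ is not pro-$p'$, which is equivalent to $p\mid m$, the case you flag.

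Since $N$ acts trivially on $B$, we have $\Tr_{HM}^G(B^{HM})=[HMN:HM]\,\Tr_{HMN}^G(B^{HMN})$. So the $G$-algebra defect groups of $B$ pick up the $p$-part of $N$, while $D\cap N=1$. For a concrete counterexample, take $G=N=A_4$, $p=3$, and $b=c$ the block of the degree-$3$ character. Then $D=1$, $S\cong M_3(\bigO)$, and $B=\bigO$ with trivial action. Here $\Tr_H^G(B^H)=[G:H]\bigO$, so $B$ has defect group $C_3$ as a $G$-algebra.

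Consequently, neither the $f_Q$-trick nor a finite-quotient argument can establish the equivalence you are aiming at. Only the direction coming from the equivariant map $x\mapsto 1\otimes x$ is true. Your final paragraph deduces the $G/N$-defect group of $B$ from its $G$-defect group, so it starts from a false premise, even though its conclusion is right.

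\textbf{The missing idea.} You need to compare $A$ as a $G$-algebra directly with $B$ as a $G/N$-algebra, without any projection $S\to\bigO$; this is what the paper does. The group $N$ acts on $S$ by conjugation with the units $s(nL)=nc$, which span $S$, and acts trivially on $B$. Hence
$$(S\otimes_\bigO B)^N=Z(S)\otimes_\bigO B=1\otimes B.$$
So for $H\leq_C G$, $M\unlhd_O G$ and $x\in(S\otimes B)^{HM}$, the partial trace $\Tr_{HM}^{HMN}(x)$ has the form $1\otimes y$ with $y\in B^{HMN}$, and
\[
\Tr_{HM}^G(x)=\Tr_{HMN}^G(1\otimes y)=1\otimes\Tr_{HMN/N}^{G/N}(y).
\]
Therefore $\oTr_H^G((S\otimes B)^H)\subseteq 1\otimes\oTr_{HN/N}^{G/N}(B^{HN/N})$, with no division by $m$ needed.

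Taking $H=D$ gives a defect group $E/N\subseteq DN/N$ of $B$ as a $G/N$-algebra. Your map $x\mapsto 1\otimes x$, applied to $f'\in\oTr_{E/N}^{G/N}(B^{E/N})=\oTr_E^G(B^E)$, shows that $E\supseteq{}^gD$ for some $g$. Hence ${}^g(DN)\subseteq E\subseteq DN$, which forces $E=DN$, since a closed subgroup of a profinite group cannot be conjugate to a proper subgroup of itself. Finally $E/N\cong D$ because $D\cap N=1$. From there, your use of Remark~\ref{remark:twistedgroupalgebra} to pass to $\bigO\db{L}e$ goes through.
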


\begin{proof}
    By Proposition~\ref{prop:defectgroupintersection}, $D\cap N$ is conjugate within $G$ to a defect group of $\bigO \db{N}c$. That is, $D\cap N =1$. 
    By Theorem~\ref{thm:fong-reynolds}, we get an isomorphism of $\bigO$-algebras
    $$\gamma:\ \bigO\db{G}c\iso S\otimes_{\bigO} \bigO_\alpha\db{G/N}:\ xc \mapsto s(xL)\otimes xN$$
    for a suitable $2$-cocycle $\alpha$. In particular, there is a block idempotent $d$ of $\bigO_\alpha\db{G/N}$ such that $\gamma(b)=1_S\otimes d$, and $\bigO\db{G}b$ is Morita equivalent to $\bigO_\alpha\db{G/N}d$ (in fact, it is a matrix algebra over it).     
    Since $\gamma$ is an isomorphism of $G$-algebras (where $G$ acts on $S$ via $s(xL)$), it is clear that $D$ is a defect group of $S\otimes_{\bigO} \bigO_\alpha\db{G/N}d$. 
    
    We will show that $D$ is also a defect group of $\bigO_{\alpha}\db{G/N}d$. {Let $H\leq_C G$ and $M \unlhd_O G$.} Note that $N$ acts trivially on $\bigO_\alpha\db{G/N}$ and therefore $$(S\otimes_\bigO\bigO_\alpha\db{G/N})^N=S^N\otimes_\bigO\bigO_\alpha\db{G/N}=Z(S)\otimes_\bigO \bigO_\alpha\db{G/N}.$$ 
    Of course, $Z(S)=\bigO$, so any element in $Z(S)\otimes_\bigO \bigO_\alpha\db{G/N}$ is of the form $1\otimes y$. It follows that given $x\in (S\otimes_\bigO\bigO_\alpha\db{G/N})^{HM}$, we get $\Tr_{HM}^{HMN}(x)=1\otimes y$ for some $y\in \bigO_\alpha\db{G/N}^{HMN}$ and therefore
    $$
        \Tr_{HM}^G (x) = \Tr_{HMN}^G(1\otimes y) = 1 \otimes \Tr_{HMN/N}^{G/N}(y).
    $$
    Thus $\oTr_H^G((S\otimes \bigO_\alpha\db{G/N})^H)\subseteq Z(S)\otimes \oTr_{HN/N}^{G/N}(\bigO_\alpha\db{G/N}^{HN/N})$. Taking $H = D$, this shows that $DN/N$ contains a defect group of $\bigO_\alpha\db{G/N}d$ as a $G/N$-algebra. Conversely, if $E/N\subseteq DN/N$ is a defect group of $\bigO_\alpha\db{G/N}d$ as a $G/N$-algebra (where $N\leq E \leq G$), then $d\in \oTr_{E/N}^{G/N}(\bigO_\alpha\db{G/N}^{E/N})=\oTr_{E}^{G}(\bigO_\alpha\db{G/N}^{E})$, and therefore $1\otimes d \in \oTr_{E}^{G}((S\otimes_\bigO \bigO_\alpha\db{G/N})^{E})$. So $E$ contains a defect group of $\bigO\db{G}b$ as a $G$-algebra. We conclude that ${}^gD\subseteq E \subseteq DN$ for some $g\in G$.  
    So $E=DN$ because $DN$ cannot be conjugated to a proper subgroup of itself,  and therefore $E/N\cong DN/N\cong D$, where the last isomorphism uses $D\cap N=1$.

   The 2-cocycle $\alpha$ satisfies the hypotheses of Proposition~\ref{prop:centralextension} by Theorem \ref{thm:fong-reynolds}, so by Proposition~\ref{prop:centralextension} there is a central extension of profinite groups $L$ as in the statement and a block $\bigO \db{L}f$ such that $\bigO \db{L}f\cong \bigO_\alpha\db{G/N}d$ as $G/N$-algebras. In particular,  $\bigO \db{L}f$ has defect group isomorphic to $DN/N\cong D$ (see also Remark~\ref{remark:twistedgroupalgebra}) and it is Morita equivalent to $\bigO \db{G}b$. 
\end{proof}

\begin{remark}\label{rem:GbyOpOpprimeDecreases}
    Under the hypotheses of Corollary \ref{corol:fong-reynolds}, denote by $\pi : L \to G/N$ the given homomorphism.  Then 
    $$\pi^{-1}(O_p(G)N/N) \subseteq O_p(L)Z \subseteq O_p(L)O_{p'}(Z(L))$$
    and hence, if $G$ is virtually pro-$p$ (so the numbers are finite), $O_{p'}(Z(G)) \subsetneqq N$ and $N\cap O_p(G) = 1$, we have
    $$|L/O_p(L)O_{p'}(Z(L))| \leqslant |G/O_p(G)N| < |G/O_p(G)O_{p'}(Z(G))|.$$
\end{remark}

The results proved in this section so far hold for arbitrary profinite groups, but 
Theorem~\ref{thm:normal-defect} below will only be for virtually pro-$p$ groups. One of the technical difficulties in the general case, in particular in the proof of Proposition~\ref{prop:fong1}, was that in general a quotient map $\varphi_M:\ \bigO\db{G}\longrightarrow \bigO[G/M]$ (some $M\unlhd_O G$) does not send block idempotents to block idempotents. Proposition~\ref{prop:blockbijection} shows that this difficulty can be avoided when $G$ is virtually pro-$p$ (cf.\ Remark \ref{remark:vpropeasier}).

\begin{prop}\label{prop:blockbijection}
    Let $G$ be a virtually pro-$p$ group. Then there exists an open normal pro-$p$ subgroup $N_0$ of $G$ such that for any $N\unlhd_O G$ with $N\subseteq N_0$ the canonical map
    $$
        \varphi_N:\ Z(\bigO\db{G})  \longrightarrow Z(\bigO[G/N])
    $$
    induces a bijection of block idempotents.
\end{prop}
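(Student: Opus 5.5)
Let $P$ be an open normal pro-$p$ subgroup of $G$. The plan is to show that the number of blocks of $\bigO[G/N]$ is bounded independently of $N$, and that the maps $\psi_{MN}$ of Proposition~\ref{prop:idempotentsdirectlimit} are eventually bijective. Bijectivity of $\varphi_N$ on block idempotents then follows from the direct limit description.

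The first step is to bound the number of blocks. Blocks of $\bigO[G/N]$ correspond to blocks of $k[G/N]$, and every block contains at least one simple module. For $N\subseteq P$, a simple $k[G/N]$-module is annihilated by the normal $p$-subgroup $P/N$. Hence simple modules of $k[G/N]$ are exactly simple modules of $k[G/P]$. Their number is at most the number of simple $k[G/P]$-modules, which is a finite constant $c$. So each $E(Z(\bigO[G/N]))$ has at most $c$ elements for $N\subseteq P$.

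Next, the maps $\psi_{MN}$ are surjective. Indeed, for $N\subseteq M$, $\varphi_{MN}$ sends $1$ to $1$ and each block idempotent of $\bigO[G/N]$ to a sum of block idempotents of $\bigO[G/M]$ (possibly zero). Every block $b_M$ appears in $\varphi_{MN}(b_N)$ for exactly one $b_N$, namely $\psi_{MN}(b_M)$. Surjectivity therefore amounts to $\varphi_{MN}(b_N)\neq 0$ for every $b_N$. This is the main obstacle. Simple modules alone do not suffice, since a simple module of $b_N$ is inflated from $G/P$ but need not lie in a block $b_M$ mapping to $b_N$. I would avoid the issue by reversing the roles. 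Each $\psi_{MN}$ is a map of finite sets of size at most $c$, and the direct limit $E(Z(\bigO\db{G}))$ is the union of the images $\psi_N(E(Z(\bigO[G/N])))$. So the cardinalities of images stabilize, which gives some $N_0$.

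The precise choice I would make is to pick $N_0\subseteq P$ maximizing $|E(Z(\bigO[G/N_0]))|$ among open normal $N_0\subseteq P$; this is possible by the bound $c$. I would then argue directly that $\varphi_{N_0N}(b_{N})\neq0$ for $N\subseteq N_0$. The block $b_N$ has a simple module $V$, inflated from $G/P$, hence also a $k[G/N_0]$-module lying in some block $b_{N_0}$. Then $b_N$ acts as the identity on $V$, while $\varphi_{N_0N}(b_N)$ acts on $V$ as $b_N$ does, since $V$ is inflated. So $\varphi_{N_0N}(b_N)V=V\neq 0$ and the image is nonzero.

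This argument actually works for all $N\subseteq M\subseteq P$, which gives surjectivity of each $\varphi$-induced correspondence. Combined with the fact that $\psi_{MN}$ is a well-defined map (injective in the sense that distinct $b_M$ map to possibly equal $b_N$), surjectivity of $\psi_{MN}$ plus $|E_N|\le c$ lets us choose $N_0$ with $|E_{N_0}|$ maximal. Then $\psi_{N_0N}$ is a surjection between sets where the target is no larger than the source, hence a bijection. In that case $\varphi_{N_0N}(b_N)=b_{N_0}$ is a single block. Passing to the limit via Proposition~\ref{prop:idempotentsdirectlimit}, $\psi_N$ is a bijection, $\varphi_N(b)=b_N$, and so $\varphi_N$ induces the desired bijection.
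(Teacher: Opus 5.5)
Your surjectivity argument is correct, and it actually refutes your own remark that simple modules ``do not suffice''. If $N\subseteq M\subseteq P$ and $V$ is a simple module in the block $b_N$, then $V$ is inflated from $G/M$. So $\varphi_{MN}(b_N)$ acts as the identity on $V$, and $V$ lies in a block $b_M$ with $\psi_{MN}(b_M)=b_N$.

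The genuine gap is the counting step that follows, where the inequality runs the wrong way. Write $E_N=E(Z(\bigO[G/N]))$. Surjectivity of $\psi_{N_0N}:E_{N_0}\to E_N$ already gives $|E_N|\le|E_{N_0}|$. So choosing $N_0$ with $|E_{N_0}|$ maximal adds no information, and a surjection onto a set no larger than its source need not be injective. Since $|E_N|$ is non-increasing as $N$ shrinks inside $P$, the subgroup $P$ itself always maximises $|E_N|$, and this choice can fail.

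For a concrete counterexample, let $p$ be odd, let $G=\Z_p\rtimes C_2$ with $C_2$ acting by inversion, and take $P=\Z_p$. Then $\bigO[G/P]=\bigO C_2$ has two blocks. For $m\geq 1$, the group $G/p^m\Z_p\cong D_{2p^m}$ has a normal Sylow $p$-subgroup containing its own centraliser, hence a single block. So your recipe forces $N_0=P$. But $\varphi_P$ sends the unique block idempotent $1$ of $\bigO\db{G}$ to $1=e_++e_-$, which is not a block idempotent of $\bigO C_2$.

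The repair is to take $N_0\subseteq P$ with $|E_{N_0}|$ \emph{minimal}. This exists because these are positive integers, so your bound $c$ is not needed. For $N\subseteq N_0$, minimality gives $|E_N|\geq|E_{N_0}|$, so the surjection $\psi_{N_0N}$ is a bijection. From $\psi_{N_0N'}=\psi_{NN'}\circ\psi_{N_0N}$, every transition map below $N_0$ is bijective, hence so is $\psi_N$. Finally, $\varphi_N(b)$ is the sum of the block idempotents $b_N$ with $\psi_N(b_N)=b$, so it is a single block idempotent.

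With this correction your argument is a valid and more elementary alternative to the paper's. The paper uses \cite[Lemma~4.5]{JohnRicardoBrauer1}: for each of the finitely many pairs of simple modules lying in a common block of $\bigO\db{G}$, it finds a finite quotient in which they already share a block, and intersects the resulting subgroups $N_{S,T}$. Your monotonicity argument needs only that normal $p$-subgroups of finite quotients act trivially on simple modules.
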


\begin{proof}
	The simple $\bigO\db{G}$-modules are in bijection with the simple $\bigO[G/N]$-modules by restriction along the quotient map, for any open normal pro-$p$ subgroup $N$ of $G$, so in particular $\bigO\db{G}$ has finitely many simple modules.

    Let $S$ and $T$ be simple $\bigO\db{G}$-modules lying in the same block of $\bigO\db{G}$. By \cite[Lemma~4.5]{JohnRicardoBrauer1}, there exists an open normal subgroup $N_{S,T}\unlhd G$, {which we may choose to be pro-$p$}, such that $S$ and $T$ lie in the same block of $\bigO[G/N_{S,T}]$. They will then also lie in the same block of $\bigO[G/N]$ for any $N\unlhd_O G$ with $N\subseteq N_{S,T}$. Conversely, if $S$ and $T$ lie in the same block of a continuous finite quotient of $G$, then they lie in the same block of $\bigO\db{G}$. 
    
    We define
    $$
        N_0= \bigcap_{S,T} N_{S,T},
    $$
    where $S$ and $T$ range over all pairs of simple $\bigO\db{G}$-modules lying in the same block of $\bigO\db{G}$. This means that for any $N\subseteq N_0$ with $N\unlhd_O G$, two simple $\bigO[G/N]$-modules lie in the same block if and only if they lie in the same block of $\bigO\db{G}$. Moreover, any simple $\bigO\db{G}$-module arises from a simple $\bigO[G/N]$-module by restriction. Hence $\bigO\db{G}$ and $\bigO[G/N]$ have the same number of blocks and the map $\varphi_N$ does not map any block idempotent to zero, so that $\varphi_N$ induces a bijection of block idempotents.
\end{proof}

\begin{theorem}\label{thm:normal-defect}
    Let $G$ be a virtually pro-$p$ group and let $\bigO \db{G}b$ be a block with normal defect group $D$. Assume that $C_G(D)=Z(D)A$ for a (necessarily finite) $p'$-group $A\unlhd G$. Then there is a finite $p'$-group $E$ acting faithfully on $D$ and a $2$-cocycle $\alpha\in Z^2(E,\bigO^\times)$ with values in the $p'$-roots of unity such that $\bigO\db{G}b$ is Morita equivalent to $\bigO_\alpha\db{D\rtimes E}$.
\end{theorem}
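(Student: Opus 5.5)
The plan is to mimic Külshammer's theorem for blocks with normal defect group, reducing via Fong--Reynolds to a block of a twisted group algebra of $G/A$, and then passing to a finite quotient by a pro-$p$ subgroup of $D$ via Proposition~\ref{prop:blockbijection}.

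First, $A$ is a finite normal $p'$-subgroup, so we pick a block $c$ of $\bigO[A]$ with $bc\neq 0$. Since $A$ is a $p'$-group, $\bigO[A]c$ has trivial defect group. Proposition~\ref{prop:fong1} replaces $G$ by $H=\Stab_G(c)$ and $b$ by $d$, with the same defect group $D$. We have $D\leq H$ because $D$ centralises $A$, so $D$ is still normal, and $C_H(D)=Z(D)A$. Theorem~\ref{thm:fong-reynolds} and the proof of Corollary~\ref{corol:fong-reynolds} then give a Morita equivalence between $\bigO\db{H}d$ and a block $\bigO_\alpha\db{H/A}d'$. Here $D\cong DA/A$ is a normal defect group of $d'$ as an $H/A$-algebra, and
$$C_{H/A}(DA/A)=Z(D)A/A,$$
since $A$ is a $p'$-group and $[C,D]\subseteq A\cap D=1$ for any preimage $C$ of the centraliser, so the centraliser lifts to $C_H(D)A$. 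In the twisted algebra $\bigO_\alpha\db{H/A}$ we now have a normal defect group $\bar D$ with $C(\bar D)=Z(\bar D)$. The cocycle is inflated from a finite quotient and takes $p'$-root-of-unity values. By Proposition~\ref{prop:centralextension} we may equally work with a block of a central $p'$-extension $L$, with normal pro-$p$ subgroup $\bar D$ and $C_L(\bar D)=Z(\bar D)\times Z$.

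Second, we identify the block. Since $\bar D$ is normal, a Sylow $p$-subgroup $P$ of $H/A$ contains $\bar D$. We claim $P=\bar D$. Indeed, $P$ acts on the block, and the standard argument that the defect group of a block contains $O_p$ shows $\bar D\supseteq O_p(H/A)$. Since $H/A$ is virtually pro-$p$, $\bar D$ is open. The quotient $P/\bar D$ acts on $\bar D$ by inner-looking automorphisms only through $\Aut$, and the Brauer correspondent argument (Brauer's first main theorem in the limit) shows the inertial quotient $E=H/A\big/\bar D C(\bar D)$ is a $p'$-group. With $C(\bar D)=Z(\bar D)\subseteq\bar D$, this gives $E=(H/A)/\bar D$, a finite $p'$-group acting faithfully on $D$. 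By Schur--Zassenhaus for profinite groups, $H/A\cong \bar D\rtimes E$.

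Third, we show the twisted group algebra over $D\rtimes E$ is already a single block, and that $\alpha$ can be taken inflated from $E$. Apply Proposition~\ref{prop:blockbijection} to obtain $N\unlhd_O (H/A)$ with $N\subseteq \bar D$ and $N$ normal, such that block idempotents of $\bigO_\alpha\db{\bar D\rtimes E}$ biject with those of the finite twisted algebra $\bigO_\alpha[(\bar D/N)\rtimes E]$; we choose $N$ so that $\alpha$ is inflated from that quotient. Since $\bar D/N$ is a normal $p$-subgroup, it acts trivially on simples. A cocycle on a group whose normal Sylow $p$-subgroup has $p'$-valued cocycle restricts to a coboundary on the $p$-part and can be adjusted to be inflated from $E$; this is the standard argument in Külshammer's theorem, run at each finite level compatibly. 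The blocks of $\bigO_\alpha[(\bar D/N)\rtimes E]$ then correspond to blocks of $k_\alpha E$. Because $E$ acts faithfully, and using Külshammer's theorem for each finite quotient, the block $d'$ is the whole of $\bigO_{\alpha}\db{\bar D\rtimes E}$ up to Morita equivalence. Concretely, $d'$ is a block of $\bigO_\alpha[(\bar D/N)\rtimes E]$ with normal defect group, and Külshammer identifies it as Morita equivalent to $\bigO_{\alpha'}[(\bar D/N)\rtimes E]$ via a full idempotent. We must choose these full idempotents compatibly along the inverse system of $N$'s, and then take limits.

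The main obstacle is this last step: making the finite Külshammer Morita equivalences compatible in the inverse limit, with a fixed $E$ and a fixed cocycle $\alpha'$ independent of $N$. I would handle it by working instead with the source algebra or Brauer-correspondent description. I would take $\bar D\rtimes E$-stable data at the top level, namely the Fong--Reynolds cocycle on $E$, which is determined by the block of $C(\bar D)=Z(\bar D)$ and hence defined once and for all. I would then verify directly that the resulting idempotent of $\bigO\db{H}b$ is full, using Brumer's Nakayama lemma on simple modules as in the proof of Theorem~\ref{thm:fong-reynolds}.
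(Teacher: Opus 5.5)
The gap is your third step, which you leave open yourself. Your first two steps follow the paper's route. The paper first uses Proposition~\ref{prop:fong1} to make the block $e=1\otimes c$ of $DC_G(D)=D\times A$ stable. It then shows the inertial quotient is a $p'$-group using the finite Brauer First Main Theorem at an explicit level $N\le D$, chosen so that $C_G(D/N)D=C_G(D)D$, $\varphi_N(b)$ and $\varphi_N(e)$ are blocks (Proposition~\ref{prop:blockbijection}), and $D/N$ is a defect group. Your ``Brauer's first main theorem in the limit'' needs exactly these choices.

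Once you have the Fong--Reynolds isomorphism $\bigO\db{H}c\cong S\otimes_{\bigO}\bigO_\alpha\db{H/A}$ with $H/A\cong \bar D\rtimes E$, there is no further Morita equivalence to construct. Nothing needs to be made compatible along an inverse system, because Theorem~\ref{thm:fong-reynolds} already produces the isomorphism at the profinite level. What remains is one fact you never establish: $\bigO_\alpha\db{\bar D\rtimes E}$ is indecomposable. Then $d'=1$, and $\bigO\db{H}d$ is a matrix algebra over $\bigO_\alpha\db{\bar D\rtimes E}$.

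Several parts of your third step do not hold up:
\begin{enumerate}
\item Your claim that blocks of $\bigO_\alpha[(\bar D/N)\rtimes E]$ correspond to blocks of $k_\alpha E$ is false and points the wrong way. Only the simple modules correspond: for $p=3$, $kS_3$ is a single block while $kC_2$ has two.
\item Applying K\"ulshammer's theorem to $\bigO_\alpha[(\bar D/N)\rtimes E]d'$ is circular, since that algebra is already in the target form.
\item The closing plan via source algebras and Brumer's Nakayama lemma is not an argument.
\end{enumerate}

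The missing step is short. Choose an open normal subgroup $N\le \bar D$ small enough that the finite group $E$ still acts faithfully on $\bar D/N$; this is possible since $E$ is finite and acts faithfully on $\bar D$. Then $\bigO_\alpha[(\bar D/N)\rtimes E]$ is indecomposable by \cite[Theorem~6.14.1]{LinckelmannBookVol2}. Now suppose $f\neq 0,1$ were a central idempotent of $\bigO_\alpha\db{\bar D\rtimes E}$. For $N$ small enough, the images of $f$ and $1-f$ would both be non-zero at such a level. That gives a non-trivial central idempotent there, a contradiction, so the profinite algebra is indecomposable too.

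Your cohomological adjustment of $\alpha$ ``at each finite level'' is also unnecessary. Since $A$ is finite, the last clause of Theorem~\ref{thm:fong-reynolds} lets you take $L=\bar D$ (equivalently $D$ upstairs), which is open, normal and meets $A$ trivially. Then $\alpha$ is a cocycle on $H/DA=E$ from the outset.
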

\begin{proof}
    Pick a block idempotent $e\in Z(\bigO\db{DC_G(D)})$ such that $be\neq 0$. 
    By Proposition~\ref{prop:fong1} the block $\bigO\db{G}b$ is Morita equivalent to a block of the stabiliser of $e$ in $G$. We can therefore assume without loss of generality that $e$ is $G$-stable.

    We will first show that the inertial quotient of $\bigO \db{G}b$, that is, $G/DC_G(D)$, is equal to the inertial quotient of the block of a finite quotient. In particular, it will be a finite $p'$-group. 
    If $N$ is an open normal subgroup of $G$ contained in $D$, we
    write $C_G(D/N)$ for the preimage in $G$ of $C_{G/N}(D/N)$. Since $C_G(D)$ is closed we get $$C_G(D/N)D=C_G(D)D$$ for some sufficiently small $N$.
    By Proposition~\ref{prop:blockbijection} we can, in addition to the above, choose $N$ small enough that $\varphi_N(b)\in Z(\bigO[G/N])$ and $\varphi_N(e)\in Z(\bigO[DC_G(D)/N])$ are both block idempotents, which we denote $b_N$ and $e_N$. Choosing $N$ smaller if needed we can furthermore assume that $D/N$ is a defect group of $\bigO[G/N]b_N$.

    Note that $be\neq 0$ implies $b_N e_N\neq 0$. Furthermore, $\Stab_G(e_N)=\Stab_G(e)=G$ is also clear by Proposition~\ref{prop:blockbijection}. We can now apply Brauer's First Main Theorem \cite[Theorem~6.7.6~(v)]{LinckelmannBookVol2} (combined with \cite[Corollary~6.2.7]{LinckelmannBookVol2}) to conclude that $E=G/DC_G(D)$ is a $p'$-group.

    Since $C_G(D)=Z(D)A$ for a finite $p'$-group $A\unlhd G$, we have that $G/A \cong D\rtimes E$ by the profinite Schur-Zassenhaus theorem. We have $DC_G(D) = DZ(D)A = D\times A$, and hence $e = 1_{\bigO\db{D}}\otimes c$ for a block idempotent $c$ of $\bigO A$, which is thus $G$-stable, of trivial defect and such that $bc\neq 0$. 
    
    We are in a position to apply Theorem~\ref{thm:fong-reynolds} with the $N$ from the statement being our $A$.  Since $A$ is finite and $D\cap A = 1$, we can take $L=D$.  Thus, $\bigO \db{G}b$ is Morita equivalent to a block of $\bigO_\alpha \db{G/A}$ for a $2$-cocycle $\alpha \in Z^2(G/DA, \bigO^\times) = Z^2(E,\bigO^\times)$ with values in the $p'$-roots of unity. It follows from \cite[Theorem~6.14.1]{LinckelmannBookVol2} that $\bigO_{\alpha}[(D\rtimes E)/N]$ is indecomposable as an $\bigO$-algebra, where $N$ is as above ($E$ continues to act faithfully on $D/N$ because $C_G(D/N)D = C_G(D)D$). But then $\bigO_{\alpha}\db{D\rtimes E}$ is indecomposable too, which means that $\bigO \db{G}b$ is Morita equivalent to $\bigO_\alpha\db{D\rtimes E}$ itself, rather than just a block thereof.
\end{proof}

The above theorem is a version for virtually pro-$p$ groups of part of \cite[Theorem~6.14.1]{LinckelmannBookVol2}.  It would be interesting to prove the full analogue of \cite[Theorem~6.14.1]{LinckelmannBookVol2} -- this would require much of the theory of interior algebras and source algebras to be generalised to the profinite setting (which could be a lucrative pursuit in other contexts as well). It would also be interesting to see if there is a way to generalise Theorem \ref{thm:normal-defect} from virtually pro-$p$ groups to arbitrary profinite groups.

Given a finite group $E$, in the following we identify $H^2(E, k^\times)$ with the subgroup of $H^2(E, \bigO^\times)$ consisting of classes of $2$-cocycles taking values in the $p'$-roots of unity. 

\begin{theorem}\label{thm:reduction}
    Let $G$ be a profinite group and let $\bigO \db{G}b$ be a block with defect group isomorphic to $\Z_p^n$ for some $n\in \N$. Then there is a finite $p'$-group $E$ acting faithfully on $\Z_p^n$ and an $\alpha\in H^2(E,k^\times)$ such that $\bigO\db{G}b$ is Morita equivalent to $\bigO_\alpha\db{\Z_p^n\rtimes E}$.
\end{theorem}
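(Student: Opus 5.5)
The plan is to reduce to a virtually pro-$p$ group and then induct on a finite invariant, using Lemma~\ref{lemma:group-theory-caseAB} to decide between a Fong-type reduction and the normal-defect case.

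\textbf{Step 1: reduction to the virtually pro-$p$ setting.} Since $D\cong\Z_p^n$ is topologically finitely generated, Theorem~\ref{theorem:reduction_to_virtually_pro_p} replaces $\bigO\db{G}b$ by an isomorphic block of a virtually pro-$p$ quotient, with defect group still isomorphic to $\Z_p^n$. So we may assume $G$ is virtually pro-$p$ and $D\leq G$ is a defect group. Then $D$ is open in a Sylow $p$-subgroup, hence open in $G$.

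\textbf{Step 2: normalising $O_p(G)$.} Every normal $p$-subgroup lies in every defect group. I expect the profinite analogue of this finite fact to follow from Proposition~\ref{prop:defectgroupintersection} applied with $N=O_p(G)$, whose blocks have defect group $O_p(G)$ itself. Hence $O_p(G)\subseteq D$, and the hypotheses of Lemma~\ref{lemma:group-theory-caseAB} hold.

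\textbf{Step 3: induction.} We argue by induction on the finite number $m(G)=|G/O_p(G)O_{p'}(Z(G))|$.

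In case (B), $D\unlhd G$ and $C_G(D)=D\,O_{p'}(Z(G))$. Since $D$ is abelian, $Z(D)=D$, and $A=O_{p'}(Z(G))$ is a finite normal $p'$-group. Theorem~\ref{thm:normal-defect} then gives the conclusion directly, with $E$ acting faithfully on $D\cong\Z_p^n$ and $\alpha$ taking values in the $p'$-roots of unity, i.e.\ $\alpha\in H^2(E,k^\times)$.

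In case (A), we have a finite $N\unlhd G$ with $N\cap D^g=1$ for all $g$ and $N\supsetneqq O_{p'}(Z(G))$. Choose a block $c$ of $\bigO N$ with $bc\neq0$.
\begin{enumerate}
\item By Proposition~\ref{prop:fong1}, pass to the stabiliser $H$ of $c$. This is open in $G$, hence still virtually pro-$p$, and the defect group is preserved.
\item By Proposition~\ref{prop:defectgroupintersection}, the defect group of $\bigO Nc$ is $D'\cap N=1$ for a suitable defect group $D'$. So $c$ has trivial defect and is $H$-stable.
\item Corollary~\ref{corol:fong-reynolds} then gives a Morita equivalent block of a central extension $L$ of $H/N$ by a finite $p'$-group, with defect group isomorphic to $\Z_p^n$. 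The group $L$ is again virtually pro-$p$.
\end{enumerate}

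\textbf{Main obstacle: showing the invariant drops.} Remark~\ref{rem:GbyOpOpprimeDecreases} gives $m(L)<m(H)$, provided $O_{p'}(Z(H))\subsetneqq N$ and $N\cap O_p(H)=1$. The second condition holds because $N\cap D=1$ and $O_p(H)\subseteq D$. The first condition and the comparison $m(H)\le m(G)$ after the Fong step are the delicate part.

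If $H=G$, the first condition follows from (A)(b). If $H<G$, I would instead compare via $|H/O_p(H)N|\le|G/O_p(G)N|$. This needs $O_p(G)\cap H\subseteq O_p(H)$, which holds since $O_p(G)\cap H$ is a normal pro-$p$ subgroup of $H$. It also needs $N\supsetneqq O_{p'}(Z(G))$, and I would use the remark's middle inequality $|G/O_p(G)N|<m(G)$ directly in place of $m(H)$.

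If the bookkeeping fails, the fallback is to apply the Fong and Fong–Reynolds steps to $G$ in a single combined step, working with $G$ rather than $H$. The induction terminates because $m\ge1$, so eventually case (B) occurs.
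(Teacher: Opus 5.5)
Your proposal is correct and follows the paper's proof. You reduce to the virtually pro-$p$ case via Theorem~\ref{theorem:reduction_to_virtually_pro_p} and induct on $|G/O_p(G)O_{p'}(Z(G))|$, which the paper phrases as a minimal counterexample. You rule out Case~(A) of Lemma~\ref{lemma:group-theory-caseAB} using Proposition~\ref{prop:fong1}, Corollary~\ref{corol:fong-reynolds} and Remark~\ref{rem:GbyOpOpprimeDecreases}, and you finish Case~(B) with Theorem~\ref{thm:normal-defect}.

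The only difference is bookkeeping in Case~(A). The paper uses minimality to force $H=\Stab_G(c)=G$ before applying Fong--Reynolds; this works because $H\supseteq O_p(G)O_{p'}(Z(G))$ makes the invariant strictly smaller when $H\neq G$. Your chain $|L/O_p(L)O_{p'}(Z(L))|\le|H/O_p(H)N|\le|G/O_p(G)N|<|G/O_p(G)O_{p'}(Z(G))|$ handles $H<G$ directly and is equally valid. The first inequality holds without the Remark's extra hypotheses, so your fallback is not needed. Your explicit derivation of $O_p(G)\subseteq D$ from Proposition~\ref{prop:defectgroupintersection} supplies a step the paper leaves implicit.
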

\begin{proof}
    By Theorem~\ref{theorem:reduction_to_virtually_pro_p} we can assume that $G$ is virtually pro-$p$. We further assume that among the virtually pro-$p$ groups having a block Morita equivalent to $\bigO\db{G}b$ with defect group $\Z_p^n$, $G$ is chosen with $|G/O_p(G)O_{p'}(Z(G))|$ as small as possible.
    
    We can now apply Lemma~\ref{lemma:group-theory-caseAB} and we claim that Case (A) cannot occur: if it did, then let $N$ be the given finite normal subgroup.  We have a block idempotent $c\in \bigO[N]$ with $bc\neq 0$, which has trivial defect group by Proposition~\ref{prop:defectgroupintersection}.  We claim $c$ is $G$-stable: By Proposition~\ref{prop:fong1}, the block $\bigO\db{G}b$ is Morita equivalent to a block $\bigO\db{H}d$ with defect group $\Z_p^n$, where $H=\Stab_G(c)$ and $bd\neq 0$. Note that $H$ contains a defect group of $\bigO\db{G}b$ (which contains $O_p(G)$) and $N$ (which contains $O_{p'}(Z(G))$), so $H\supseteq O_p(G)O_{p'}(Z(G))$. This implies $|H/O_p(H)O_{p'}(Z(H))| \leq |H/O_p(G)O_{p'}(Z(G))| \leq |G/O_p(G)O_{p'}(Z(G))|$, and the second inequality is strict if $H\neq G$. So by our minimality assumption, $H=G$, as claimed.  
     Now Corollary~\ref{corol:fong-reynolds} yields a virtually pro-$p$ group $L$ with (by Remark \ref{rem:GbyOpOpprimeDecreases}) $|L/O_p(L)O_{p'}(Z(L))| < |G/O_p(G)O_{p'}(Z(G))|$, and a block $\bigO\db{L} e$ Morita equivalent to $\bigO\db{G}b$ with the same defect group, contradicting our minimality assumption.
    
    We are thus in Case (B), and Theorem~\ref{thm:normal-defect} implies the assertion.
\end{proof}

\begin{corol}\label{corol:DonovanforZpn}
    There are only finitely many Morita equivalence classes of blocks with defect group isomorphic to $\Z_p^n$, for any given $n$.
\end{corol}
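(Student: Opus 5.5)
By Theorem~\ref{thm:reduction}, every block with defect group $\Z_p^n$ is Morita equivalent to some $\bigO_\alpha\db{\Z_p^n\rtimes E}$, where $E$ is a finite $p'$-group acting faithfully on $\Z_p^n$ and $\alpha\in H^2(E,k^\times)$. It therefore suffices to prove two things. First, up to isomorphism of the resulting algebra, there are only finitely many pairs $(E,\alpha)$. Second, the isomorphism type of $\bigO_\alpha\db{\Z_p^n\rtimes E}$ depends only on the conjugacy class of $E$ in $\Aut(\Z_p^n)=\GL_n(\Z_p)$ and on the cohomology class of $\alpha$.

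For the first point, a faithful action of $E$ on $\Z_p^n$ is an embedding $E\hookrightarrow \GL_n(\Z_p)$. The kernel $K$ of reduction $\GL_n(\Z_p)\to\GL_n(\F_p)$ is pro-$p$ (or pro-$2$ with $\GL_n(\Z_2)$'s level-$4$ kernel in case $p=2$; the level-$p$ kernel is pro-$p$ in all cases). Hence any finite $p'$-subgroup $E$ meets $K$ trivially and maps isomorphically onto a $p'$-subgroup of $\GL_n(\F_p)$. This gives $|E|\le|\GL_n(\F_p)|$, so there are only finitely many abstract groups $E$. To count conjugacy classes of embeddings, I would use the profinite Schur--Zassenhaus theorem in $K\rtimes E$, or equivalently the vanishing of $H^1(E,K)$ for the pro-$p$ group $K$, done via the filtration by congruence subgroups with abelian $p$-group quotients. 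This shows that two $p'$-subgroups of $\GL_n(\Z_p)$ with conjugate images in $\GL_n(\F_p)$ are conjugate in $\GL_n(\Z_p)$. Since $\GL_n(\F_p)$ is finite, there are only finitely many conjugacy classes of such $E$. For each $E$, the group $H^2(E,k^\times)$ is finite, because it is the $p'$-part of the Schur multiplier. So finitely many $\alpha$ occur.

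For the second point, conjugating $E$ by $\theta\in\GL_n(\Z_p)$ gives an isomorphism $\Z_p^n\rtimes E\cong \Z_p^n\rtimes {}^\theta E$, and this induces an isomorphism of completed twisted group algebras with the transported cocycle. Changing $\alpha$ by a coboundary $\partial\mu$ gives an isomorphism via $g\mapsto\mu(g)g$. Both facts hold at each finite level $\bigO_\alpha[(\Z_p^n\rtimes E)/M]$ compatibly, so they pass to the inverse limit. The cocycle lives on $E$, inflated to the semidirect product, so these isomorphisms are compatible with the inflation.

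I expect the only delicate step to be the conjugacy claim: that $p'$-subgroups of $\GL_n(\Z_p)$ are determined up to conjugacy by their reductions mod $p$. I would prove it by applying Schur--Zassenhaus to the preimage in $\GL_n(\Z_p)$ of the image subgroup. This preimage is an extension of the finite $p'$-group by the pro-$p$ group $K$, and all complements in it are $K$-conjugate. Alternatively, one can avoid this entirely: $\Z_p^n\rtimes E$ is determined by a $\Z_p E$-lattice structure of rank $n$, and $\Z_pE$-lattices are projective, hence determined by their reductions mod $p$. There are only finitely many $\F_pE$-modules of dimension $n$, which gives the finiteness directly.
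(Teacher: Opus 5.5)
Your proposal is correct and follows the paper's proof. Like the paper, you bound $E$ by embedding it in $\GL_n(\F_p)$ via reduction mod $p$, show there are only finitely many possible actions, and use that $H^2(E,k^\times)$ is finite; your alternative argument that there are only finitely many rank-$n$ $\Z_pE$-lattices is exactly the paper's count of actions, and your Schur--Zassenhaus conjugacy argument is an equally valid substitute, while your check that $\bigO_\alpha\db{\Z_p^n\rtimes E}$ depends only on the conjugacy class of $E$ and the cohomology class of $\alpha$ supplies a detail the paper leaves implicit.
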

\begin{proof}
    There are only finitely many possibilities for $E$ in Theorem~\ref{thm:reduction}. This is due to the fact that any such $E$ is isomorphic to a subgroup of $\GL_n(\mathbb F_p)$, of which there are only finitely many, and an action of $E$ on $\Z_p^n$ corresponds to a $\Z_pE$-lattice of rank $n$, of which there are again only finitely many up to isomorphism. For any such $E$ the group $H^2(E,k^\times)$ is finite, so the result follows.
\end{proof}

\section*{Acknowledgements}

We would like to thank Pavel Zalesski and Henrique Souza for helpful discussions. The first author was supported by EPSRC grant UKRI2780 and FAPEMIG Universal Grant APQ-03491-25. The second author was supported by CNPq Produtividade 1D Grant 303667/2022-2, CNPq Universal Grant 317682/2025-3 and FAPEMIG Universal Grant APQ-03491-25.

\bibliographystyle{abbrv}
\bibliography{refpap}

@article {JohnRicardoCyclic,
    AUTHOR = {Franquiz Flores, R. J. and MacQuarrie, J. W.},
     TITLE = {Blocks of profinite groups with cyclic defect group},
   JOURNAL = {Bull. Lond. Math. Soc.},
  FJOURNAL = {Bulletin of the London Mathematical Society},
    VOLUME = {54},
      YEAR = {2022},
    NUMBER = {5},
     PAGES = {1595--1608},
      ISSN = {0024-6093},
   MRCLASS = {20C20 (20E18)},
  MRNUMBER = {4499593},
}

@article {JohnRicardoBrauer1,
    AUTHOR = {Franquiz Flores, Ricardo J. and MacQuarrie, John W.},
     TITLE = {Block theory and {B}rauer's first main theorem for profinite
              groups},
   JOURNAL = {Adv. Math.},
  FJOURNAL = {Advances in Mathematics},
    VOLUME = {397},
      YEAR = {2022},
     PAGES = {Paper No. 108121, 31},
      ISSN = {0001-8708},
   MRCLASS = {20C20 (20E18)},
  MRNUMBER = {4377937},
       DOI = {10.1016/j.aim.2021.108121},
       URL = {https://doi.org/10.1016/j.aim.2021.108121},
}

@article {JohnPavelPeterInfGen,
    AUTHOR = {MacQuarrie, John W. and Symonds, Peter and Zalesskii, Pavel
              A.},
     TITLE = {Infinitely generated pseudocompact modules for finite groups
              and {W}eiss' theorem},
   JOURNAL = {Adv. Math.},
  FJOURNAL = {Advances in Mathematics},
    VOLUME = {361},
      YEAR = {2020},
     PAGES = {106925, 35},
      ISSN = {0001-8708},
   MRCLASS = {20C12 (16G30)},
  MRNUMBER = {4038145},
MRREVIEWER = {Nadia P. Mazza},
       DOI = {10.1016/j.aim.2019.106925},
       URL = {https://doi.org/10.1016/j.aim.2019.106925},
}

@book {LinckelmannBookVol2,
    AUTHOR = {Linckelmann, Markus},
     TITLE = {The block theory of finite group algebras. {V}ol. {II}},
    SERIES = {London Mathematical Society Student Texts},
    VOLUME = {92},
 PUBLISHER = {Cambridge University Press, Cambridge},
      YEAR = {2018},
     PAGES = {xi+510},
      ISBN = {978-1-108-42590-2; 978-1-108-44180-3; 978-1-108-44190-2},
   MRCLASS = {20C20 (16-02 16D90 20-02)},
  MRNUMBER = {3821517},
MRREVIEWER = {Burkhard K\"{u}lshammer},
}

@book {LinckelmannBookVol1,
    AUTHOR = {Linckelmann, Markus},
     TITLE = {The block theory of finite group algebras. {V}ol. {I}},
    SERIES = {London Mathematical Society Student Texts},
    VOLUME = {91},
 PUBLISHER = {Cambridge University Press, Cambridge},
      YEAR = {2018},
     PAGES = {x+515},
      ISBN = {978-1-108-42591-9; 978-1-108-44183-4; 978-1-108-44190-2},
   MRCLASS = {20C20 (16-02 16S34 16T05 20-02)},
  MRNUMBER = {3821516},
MRREVIEWER = {Burkhard K\"{u}lshammer},
}

@article{IusenkoMacQuarrieSemisimple,
author = {Iusenko, Kostiantyn and MacQuarrie, John William},
title = {Semisimplicity and separability for pseudocompact algebras},
journal = {J.\ Algebra Appl.},
volume = {24},
number = {03},
pages = {2550078},
year = {2025},
doi = {10.1142/S0219498825500781},
}

@article {GabrielCategoriesAbeliennes,
    AUTHOR = {Gabriel, Pierre},
     TITLE = {Des cat\'{e}gories ab\'{e}liennes},
   JOURNAL = {Bull. Soc. Math. France},
  FJOURNAL = {Bulletin de la Soci\'{e}t\'{e} Math\'{e}matique de France},
    VOLUME = {90},
      YEAR = {1962},
     PAGES = {323--448},
      ISSN = {0037-9484},
   MRCLASS = {18.20},
  MRNUMBER = {232821},
MRREVIEWER = {T.-Y. Lam},
       URL = {http://www.numdam.org/item?id=BSMF_1962__90__323_0},
}

@book {RibesZalesskiiProfiniteGroups,
    AUTHOR = {Ribes, Luis and Zalesskii, Pavel},
     TITLE = {Profinite groups},
    SERIES = {Ergebnisse der Mathematik und ihrer Grenzgebiete. 3. Folge. A
              Series of Modern Surveys in Mathematics [Results in
              Mathematics and Related Areas. 3rd Series. A Series of Modern
              Surveys in Mathematics]},
    VOLUME = {40},
 PUBLISHER = {Springer-Verlag, Berlin},
      YEAR = {2000},
     PAGES = {xiv+435},
      ISBN = {3-540-66986-8},
   MRCLASS = {20E18},
  MRNUMBER = {1775104},
MRREVIEWER = {Marcus du Sautoy},
       DOI = {10.1007/978-3-662-04097-3},
       URL = {https://doi.org/10.1007/978-3-662-04097-3},
}

@article {JohnPeterBrauerTheory,
    AUTHOR = {MacQuarrie, John W. and Symonds, Peter},
     TITLE = {Brauer theory for profinite groups},
   JOURNAL = {J. Algebra},
  FJOURNAL = {Journal of Algebra},
    VOLUME = {398},
      YEAR = {2014},
     PAGES = {496--508},
      ISSN = {0021-8693},
   MRCLASS = {20C20 (20E18)},
  MRNUMBER = {3123781},
MRREVIEWER = {Nadia P. Mazza},
       DOI = {10.1016/j.jalgebra.2013.09.004},
       URL = {https://doi.org/10.1016/j.jalgebra.2013.09.004},
}

@article{IusenkoMacQuarrieExtensions,
  author    = {Iusenko, Kostiantyn and MacQuarrie, John William},
  title     = {Homological properties of extensions of algebras},
  JOURNAL = {Math. Z.},
  volume    = {309},
  number    = {1},
  pages     = {55},
  year      = {2025},
  publisher = {Springer},
  doi       = {10.1007/s00209-025-03684-z},
  url       = {https://link.springer.com/article/10.1007/s00209-025-03684-z}
}

@misc{ColinReidThesis,
      title={Finiteness properties of profinite groups}, 
      author={Colin Reid},
      year={2010},
      eprint={1002.2935},
      archivePrefix={arXiv},
      primaryClass={math.GR},
      url={https://arxiv.org/abs/1002.2935}, 
      note={{PhD thesis}}
}

@article {Brumer,
    AUTHOR = {Brumer, Armand},
     TITLE = {Pseudocompact algebras, profinite groups and class formations},
   JOURNAL = {J. Algebra},
  FJOURNAL = {Journal of Algebra},
    VOLUME = {4},
      YEAR = {1966},
     PAGES = {442--470},
      ISSN = {0021-8693},
   MRCLASS = {18.20},
  MRNUMBER = {202790},
MRREVIEWER = {H.\ Bass},
       DOI = {10.1016/0021-8693(66)90034-2},
       URL = {https://doi-org.manchester.idm.oclc.org/10.1016/0021-8693(66)90034-2},
}

@incollection {KrauseVossieck,
    AUTHOR = {Krause, Henning and Vossieck, Dieter},
     TITLE = {Length categories of infinite height},
 BOOKTITLE = {Geometric and topological aspects of the representation theory
              of finite groups},
    SERIES = {Springer Proc. Math. Stat.},
    VOLUME = {242},
     PAGES = {213--234},
 PUBLISHER = {Springer, Cham},
      YEAR = {2018},
      ISBN = {978-3-319-94033-5; 978-3-319-94032-8},
   MRCLASS = {18E10},
  MRNUMBER = {3901161},
MRREVIEWER = {Gustavo\ Jasso},
       DOI = {10.1007/978-3-319-94033-5\_8},
       URL = {https://doi-org.manchester.idm.oclc.org/10.1007/978-3-319-94033-5_8},
}

@article {Bongartz,
    AUTHOR = {Bongartz, Klaus},
     TITLE = {Algebras and quadratic forms},
   JOURNAL = {J. London Math. Soc. (2)},
  FJOURNAL = {Journal of the London Mathematical Society. Second Series},
    VOLUME = {28},
      YEAR = {1983},
    NUMBER = {3},
     PAGES = {461--469},
      ISSN = {0024-6107,1469-7750},
   MRCLASS = {16A64 (16A46)},
  MRNUMBER = {724715},
MRREVIEWER = {Sheila\ Brenner},
       DOI = {10.1112/jlms/s2-28.3.461},
       URL = {https://doi-org.manchester.idm.oclc.org/10.1112/jlms/s2-28.3.461},
}

@book {SerreGaloisCohomology,
    AUTHOR = {Serre, Jean-Pierre},
     TITLE = {Galois cohomology},
    SERIES = {Springer Monographs in Mathematics},
   EDITION = {English},
      NOTE = {Translated from the French by Patrick Ion and revised by the
              author},
 PUBLISHER = {Springer-Verlag, Berlin},
      YEAR = {2002},
     PAGES = {x+210},
      ISBN = {3-540-42192-0},
   MRCLASS = {12G05 (11R34)},
  MRNUMBER = {1867431},
}

@book {WilsonProfiniteGroups,
    AUTHOR = {Wilson, John S.},
     TITLE = {Profinite groups},
    SERIES = {London Mathematical Society Monographs. New Series},
    VOLUME = {19},
 PUBLISHER = {The Clarendon Press, Oxford University Press, New York},
      YEAR = {1998},
     PAGES = {xii+284},
      ISBN = {0-19-850082-3},
   MRCLASS = {20E18},
  MRNUMBER = {1691054},
MRREVIEWER = {Alexander\ Lubotzky},
}

@book{isaacs2008finite,
  title={Finite Group Theory},
  author={Isaacs, I. Martin},
  series={Graduate Studies in Mathematics},
  volume={92},
  year={2008},
  publisher={American Mathematical Society},
  address={Providence, RI},
  isbn={978-0-8218-4344-4}
}

@article {CapraceReidEtAl,
    AUTHOR = {Caprace, Pierre-Emmanuel and Reid, Colin and Wesolek, Phillip},
     TITLE = {Approximating simple locally compact groups by their dense
              locally compact subgroups},
   JOURNAL = {Int. Math. Res. Not. IMRN},
  FJOURNAL = {International Mathematics Research Notices. IMRN},
      YEAR = {2021},
    NUMBER = {7},
     PAGES = {5037--5110},
      ISSN = {1073-7928,1687-0247},
   MRCLASS = {22D05},
  MRNUMBER = {4241124},
MRREVIEWER = {Bruno\ Duchesne},
       DOI = {10.1093/imrn/rny298},
       URL = {https://doi-org.manchester.idm.oclc.org/10.1093/imrn/rny298},
}

@misc{ProfusionPaper,
      title={Block pro-fusion systems for profinite groups and blocks with infinite dihedral defect groups}, 
      author={Florian Eisele and Ricardo J. Franquiz Flores and John W. MacQuarrie},
      year={2025},
      eprint={2504.09286},
      archivePrefix={arXiv},
      primaryClass={math.RT},
      url={https://arxiv.org/abs/2504.09286}, 
}

@article {ChinPathCoalgebras,
    AUTHOR = {Chin, William},
     TITLE = {Hereditary and path coalgebras},
   JOURNAL = {Comm. Algebra},
  FJOURNAL = {Communications in Algebra},
    VOLUME = {30},
      YEAR = {2002},
    NUMBER = {4},
     PAGES = {1829--1831},
      ISSN = {0092-7872,1532-4125},
   MRCLASS = {16W30},
  MRNUMBER = {1894047},
       DOI = {10.1081/AGB-120013219},
       URL = {https://doi-org.manchester.idm.oclc.org/10.1081/AGB-120013219},
}

@article{JohnGreenCorr,
	author = {MacQuarrie, John W.},
	title = {Green Correspondence for Virtually Pro-$p$ Groups},
	JOURNAL = {J. Algebra},
	volume = {323},
	year = {2010},
	pages = {2203-2208},
	doi = {10.1016/j.jalgebra.2010.02.011}
}

@article{JohnModReps,
	author={MacQuarrie, John W.},
	title={Modular representations of profinite groups},
	journal={J.\ Pure Appl.\ Algebra},
	volume={215},
	year={2011},
	number={5},
	pages={753--763},
	DOI={10.1016/j.jpaa.2010.06.022}
}

@article{EEL,
    AUTHOR = {Eaton, C.W. and Eisele, F. and Livesey, M.},
     TITLE = {Donovan's conjecture, blocks with abelian defect groups and
              discrete valuation rings},
   JOURNAL = {Math. Z.},
  FJOURNAL = {Mathematische Zeitschrift},
    VOLUME = {295},
      YEAR = {2020},
    NUMBER = {1-2},
     PAGES = {249--264},
      ISSN = {0025-5874},
   MRCLASS = {20C20 (13A18)},
  MRNUMBER = {4100018},
MRREVIEWER = {Gabriel Navarro},
       DOI = {10.1007/s00209-019-02354-1},
       URL = {https://0-doi-org.wam.city.ac.uk/10.1007/s00209-019-02354-1},
}

@article{LinckelmannV4,
title = {The Source Algebras of Blocks with a {K}lein Four Defect Group},
JOURNAL = {J. Algebra},
volume = {167},
number = {3},
pages = {821-854},
year = {1994},
issn = {0021-8693},
doi = {https://doi.org/10.1006/jabr.1994.1214},
url = {https://www.sciencedirect.com/science/article/pii/S0021869384712142},
author = {M. Linckelmann}
}

@article {ErdmannV4,
    AUTHOR = {Erdmann, Karin},
     TITLE = {Blocks whose defect groups are {K}lein four groups: a
              correction},
   JOURNAL = {J. Algebra},
  FJOURNAL = {Journal of Algebra},
    VOLUME = {76},
      YEAR = {1982},
    NUMBER = {2},
     PAGES = {505--518},
      ISSN = {0021-8693},
   MRCLASS = {20C20},
  MRNUMBER = {661869},
MRREVIEWER = {L.\ Dornhoff},
       DOI = {10.1016/0021-8693(82)90228-9},
       URL = {https://doi-org.manchester.idm.oclc.org/10.1016/0021-8693(82)90228-9},
}

@article {KuelshammerpSolvable,
    AUTHOR = {K\"ulshammer, Burkhard},
     TITLE = {On {$p$}-blocks of {$p$}-solvable groups},
   JOURNAL = {Comm. Algebra},
  FJOURNAL = {Communications in Algebra},
    VOLUME = {9},
      YEAR = {1981},
    NUMBER = {17},
     PAGES = {1763--1785},
      ISSN = {0092-7872,1532-4125},
   MRCLASS = {20C20},
  MRNUMBER = {631888},
MRREVIEWER = {Karin\ Erdmann},
       DOI = {10.1080/00927878108822682},
       URL = {https://doi-org.manchester.idm.oclc.org/10.1080/00927878108822682},
}

@unpublished{JohnPeterExtendedBrauerUnfinished,
  author    = {MacQuarrie, John W. and Symonds, Peter},
  title     = {Blocks of profinite group algebras and the extended {B}rauer correspondence},
  note      = {Manuscript in preparation},
  year      = {2026}
}

\end{document}